%% file: WeakMixing.tex
\documentclass[english]{amsart}
\usepackage[T1]{fontenc}
\usepackage[utf8]{inputenc}
\usepackage{microtype}
\usepackage{amsmath}
\usepackage{amssymb}
\usepackage{amsthm}
\usepackage{mathrsfs}
\usepackage{mathtools}
\usepackage{mathdots}
\usepackage{nicematrix}
\usepackage[all]{xy}
\usepackage{tikz}
\usepackage{caption}
\usepackage{subfig}
\usepackage{import}
\usepackage{graphicx}
\usetikzlibrary{automata,positioning,calc,intersections,through,backgrounds,patterns,fit,external}
\usepackage[style=alphabetic, maxnames=5, backend=biber,sorting=nyt]{biblatex}
	\renewbibmacro{in:}{\ifentrytype{article}{}{\printtext{\bibstring{in}\intitlepunct}}}
\usepackage[colorlinks]{hyperref}
	\hypersetup{hidelinks}		% links are in plain black
\usepackage{bookmark}
\usepackage[noabbrev, nameinlink]{cleveref}

\theoremstyle{plain}
\newtheorem{theorem}{Theorem}
\newtheorem{lemma}[theorem]{Lemma}
\newtheorem{proposition}[theorem]{Proposition}
\newtheorem{corollary}[theorem]{Corollary}
\newtheorem{conjecture}[theorem]{Conjecture}
\theoremstyle{definition}
\newtheorem{definition}[theorem]{Definition}
\theoremstyle{remark}
\newtheorem*{remark}{Remark}

\AddToHook{env/lemma/begin}{\crefalias{theorem}{lemma}}
\AddToHook{env/corollary/begin}{\crefalias{theorem}{corollary}}
\AddToHook{env/definition/begin}{\crefalias{theorem}{definition}}

\crefname{theorem}{Theorem}{Theorems}
\crefname{lemma}{lemma}{lemmas}
\crefname{proposition}{Proposition}{propositions}
\crefname{corollary}{Corollary}{corollaries}
\crefname{definition}{Definition}{Definitions}

\newcommand{\numberset}{\mathbb}
\newcommand{\CC}{\numberset{C}}

\newcommand{\NN}{\numberset{N}}

\newcommand{\QQ}{\numberset{Q}}
\newcommand{\RR}{\numberset{R}}

\newcommand{\ZZ}{\numberset{Z}}

\newcommand{\cB}{\mathcal{B}}

\newcommand{\cT}{\mathcal{T}}

\renewcommand{\epsilon}{\varepsilon}

\newcommand{\alphabet}{\mathbf{A}} 
\newcommand{\balphabet}{\mathbf{B}} 
\newcommand{\rauzy}{\mathcal{R}}
\newcommand{\zorich}{\mathcal{Z}}

\newcommand{\uk}{\underline{k}}

\providecommand{\given}{}
\newcommand{\SetSymbol}[1][]{%
	\nonscript\:\mathord{:}
	\allowbreak
	\nonscript\:
	\mathopen{}}
\DeclarePairedDelimiterX{\Set}[1]\{\}{%
	\renewcommand{\given}{\SetSymbol[\delimsize]}
	#1
}

\DeclarePairedDelimiter{\abs}{\lvert}{\rvert}
\DeclarePairedDelimiterX{\norm}[1]\lVert\rVert{#1}
\DeclarePairedDelimiterXPP{\cnorm}[1]{}\lVert\rVert{_c}{#1}
\DeclarePairedDelimiterXPP{\lOnenorm}[1]{}\lVert\rVert{_1}{#1}
\DeclarePairedDelimiterX{\innprod}[2]{\langle}{\rangle}{#1,#2}

\newcommand*{\transpose}[1]{\prescript{\top}{}{\!\! #1}}%

\DeclareMathOperator{\GL}{GL}
\DeclareMathOperator{\SL}{SL}

\DeclareMathOperator{\Mat}{Mat}

\begin{document}

\title[Weak Mixing and Exceptional Spectral Properties for ITMs]{Typical Weak Mixing and Exceptional Spectral Properties for Interval Translation Mappings}
\date{March 19, 2026}

\author[M. Artigiani]{Mauro Artigiani}
\address{Departamento de Matemáticas\\
	Universidad Nacional de Colombia\\
	Carrera 30 No.\ 45-03\\
	111321\\
	Bogotá\\
	Colombia}
\email{martigiani@unal.edu.co}

\author[A. Avila]{Artur Avila}
\address{Institut für Mathematik\\
	Universität Zürich\\
	Winterthurerstrasse 190\\
	8057 Zürich\\
	Switzerland}
\address{IMPA\\
	Estrada D. Castorina 110\\
	Jardim Botânico\\
	22460-320 Rio de Janeiro\\
	Brazil}
\email{artur.avila@math.uzh.ch}

\author[S. Ferenczi]{Sébastien Ferenczi}
\address{Aix Marseille Université\\
	CNRS\\
	Centrale Marseille\\
	Institut de Mathématiques de Marseille\\
	I2m - Umr 7373\\
	13453 Marseille\\
	France}
\email{ssferenczi@gmail.com}

\author[P. Hubert]{Pascal Hubert}
\address{Institut de Mathématiques de Marseille\\
  39 rue F. Joliot-Curie\\
  13453 Marseille Cedex 20\\
  France}
\email{pascal.hubert@univ-amu.fr}

\author[A. Skripchenko]{Alexandra Skripchenko}
\address{Faculty of Mathematics\\
	National Research University Higher School of Economics\\
	Usacheva St. 6\\
	119048 Moscow\\
	Russia}
\email{sashaskrip@gmail.com}

\begin{abstract}
	We investigate weak mixing for some classes of interval translation
	mappings. We give two distinct proofs that a typical Bruin-Troubetzkoy
	interval translation mapping is weakly mixing. Moreover, we show that the
	second approach extends to other classes of interval translation mappings.
	In particular, we show that Bruin interval translation mappings on any
	number of intervals are typically weak mixing. Finally, we construct the
	first examples of non weak mixing Bruin-Troubetzkoy ITM of infinite type. 
\end{abstract}

\maketitle

\section{Introduction}
\subsection{ITMs}
Interval translation mappings (ITMs) were introduced by M.~Boshernitzan and
I.~Kornfeld in~\cite{BK} as a natural generalization of interval exchange
transformations (IETs). ITMs, as IETs, are piecewise continuous map of the
half-interval into itself which are translations on each subinterval of
continuity. The key difference between IETs and ITMs is the fact that in the
latter case the images of the intervals of continuity may overlap and therefore
some parts of the support interval are not covered by any image of the
continuity intervals, see \Cref{fig:BT_ITM} on \cpageref{fig:BT_ITM} for an
example. This observation leads to the following natural classification. Let $T$
be an ITM and $I = [0,1)$. For $n\in\NN$, $\Omega_n = I \cap TI \cap \cdots\cap
T^n I$ is the ($n$ level) \emph{attractor} of $T$. An ITM is of \emph{finite
type} if the sequence of attractors stabilizes, namely, there exists an
$N\in\NN$ such that $\Omega_k = \Omega_{k+1}$, for all $k\ge N$. An ITM is of
\emph{infinite type} if it is not of finite type. If there is no such $N$ and
$\overline{\Omega}$, the closure of the limit set $\Omega =I\cap TI\cap
T^2I\cdots$, is a Cantor set, then the ITM is of \emph{infinite type}, see
also~\cite{ST}.

Morally, ITMs of finite type can be reduced to IETs and therefore their ergodic
properties are quite well understood, see, e.g.,~\cite{Yoccoz:IEMs}. ITMs of
infinite type however represent a different type of dynamical systems, and their
ergodic properties appear to be non-trivial and attract constant attention in
the last years. 

In particular, the most ambitious open question in this context is the following 

\begin{conjecture}[Boshernitzan-Kornfeld,~\cite{BK}]\label{BKconjecture}
The set of parameters that give rise to ITM of infinite type with a given number
of intervals of continuity has zero Lebesgue measure. 
\end{conjecture}

While this conjecture is widely believed to be true, it has been shown only for
some special classes of ITMs: Bruin-Troubetzkoy (or BT, for short) ITMs, Bruin
ITMs, double rotations, and ITMs on $2$ and $3$ intervals of continuity,
see~\cite{BT, Br, SIA, BC, BK, Volk}. We will recall the definition of the first
two classes in \Cref{sec:BT_ITMs,sec:Bruin_ITMs} respectively. The best known
result in the full generality is due to Drach, Staresinic and Van Strien who
proved in~\cite{DSvS} a topological version of the conjecture: the complement of
the set of infinite type is open and dense. 

\subsection{Main results}
Ergodic properties of (typical) IETs are well known: they are uniquely
ergodic~\cite{Veech:ue, Masur:ue} and weakly mixing~\cite{AvilaForni}. Moreover,
it is known that special classes can have very different behaviors~\cite{ACFH}.
In contrast to this, not much is know about ergodic properties of ITMs of
infinite type. A subset of the authors, together with C.~Fougeron, showed that
double rotation are typically uniquely ergodic in~\cite{AFHS}. The same authors
proved the same result for BT ITMs in~\cite{AHS}. Finally, Bruin and Radinger
proved that a $G_{\delta}$ set of the parameters yields weakly mixing BT ITMs.

In~\cite{AHS}, it was conjectured that almost every BT ITM of infinite type was
weak mixing. Here, almost every refers to the natural probability measure
supported on the zero Lebesgue measure set of BT ITM of infinite type
constructed in~\cite{AHS}. 

In the current paper we prove this conjecture and hence improve the result by
Bruin and Radinger: 

\begin{theorem}\label{thm:wmbruintroubetzkoy}
Almost all Bruin-Troubetzkoy ITMs of infinite type are weakly mixing.
\end{theorem}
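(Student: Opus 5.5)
The plan is to follow the Avila--Forni strategy for weak mixing of IETs, adapted to the renormalization of BT ITMs from~\cite{AHS}. In~\cite{AHS} the set of BT ITMs of infinite type is parametrized by a Rauzy-type induction: the induced map on a suitable subinterval of the attractor is again a BT ITM, after a change of parameters. Iterating gives a cocycle of nonnegative integer matrices $B_n$ (the return-time or ``Rauzy'') cocycle over a renormalization dynamics, with a natural ergodic invariant probability measure $\mu$. The infinite-type minimal component then becomes, in the limit, a system whose Rokhlin towers have heights $h^{(n)} = B_n \cdots B_1 \cdot h^{(0)}$. Unique ergodicity, already proved in~\cite{AHS}, lets us treat the unique invariant measure as the limit of these tower structures.

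The first step is the Veech criterion. Suppose $e^{2\pi i t}$ is an eigenvalue with a measurable eigenfunction. Then, along a positive-measure set of renormalization times (a good return to a compact set where the towers have comparable widths and $B_n$ is positive), we get
\[
\operatorname{dist}\bigl(t\, h^{(n)}, \ZZ^{d}\bigr) \to 0 .
\]
This follows by the usual argument: an eigenfunction is almost constant on most of a tall tower level set, and these towers have bounded-distortion widths. Via the dynamics of the cocycle, this forces $t\,h^{(0)}$ to lie in the ``weak stable space'' of the cocycle $B_n$ acting on $\RR^d/\ZZ^d$.

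The second step is the core Avila--Forni probabilistic elimination. Consider the family of BT ITMs with the combinatorics fixed and varying the length parameters, together with the line $t h^{(0)}$ for $t\in\RR$. I need to show that for $\mu$-a.e.\ parameter the set of $t\neq 0$ with $t h^{(0)}$ in the weak stable space is empty. For this I would verify the two hypotheses of the Avila--Forni abstract theorem.

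\begin{itemize}
\item[(i)] The cocycle is \emph{uniformly expanding/bounded distortion}: it is a Markov-type map with a countable partition, uniform projective contraction on the positive cone, and integrable roof.
\item[(ii)] The cocycle is \emph{non-degenerate} in the sense that the ``central'' (neutral and stable) directions do not trap the line $\RR h^{(0)}$ in any fixed proper subspace. Concretely, the second Lyapunov exponent is positive, or more weakly there is no invariant rational subspace containing the heights direction.
\end{itemize}

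Given (i) and (ii), the Avila--Forni elimination argument applies. At each renormalization step, the set of $t$ for which $t h^{(n)}$ is close to $\ZZ^d$ shrinks by a definite factor with definite probability. A Borel--Cantelli argument over the parameter space then gives that the exceptional set has measure zero.

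The main obstacle I expect is verifying (ii) for the BT renormalization. Unlike for IETs, there is no symplectic form. Moreover, the renormalization on BT ITMs is essentially a two-parameter-with-drift (or low-dimensional) cocycle, so the space is small. One must exclude the possibility that the Lyapunov spectrum is $\{\lambda_1, 0, \dots\}$ with the neutral part rationally aligned with the height vector. I would first try to compute the matrices explicitly from the BT induction in~\cite{AHS} and exhibit two loops whose matrices generate a Zariski-dense (or at least strongly irreducible, with a hyperbolic non-top part) monoid. That would give a positive second exponent by Guivarc'h--Raugi/Furstenberg-type criteria. If the cocycle instead has only one positive exponent with a one-dimensional height space, I would instead argue directly. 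In that case the condition $t h^{(n)}\to\ZZ^d$ forces $t$ to be rational with respect to an eventually periodic structure. I would then exclude this by showing that arbitrarily long products contain blocks making $\operatorname{dist}(t h^{(n+k)},\ZZ^d)$ bounded below for each fixed $t\ne 0$. This is a direct ``first version'' of the elimination, essentially a Diophantine estimate on the continued-fraction-like expansion underlying BT ITMs.
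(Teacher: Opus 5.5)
There is a genuine gap exactly at the step you flag as the main obstacle: the positivity of $\lambda_2$. You propose to get $\lambda_2>0$ from Zariski density of the monoid generated by the induction matrices, via Guivarc'h--Raugi/Gol'dsheid--Margulis-type criteria. Those criteria give only \emph{simplicity} of the spectrum, never the sign of $\lambda_2$. Here $\det A_3(k)=-1$, so the exponents sum to zero, and there is no symplectic symmetry to force $\lambda_2>0$. For instance, a symmetric Zariski-dense random walk on $\SL(3,\RR)$ has $\lambda_1>\lambda_2=0>\lambda_3$, and Pisot-type nonnegative cocycles (common among multidimensional continued fraction algorithms) have $\lambda_2<0$.

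Your weaker substitute (no invariant rational subspace containing the height direction) does not work either, and neither does your fallback (a direct Diophantine argument when only one exponent is positive). If $\dim E^u=1$, the weak stable set of the cocycle on $\RR^3/\ZZ^3$ is a countable union of translates of the codimension-one space $E^{cs}$. The line $\RR(1,1,1)$ will in general meet these translates at nonzero $t$, and in the Pisot situation such $S$-adic systems typically do have nontrivial eigenvalues. The Avila--Forni elimination is a dimension count that needs $\dim E^u\ge 2$, i.e.\ $\lambda_2>0$. Zariski density plays a separate role: it is the twisting condition that puts the line in general position with respect to $E^{cs}$, not a source of $\lambda_2>0$.

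The missing idea is to compare the top exponent of the cocycle with that of its inverse. Since the exponents sum to zero, $\lambda_2(A_3)>0$ is equivalent to $\lambda_1(A_3)<-\lambda_3(A_3)$, and $-\lambda_3(A_3)$ is the top exponent of the inverse cocycle. Conjugating $A_3(k)^{-1}$ by the fixed matrix $O$ and transposing gives the nonnegative matrix $B_3(k)$. This is in turn conjugate, by a matrix independent of $k$, to the length (Zorich) cocycle $Z_3(k)$. The matrix $Z_3(k)$ dominates $A_3(k)$ entrywise, differing only in the $(1,1)$ entry ($k-1$ versus $0$), so the domination is strict whenever $k\ge 2$.

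This gives $\lambda_1(A_3)\le\lambda_1(Z_3)$ for free. Each occurrence of a fixed positive block containing some $k_i\ge 2$ then gains a definite factor $K>1$; the paper uses $A_3(2)^{m_0}$ with explicit recursions on the dominant columns, or abstractly the domination lemma of the criterion section. Ergodicity of the Gibbs measure $\mu$ makes these blocks occur with positive frequency, which yields $\lambda_1(A_3)<\lambda_1(B_3)=-\lambda_3(A_3)$. Geometrically, band heights grow at rate $\lambda_1(A_3)$ while widths shrink at rate $\lambda_1(Z_3)$, so the area of the band complex tends to zero.

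With this in hand, you also need Zariski density (the Galois group of the characteristic polynomial of $A_3(3)$ is $S_3$), strong coincidence (left-properness of $\chi_k\circ\chi_l$) and log-integrability. Then the Hubert--Matheus $S$-adic version of Avila--Forni applies.
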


As mentioned above, Avila and Forni obtained in their celebrated
paper~\cite{AvilaForni} the analogous result for (non rotational) IETs. We use a
version of their strategy adapted to $S$-adic systems, due to P.~Hubert and
C.~Matheus~\cite[Theorem~A.1]{Solomyak} (see also~\cite{ArbuluDurand}). We show
that weak mixing for interval translation mappings is governed by the positivity
of the second Lyapunov exponent of a cocycle naturally associated to the
renormalization, see \Cref{prop:LyapunovExpsAs}. In particular, the same
paradigm of Avila-Forni works for ITMs, with new, technical, difficulties.

Using \emph{band complexes}, we are able to give a geometric interpretation of
the proof of \Cref{prop:LyapunovExpsAs}. Our ideas are partly based
on~\cite{DS}. This point of view allows us to obtain a criterion for the
positivity of the second Lyapunov exponent for certain matrix cocycles, see
\Cref{sec:criterion}. We believe that this criterion is robust enough to be used
for multidimensional continued fraction algorithms, and we plan to do this in
future work. In this direction, we apply the criterion to Bruin ITM on $d\ge 3$
intervals of continuity.

\begin{theorem}\label{thm:wmbruin}
Almost all Bruin ITMs of infinite type on $d\ge 3$ intervals are weakly mixing.
\end{theorem}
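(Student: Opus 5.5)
The plan follows the Avila--Forni paradigm in the $S$-adic form of Hubert--Matheus~\cite[Theorem~A.1]{Solomyak}, with the input supplied by the band-complex criterion of \Cref{sec:criterion}. First I will recall the renormalization of Bruin ITMs on $d\ge3$ intervals from \Cref{sec:Bruin_ITMs}. An infinite-type Bruin ITM is coded by an infinite path in a finite graph of combinatorial types, and each step induces on a subinterval of the attractor. This presents the map, restricted to $\overline{\Omega}$, as an $S$-adic system whose substitutions have incidence matrices $A_n$ in a finite (or countable) family of nonnegative integer matrices. The natural measure on parameters of infinite type, as in~\cite{AHS}, makes the renormalization an ergodic, and after acceleration Markov with bounded distortion, transformation, and the matrices $A_n$ form a locally constant cocycle over it. Its top Lyapunov exponent is simple and positive, and it governs unique ergodicity.

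Second, I will invoke \Cref{prop:LyapunovExpsAs}. It reduces weak mixing to two things. One is a recurrence property: some positive matrix $B$ with identical column structure recurs infinitely often along the path, which holds for almost every path by ergodicity and the Markov property. The other is positivity of the second Lyapunov exponent of the cocycle, or more precisely of the cocycle restricted to the relevant invariant subspace. Under these conditions the Veech/Hubert--Matheus argument applies. An eigenvalue $e^{2\pi i t}$ forces $\|t\,h^{(n)}\|_{\RR^d/\ZZ^d}\to 0$, where $h^{(n)}$ are the return-time vectors. So $t h^{(0)}$ lies in the ``weak stable space'' of the dual cocycle. When $\lambda_2>0$ this space, for a.e.\ parameter, meets the line spanned by $h^{(0)}$ only in $\ZZ^d$-trivial directions. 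A Fubini argument over the fibres then gives that a.e.\ parameter admits no nontrivial $t$. The pairing between parameters and the line through $h$ needs the usual care (the Avila--Forni ``transversality'' step), and I will use the version already packaged in \Cref{prop:LyapunovExpsAs}.

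Third, and this is the main obstacle, I must verify $\lambda_2>0$ for Bruin ITMs on every $d\ge3$, and I intend to do so with the criterion of \Cref{sec:criterion}. As I understand it, the criterion asks for two things. First, the cocycle contains, in the support of the (accelerated) measure, a family of matrices whose action on the projectivized complement of the Perron direction is not contained in a bounded set. Second, there is a pair of loops whose products fail to preserve a common proper subspace containing the unstable direction, realized geometrically by band complexes whose bands can be cut in two incompatible ways. I will try to exhibit, for general $d$, two explicit loops in the Bruin renormalization graph. One produces a parabolic-type unipotent growth transverse to the top eigenvector, giving unboundedness. The other twists it, giving irreducibility in the sense required. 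The induction on $d$ should work because the Bruin family for $d$ contains a ``copy'' of the $d-1$ family in a corner of the combinatorics. The difficulty is to make these loops uniform in $d$ and to check the hypothesis that there is no invariant rational subspace other than the trivial ones coming from the ITM structure, such as the kernel generated by non-attractor intervals. I will handle this by passing to the quotient cocycle on the attractor coordinates before applying the criterion. Once $\lambda_2>0$ is established, \Cref{prop:LyapunovExpsAs} yields weak mixing for almost every Bruin ITM of infinite type, proving \Cref{thm:wmbruin}.
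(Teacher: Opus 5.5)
Your outer frame (Veech criterion, then Avila--Forni in the $S$-adic form of Hubert--Matheus) matches the paper. However, your Step~3 cannot produce $\lambda_2>0$. The criterion of \Cref{sec:criterion} is not a pinching/irreducibility criterion. \Cref{inequalityonPF} says that if one cocycle is entrywise dominated by another, strictly along a recurring loop with positive product, then its top exponent is strictly smaller. The conditions you list (unbounded action transverse to the Perron direction, two loops with no common invariant subspace) are qualitative. They can at best separate exponents; they cannot fix the \emph{sign} of $\lambda_2$, since there is no symplectic symmetry here. The paper itself shows this. $B_3(k)=\transpose{(OA_3(k)^{-1}O^{-1})}$ is nonnegative with eventually positive products. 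The group it generates is the image of the group generated by the $A_3(k)$ under the automorphism $g\mapsto\transpose{(Og^{-1}O^{-1})}$, so it has the same pinching, irreducibility and Zariski-density properties. Yet $\lambda_2(B_3(\uk))=-\lambda_2(A_3(\uk))$. Conditions of your type hold for both cocycles and therefore cannot force $\lambda_2>0$.

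The missing idea is a quantitative comparison with the \emph{inverse} cocycle. The length cocycle satisfies $A_d(k)\le Z_d(k)$ entrywise: they differ only in the first row, where $Z_d(k)$ has $k-1$ instead of $0$ in columns $1,\dots,d-2$. Moreover $Z_d(k)=J_dA_d(k)^{-1}J_d^{-1}$ with $J_d$ independent of $k$. \Cref{inequalityonPF} then gives $\lambda_1(A_d)<\lambda_1(Z_d)=-\lambda_d(A_d)$. Since $\abs{\det A_d(k)}=1$, the exponents sum to $0$, so $\lambda_2+\dots+\lambda_{d-1}=-(\lambda_1+\lambda_d)>0$ and hence $\lambda_2>0$. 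Geometrically, the area of the band complex shrinks, cf.\ \Cref{prop:periodicrips}.

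Second, your weak-mixing reduction is misattributed. \Cref{prop:LyapunovExpsAs} only states that the $3\times 3$ cocycle $A_3(\uk)$ has $\lambda_2>0$. It neither reduces weak mixing to anything nor packages the Avila--Forni transversality step. The reduction the paper uses is \Cref{thm:HubertMatheus}. The step you defer to ``the usual care'' is showing that, for a.e.\ parameter, $t(1,\dots,1)$ lies in the weak stable space only for $t\in\ZZ$. That is exactly where its hypothesis~(5), Zariski density, enters: it supplies the twisting that keeps the line away from the central stable space. Positivity of $\lambda_2$ does not give this by itself.

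For general $d$ this is the paper's main extra input. \Cref{thm:Ak_generate_SL} shows that the group generated by the $A_d(k)$ contains $\SL(d,\ZZ)$. It starts from $A_d(k+1)A_d(k)^{-1}=T_{1d}$, then uses conjugation by $A_d(1)$ and commutator relations to reach all Steinberg generators. In particular there is no nontrivial invariant rational subspace, so your planned quotient cocycle on ``attractor coordinates'' has nothing to quotient by. You also still need the remaining hypotheses of \Cref{thm:HubertMatheus}:
\begin{itemize}
\item strong coincidence and positivity, since a composition of $d-1$ of the $\chi_k$ is left-proper;
\item log-integrability, from the Gibbs property of $\mu$ and the comparability of $\norm{A_d(k)}$ with $\norm{Z_d(k)}$.
\end{itemize}
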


As before, almost all refers to the natural probability measure, constructed
in~\cite{SC}, to respect to which almost every Bruin ITM of infinite is uniquely
ergodic. For $d=3$, Bruin ITMs are actually BT ITMs, so the previous result
generalizes \Cref{thm:wmbruintroubetzkoy}. To help the reader, we first present
a proof of \Cref{thm:wmbruin} in the case $d=4$ and then the general case.

Finally, we provide an explicit construction of a Bruin-Troubetzkoy ITM with
exceptional mixing properties: it is not weakly mixing.  More precisely, we
prove the following: 

\begin{theorem}\label{thm:irrational_eigenvalues} 
	There exists a Bruin-Troubetzkoy ITM $T$ which, equipped with the Lebesgue
	measure, is measure-theoretically isomorphic to an irrational rotation. In
	particular, $T$ has an irrational eigenvalue with a measurable
	eigenfunction, and discrete spectrum. 
\end{theorem}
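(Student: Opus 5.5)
The plan is to realize the desired ITM through the $S$-adic description of Bruin-Troubetzkoy ITMs of infinite type coming from the Rauzy-type induction of~\cite{BT,AHS}. Each infinite type BT ITM is coded by an infinite path in the induction graph, that is, by a sequence of elementary substitutions $\sigma_1,\sigma_2,\dots$ on the three-letter alphabet $\alphabet$. The Lebesgue measure (restricted to the attractor and normalized, when the ITM is uniquely ergodic) corresponds to the unique invariant measure of the $S$-adic subshift. I will choose a sequence of moves, periodically repeated blocks with growing exponents, such that the composed substitution is \emph{eventually a Sturmian-type substitution}. Concretely, the tower structure should have two towers whose heights $h_n,h'_n$ are consecutive denominators of the continued fraction of an irrational $\alpha$. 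The third tower should have height and measure negligible, with summable measure.

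First, using the description of admissible paths from~\cite{AHS}, I select loops in the induction graph with two properties. Their incidence matrices, restricted to two letters, are of the form $\begin{pmatrix} a_n & 1\\ 1 & 0\end{pmatrix}$ (up to a bounded correction). The third letter's column contributes only a fraction $\epsilon_n$ of the total length, with $\sum_n \epsilon_n<\infty$. This is where infinite type must be preserved, so the path must still visit the moves that shrink the attractor infinitely often. I will insert such a move once per block and verify, from the explicit form of the BT family, that the resulting parameters are genuine BT parameters of infinite type.

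Second, I prove unique ergodicity of the resulting system. Then I construct the eigenfunction for $e^{2\pi i\alpha}$ via the standard Veech/Host criterion for $S$-adic systems: an eigenvalue $e^{2\pi i t}$ exists measurably if $\sum_n \norm{t\, h^{(n)}}$ is finite along return times, with $\norm{\cdot}$ the distance to $\ZZ$, after discarding the small tower. For $t=\alpha$ and heights equal to continued fraction denominators $q_n$, we have $\norm{q_n\alpha}<1/q_{n+1}$. This is summable, and the third tower's contribution is controlled by $\epsilon_n$. Since the $\epsilon_n$ are summable, Borel-Cantelli shows that almost every point eventually lies in the two main towers.

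Third, I upgrade from an eigenvalue to an isomorphism with $R_\alpha$. The eigenfunction gives a factor map to the rotation. Injectivity modulo null sets follows because the partition into levels of the two main towers generates, and the eigenfunction separates these levels up to sets of measure $O(\epsilon_n+1/q_n)$. Levels of a Sturmian tower pair map to disjoint arcs of the rotation. Alternatively, I can compare directly with the Rokhlin towers of $R_\alpha$ given by the three-gap structure and build the isomorphism as a limit of tower isomorphisms.

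The main obstacle is the first step: finding admissible BT moves whose two-letter part has Sturmian (continued fraction) structure while the third interval keeps the map of infinite type with summably small mass. If the natural BT loops do not give an exactly Sturmian block, I would try to allow bounded perturbations. In that case the heights are $q_n+O(1)$ combinations, and I would choose $\alpha$ with rapidly growing partial quotients so the eigenvalue sum still converges.
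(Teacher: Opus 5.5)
There is a genuine gap, and it sits in your Step 1. That step is the whole content of the theorem, and you leave it as an open ``obstacle''. You do not exhibit BT loops with a Sturmian block $\bigl(\begin{smallmatrix} a_n & 1\\ 1 & 0\end{smallmatrix}\bigr)$, and the BT rules $u_{n+1}=v_n$, $v_{n+1}=w_nu_n^{k_{n+1}}$, $w_{n+1}=w_nu_n^{k_{n+1}-1}$ do not obviously contain one. More importantly, you give no mechanism forcing some tower heights to satisfy the continued-fraction recursion \emph{exactly}, and both the eigenvalue and the isomorphism need that. You also need no special moves to stay of infinite type: the free parameters are just the integers $k_n\ge 1$, and every such sequence is realized by a BT ITM.

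The missing idea is to let a \emph{single} tower dominate and to impose the recursion on its heights arithmetically. Over three steps, $u_{3n+3}=w_{3n}u_{3n}^{k_{3n+1}-1}v_{3n}^{k_{3n+2}}$. If $\gcd(\abs{u_{3n}},\abs{v_{3n}})=1$, one chooses $k_{3n+2}$ with $\abs{w_{3n}}+k_{3n+2}\abs{v_{3n}}\equiv\abs{u_{3n-3}}\pmod{\abs{u_{3n}}}$. This gives exactly $\abs{u_{3n+3}}=y_{n+1}\abs{u_{3n}}+\abs{u_{3n-3}}$. Next, $k_{3n+1}$ is taken huge, so that $v_{3n},w_{3n}$ are summably negligible and $T$ is rank one generated by the $u_{3n}$. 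It is also chosen so that $\abs{w_{3n+1}}$ is prime to $\abs{v_{3n}}$, and then $k_{3n+3}=\abs{u_{3n+3}}$ propagates coprimality. The angle $\gamma=[0;y_1,y_2,\dots]$ is an output of the construction, not fixed in advance. The second tower $d_n=c_{n-1}$ in the rotation's recursion $c_{n+1}=c_n^{y_{n+1}}d_n$ corresponds only to a junk block of length $\abs{u_{3n-3}}$, which is negligible in measure.

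Your fallback does not repair this. If the heights are $q_n$ plus a bounded nonzero correction $c$, then $\norm{(q_n+c)\alpha}$ stays near $\norm{c\alpha}>0$, so no growth of the partial quotients makes the eigenvalue series converge; exactness is essential. Your criterion also needs correcting: the partial quotients must be unbounded here, so the relevant rank-one sufficient condition weights by the number of repeated copies, roughly $\sum_n a_{n+1}\norm{th_n}<\infty$. With exact denominators this still holds, because $a_{n+1}\norm{q_n\alpha}<1/q_n$.

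Your injectivity argument in Step 3 is only heuristic. Showing that the eigenfunction separates tower levels into essentially disjoint arcs requires the cutting-and-stacking of $T$ to agree with that of $R_\alpha$ up to summable errors, which is exactly what the exact recursion supplies. Your alternative there, matching Rokhlin towers with those of $R_\alpha$, is the route the paper takes. It builds rank-one models $T\cong T_1\cong T_2\cong R_\gamma$. The junk is turned first into spacers, then into $y_{n+1}-k_{3n+1}+1$ extra copies of the tower followed by $\abs{u_{3n-3}}$ spacers, and finally into a copy of $c_{n-1}$. The eigenfunction then comes for free, with no separate injectivity argument.
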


To the best of our knowledge, this is the first example of ITM of infinite type
which is not weakly mixing. These results are based on the application of the
strategy suggested in~\cite{FHZ} for $3$-IETs. Moreover, we also show that one
can construct examples of BT ITMs having any given rational number as an
eigenvalue, see \Cref{prop:measurable_eigenvalue} on
\cpageref{prop:measurable_eigenvalue}.

\subsection{Organization of the paper}
We recall the definition of BT ITMs and prove \Cref{prop:LyapunovExpsAs} in
\Cref{sec:BT_ITMs}. In \Cref{sec:bandcomplexes}, we recall the definition of
band complexes and how the cocycles considered in the previous section can be
naturally interpreted in terms of the induction on a band complex. Equipped with
this geometric point of view, we give an alternative proof of
\Cref{thm:wmbruintroubetzkoy} in the special case when the induction is
self-similar. Then, in \Cref{sec:criterion}, we give a criterion for the
positivity of the second Lyapunov exponent. After recalling the definition of
Bruin ITMs on $4$ intervals in \Cref{sec:Bruin_ITMs}, we explain in
\Cref{sec:avilaforni} how we obtain weak mixing from the positivity of the
second Lyapunov exponent. In particular, we sketch the proof of the theorem of
P.~Hubert and C.~Matheus (and of the so-called Veech criterion). We return to
Bruin ITMs on any number of intervals and prove \Cref{thm:wmbruin} in
\Cref{sec:dBruin_ITMs}. In the last section, we prove
\Cref{thm:irrational_eigenvalues}.

\section*{Acknowledgements} 
S.~F.\ was partially supported by the Réseau Franco-Brésilien de Mathématiques.
A.~S.\ was partially supported by the Basic Research Program at the NRU HSE. We
heartily thank Carlos Matheus for suggesting that the group generated by the
cocycle matrices was indeed $\SL(d,\ZZ)$ and for giving us ideas on how to prove
this. His remarks led us to add \Cref{sec:dBruin_ITMs}.

\section{Bruin-Troubetzkoy ITMs}\label{sec:BT_ITMs}

Bruin-Troubetzkoy ITMs were introduced in~\cite{BT} and were, historically, the
first example of an infinite family of ITMs of infinite type. Moreover, the
first example of ITM of infinite type described by Boshernitzan and Kornfeld
also belongs to this family. We recall the definition, see \Cref{fig:BT_ITM} for
an example. Let $U_2\coloneq \Set{(\alpha, \beta) \given
1\ge\alpha\ge\beta\ge0}$.
For an internal point $(\alpha, \beta)\in U_2$ we define 
\[
T_{\alpha,\beta}(x) = \begin{cases}
                                  x+\alpha, & x\in[0,1-\alpha)\\
                                  x+\beta, & x\in[1-\alpha, 1-\beta)\\
                                  x+\beta-1, & x\in[ 1-\beta, 1),
                                  \end{cases}
\]

\begin{figure}[tb]
	\centering
	\includegraphics[width=.8\textwidth]{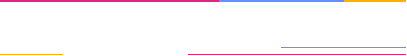}
	\caption{An example of a Bruin-Troubetzkoy ITM. The intervals below are
			images of the ones above, color coded.}
	\label{fig:BT_ITM}
\end{figure}

In~\cite{BT}, it was shown that the \Cref{BKconjecture} holds for BT ITMs and
that there exists a $G_\delta$-dense subset of the set of parameters which
produces uniquely ergodic infinite type ITMs. More recently, Bruin and Radinger
proved that there is a $G_\delta$-dense set of weakly mixing BT
ITMs~\cite{BruinRadinger} and a subset of the authors constructed in~\cite{AHS}
a Rauzy-like induction, which we denote $\rauzy$, for BT ITMs. Using this
induction, one can define a probability measure, invariant under the induction,
which is supported on the parameters associated to infinite type BT ITMs.
Moreover, an appropriate acceleration of this induction, analogous to the one
introduced by Zorich for IETs~\cite{Zorich}, recovers the Gauss-like
transformation used in the original paper by Bruin and Troubetzkoy. We refer to
these papers for the details.

Following Bruin and Troubetzkoy, one can associate an $S$-adic subshift based on
the substitutions given by
\begin{equation}\label{eq:BT_subs}
	\chi_k\colon \begin{cases}
		1 &\mapsto 2\\
		2 &\mapsto 31^k \\
		3 &\mapsto 31^{k-1},
	\end{cases}
\end{equation}
where $k\in\NN$ is given by $k = \bigl\lfloor \frac{1}{\alpha}\bigr\rfloor$,
see~\cite{BT,BruinRadinger} for details. Then, the abelianization of the
Bruin-Troubetzkoy substitutions are given by
\begin{equation}\label{eq:BR_Ak}
	A_3(k) = \begin{pmatrix}
		0 & k & k-1 \\
		1 & 0 & 0 \\
		0 & 1 & 1
	\end{pmatrix}.
\end{equation}

Let us consider the measure $\mu$ introduced in~\cite{AHS}. Then, we have the
following 
\begin{theorem}\label{prop:LyapunovExpsAs} 
	For $\mu$-almost every sequence $(k_i)_{i\in\NN}$ such that $k_{2i} > 1$ and
	$k_{2j+1} > 1$ for infinitely many $i$ and $j$, the product $A_{3}(k_1)
	A_3(k_2) \cdots A_3(k_n)$ has two positive and one negative Lyapunov
	exponent.
\end{theorem}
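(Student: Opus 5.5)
We need to show that the Lyapunov spectrum $\lambda_1\ge\lambda_2\ge\lambda_3$ of the product cocycle $A_3(k_1)\cdots A_3(k_n)$ with respect to $\mu$ satisfies $\lambda_1>0$, $\lambda_2>0$, $\lambda_3<0$. The plan starts from two structural facts. First, $\det A_3(k) = -(k - (k-1)) \cdot 1 \ldots$; expanding along the first column gives $\det A_3(k) = -1\cdot(k\cdot 1 - (k-1)\cdot 1) = -1$, so the cocycle takes values in $\GL(3,\ZZ)$ with $\abs{\det}=1$. Hence $\lambda_1+\lambda_2+\lambda_3=0$, assuming the integrability $\int \log\norm{A_3(k)}\,d\mu<\infty$ from the construction in~\cite{AHS}. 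Second, unique ergodicity of a.e.\ BT ITM~\cite{AHS} gives $\lambda_1>0$ with a simple top exponent: the products are eventually positive along the hypothesis that $k_i>1$ infinitely often on both parities, since $A_3(k)A_3(k')$ is strictly positive once both $k,k'\ge2$. Consequently $\lambda_1>\lambda_2$, and the whole statement reduces to proving $\lambda_2>0$, because then $\lambda_3=-\lambda_1-\lambda_2<0$.

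To prove $\lambda_2>0$ we use the adjoint/inverse symmetry. The key point is that the exponents of the inverse-transpose cocycle are $-\lambda_3\ge-\lambda_2\ge-\lambda_1$. We therefore show that $\lambda_3<-\lambda_1/2$, or equivalently that the second exponent of the exterior square cocycle $\wedge^2 A$ behaves correctly. Concretely, $\lambda_1+\lambda_2$ is the top exponent of $\wedge^2 A_3(k)$, which is (up to sign) the inverse transpose $\transpose{A_3(k)}^{-1}$. So $\lambda_2>0$ is equivalent to the inverse-transpose cocycle having top exponent strictly bigger than $\lambda_1$, i.e.\ $-\lambda_3>\lambda_1$.

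To get that strict inequality we compute $A_3(k)^{-1}$. It has entries growing like $k$ but with mixed signs. We then find a $\mu$-positive-measure family of words $(k_1,\dots,k_m)$ whose inverse-transpose product expands some cone faster than the forward product expands the positive cone. The cleanest concrete route is to pass to an induced, Zorich-type acceleration on a cylinder where returns are given by explicit products $B=A_3(k_1)\cdots A_3(k_m)$. Arguing as in Avila--Forni, we show there that the action on the 2-dimensional quotient $\RR^3/\langle v\rangle$, where $v$ is the Perron–Frobenius direction, is not uniformly bounded. A Zariski-density style argument then applies: the matrices generated are not simultaneously conjugate into a group preserving a conformal structure on the quotient. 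Combined with Furstenberg/Guivarc'h--Raugi style simplicity for the induced Bernoulli-like system (the $\mu$-measure has bounded distortion, as in~\cite{AHS}), this gives $\lambda_2>\lambda_3$. By itself that inequality does not give $\lambda_2>0$, so we strengthen it: we exhibit within the monoid a matrix with a unimodular $2\times2$ quotient block that is hyperbolic with determinant $\pm1$. This shows the quotient cocycle is a non-compact $\SL^\pm(2)$-cocycle with nonzero sum structure. The quotient exponents are $\lambda_2,\lambda_3$ with $\lambda_2+\lambda_3=-\lambda_1<0$, so we additionally need the growth in the quotient to beat $\lambda_1/2$.

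The main obstacle is exactly this last point: positivity of $\lambda_2$ is not a pure simplicity statement, because the quotient cocycle is not volume preserving. The approach I would try first is a direct expansion estimate. Following the Avila--Forni argument, we use the fact that BT-ITMs live on a Cantor set whose "width" vectors lie in a 1-dimensional subspace, so that both $v$ and a second invariant direction related to the translation lengths $(\alpha,\beta)$ are respected. The length parameters give a 2-dimensional invariant subspace of the transpose cocycle, on which the exponents are $\lambda_1$ and $\lambda_3$ (the contracting Rauzy-type direction, with $\lambda_3<0$ since the parameter space is contracted exponentially by the induction). That leaves $\lambda_2$ as the exponent on the complementary direction. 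We then show $\lambda_2=-\lambda_1-\lambda_3>0$ by proving $\abs{\lambda_3}>\lambda_1$, comparing the contraction rate of parameter cylinders (measure $\sim$ squared inverse norm, from the density estimates of $\mu$) against the expansion $\lambda_1$. This comparison, which amounts to a fractal-dimension-type estimate for the Cantor set of infinite-type parameters, is where I expect the real work to be.
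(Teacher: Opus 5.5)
Your reduction is correct and is the paper's. Since $\abs{\det A_3(k)}=1$, we have $\lambda_1+\lambda_2+\lambda_3=0$. The inverse cocycle has top exponent $-\lambda_3$, so $\lambda_2>0$ is equivalent to $-\lambda_3>\lambda_1$, which is exactly the inequality $\lambda_1(B_3(\uk))>\lambda_1(A_3(\uk))$ that the paper proves. There are two slips on the way:
\begin{itemize}
\item $\lambda_3<-\lambda_1/2$ is \emph{not} equivalent to this; it is strictly weaker.
\item $A_3(k)A_3(k')$ always has a zero in position $(2,1)$, so two factors never give a positive matrix.
\end{itemize}

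The real problem is that this strict inequality is the whole content of the theorem, and your proposal does not prove it.
\begin{itemize}
\item The Zariski-density/simplicity route only gives $\lambda_2>\lambda_3$, as you concede.
\item A single hyperbolic element in the monoid gives no quantitative control of the random product on the quotient against $\lambda_1/2$.
\item The closing ``fractal-dimension'' comparison is only announced. It rests on claims without basis: $\mu$ is carried by a Lebesgue-null set of parameters, so no density estimate makes cylinder measures comparable to $\norm{\cdot}^{-2}$. The ``2-dimensional invariant subspace'' carrying $\lambda_1$ and $\lambda_3$ is just a sum of Oseledets spaces, which says nothing about the sizes of the exponents.
\end{itemize}

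The missing idea is the one you state in your third paragraph and then abandon. In fixed coordinates the inverse cocycle is itself \emph{nonnegative}, so the two top exponents can be compared entrywise.
\begin{itemize}
\item Each $A_3(k)^{-1}$ preserves $Q^-=\{x_1,x_2\ge 0\ge x_3\}$. Conjugating by $O$ and transposing gives the nonnegative matrices $B_3(k)$. Their products have norm comparable to $\norm{(A_3(k_1)\cdots A_3(k_n))^{-1}}$, hence top exponent $-\lambda_3$.
\item $B_3(k)$ is conjugate, by a matrix independent of $k$, to the length cocycle $Z_3(k)=A_3(k)+(k-1)E_{11}$. So $A_3(k)\le Z_3(k)$ entrywise.
\item Products of nonnegative matrices respect entrywise inequalities, which gives $\lambda_1\le-\lambda_3$ at once.
\item For strictness, take a fixed positive block $\overline{A}$ (e.g.\ a power of $A_3(2)$) whose corresponding $\overline{Z}$ satisfies $\overline{A}_{ij}\le K^{-1}\overline{Z}_{ij}$ for some $K>1$. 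Each disjoint occurrence of the block costs a factor $K^{-1}$. By ergodicity of $\mu$ such occurrences have positive frequency $f$, so $\lambda_1\le -\lambda_3-f\log K<-\lambda_3$.
\end{itemize}
When one works with $B_3$ directly, as the paper does first, $A_3$ and $B_3$ are not entrywise comparable. That is why there the domination is propagated through recursions on the dominant columns instead. Neither Zariski density, nor simplicity, nor any dimension estimate is needed for this step.
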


\begin{remark}
A weaker version of \Cref{prop:LyapunovExpsAs}, assuming that the sequence of
$(k_i)$ is also \emph{linearly recurrent},
is~\cite[Proposition~3.5]{BruinRadinger}.  
\end{remark}

The matrices $(A_3(k))_{k\ge 1}$ all preserve the positive octant $Q^+ =
\Set{(x_1, x_2, x_3)\in\RR^3 \given x_i \ge 0}$. Moreover each of their inverses
preserves the octant $Q^- = \Set{(x_1, x_2, x_3)\in\RR^3 \given x_1, x_2 \ge 0
\ge x_3}$, the matrix
\[
	O = \begin{pmatrix}
		0 & 1 & 0 \\
		1 & 0 & 0 \\
		0 & 0 & -1
	\end{pmatrix}
\]
sends $Q^-$ to $Q^+$. Following~\cite{BruinRadinger}, we introduce the matrices
\[
	\widetilde{B}_k = O A^{-1}_3(k) O^{-1} = \begin{pmatrix}
		0 & 1 & k-1 \\
		1 & 0 & 0 \\
		0 & 1 & k
	\end{pmatrix}.
\]
However, it will be convenient to use the transpose of the above matrices:
\begin{equation}\label{eq:BR_cocycle}
	B_3(k) =\transpose{\widetilde{B}}_3(k) = \begin{pmatrix}
		0 & 1 & 0 \\
		1 & 0 & 1 \\
		k-1 & 0 & k
	\end{pmatrix}.
\end{equation}
Both $A_3(k_1) A_3(k_2) \cdots A_3(k_n)$ and $B_3(k_1) B_3(k_2) \cdots B_3(k_n)$
are eventually strictly positive matrices, and so they have positive first
Lyapunov exponent, see~\cite{BruinRadinger}. Let us denote $\lambda_i
(A_3(\uk))$ for the $i^\text{th}$ limiting exponent of the infinite product
$A_3(k_1) A_3(k_2) \cdots A_3(k_n)$, and similarly for $\lambda_i (B_3(\uk))$.
By the definition of Lyapunov exponents, see, e.g.,~\cite[Section~2]{Ruelle}, we
have
\[
	\lambda_i (A_3(\uk)) = - \lambda_{4-i} (B_3(\uk)).
\]
Since $\det(A_3(k_1) A_3(k_2) \cdots A_3(k_n)) = \pm 1$, the sum of the Lyapunov
exponents is $0$, and so if we prove that $\lambda_1 (B_3(\uk)) > \lambda_1
(A_3(\uk))$, then $\lambda_2(A_3(\uk)) > 0$, proving
\Cref{prop:LyapunovExpsAs}. So it remains to prove the following 

\begin{proposition}\label{prop:LyapunovExpsAsIn}
For $\mu$-almost every sequence $(k_i)_{i\in\NN}$ the following inequality holds:
\[
	\lambda_1 (B_3(\uk)) > \lambda_1(A_3(\uk)).
\]
\end{proposition}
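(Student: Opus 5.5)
Both products are eventually strictly positive and preserve cones ($Q^+$ for $A_3(k)$; $Q^+$ also for $B_3(k)$, which has nonnegative entries). The first exponent is therefore the exponential growth rate of the norm of the product, equivalently of $\mathbf 1^\top(\cdot)\mathbf 1$ or of any fixed positive vector. The plan is to compare the growth of $B_3(k_1)\cdots B_3(k_n)$ with that of $A_3(k_1)\cdots A_3(k_n)$ along a suitable family of positive vectors.

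The comparison will be done step by step using explicit norm estimates, exploiting the last row of $B_3(k)$. For a positive row vector $v=(v_1,v_2,v_3)$:
\begin{itemize}
\item $vA_3(k)=(v_2,\,kv_1+v_3,\,(k-1)v_1+v_3)$, with $\ell^1$ norm $(2k-1)v_1+v_2+2v_3$;
\item $vB_3(k)=(v_2+(k-1)v_3,\,v_1,\,v_2+kv_3)$, with $\ell^1$ norm $v_1+2v_2+(2k-1)v_3$.
\end{itemize}
So $A$ grows strongly along the first coordinate and $B$ along the third. Iterating, the first coordinate of $vA(k_1)A(k_2)$ is $kv_1+v_3$, while the first coordinate of $vB(k_1)B(k_2)$ is built from $v_1$ and $v_3$ through $B$'s large last row. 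I will compute two-step products $A_3(k)A_3(l)$ and $B_3(k)B_3(l)$; this is why the hypothesis involves both even and odd indices. I expect the dominant entries to be $\sim kl$ in both, but with different lower-order structure.

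A cleaner route is to write $B_3(k)=DA_3(k)^{-\top}D^{-1}$, with $D$ a signed permutation (absorbing $O$ and transposition). Then $\lambda_1(B)$ is the top exponent of the inverse-transpose cocycle, i.e.\ $-\lambda_3(A)$, and the claim is exactly $\lambda_1+\lambda_3<0$ for $A$, i.e.\ $\lambda_2(A)>0$. The direct strategy is then to exhibit a vector in $Q^-$ (the cone preserved by inverses) whose growth under the $B$-products is strictly larger than the growth of positive vectors under the $A$-products by a definite factor at each ``good'' time.

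Concretely, I will define a projective Hilbert-metric comparison. Let $u_n$ be the Perron direction of $A$-products in $Q^+$ and $w_n$ that of $B$-products. The ratio of norm growth at step $n$ is a cocycle $r(k_n,\text{state})$, and
\[
\lambda_1(B)-\lambda_1(A)=\int\log r\,d\mu,
\]
by Birkhoff's theorem applied to the renormalization with the measure $\mu$ of \cite{AHS} (ergodic, so the exponents are a.s.\ constant). The plan is to show $\log r\ge 0$ always, via the entrywise comparison above using $2k-1\ge 1$ with some slack, and $\log r>0$ on a set of positive $\mu$-measure, namely when $k_i>1$ at consecutive parities. This is where $k>1$ enters: for $k=1$ the matrix $A_3(1)$ has zero column structure ($k-1=0$), which kills the gain.

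The main obstacle is establishing the pointwise inequality $r\ge1$. This requires controlling where the projective directions $u_n,w_n$ sit inside the cones; they are not independent, since $w_n$ depends on the future under the dual cocycle. If a pointwise inequality fails, I will instead use the invariant-cone version: find a cone strictly inside $Q^+$, invariant under two-step $B$-products with $k,l\ge2$, on which two-step $B$ expands by at least $(1+\delta)$ times the expansion of $A$. I would search for it by explicitly computing $A_3(k)A_3(l)$ and $B_3(k)B_3(l)$.
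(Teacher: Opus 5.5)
\textbf{There is a genuine gap.} The step you yourself call the main obstacle is the entire content of the proposition, and nothing in your plan establishes it. That step is that $\log r\ge 0$ at every step, or, in the fallback, that the $B$-products never lose ground against the $A$-products between ``good'' blocks.

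\emph{Why your suggested inequality does not work.} Your $\ell^1$ expressions $(2k-1)v_1+v_2+2v_3$ and $w_1+2w_2+(2k-1)w_3$ are evaluated at two different vectors: the current $A$-direction $v$ and the current $B$-direction $w$. So $2k-1\ge 1$ says nothing about their ratio unless you have a relation between $v$ and $w$ that persists along the orbit. You give no reason why a one-step growth ratio along the two respective Perron directions should be $\ge 1$ pointwise. (Writing $\lambda_1(B)-\lambda_1(A)=\int\log r$ would also need an invariant measure on the skew product carrying both directions, not $\mu$ itself.)

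\emph{Why the fallback does not help.} A cone invariant under two-step $B$-products with $k,l\ge 2$ controls only the good blocks. A typical sequence contains arbitrary runs of other symbols, in particular $k=1$, and across those runs you again need the missing comparison.

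\emph{Why the reformulation is not progress.} Writing $B_3(k)=O\,A_3(k)^{-\top}O$ only restates the reduction $\lambda_1(B)=-\lambda_3(A)$, which the paper already makes before the statement.

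\textbf{The missing idea} is a coupling of the two cocycles that is monotone at every step, for every $k\ge 1$. Take
\[
L=\begin{pmatrix}0&0&1\\0&1&1\\1&1&0\end{pmatrix}.
\]
One checks $B_3(k)L=L\,A_3(k)+(k-1)e_3e_1^{\top}$. Equivalently, $L^{-1}B_3(k)L=Z_3(k)=A_3(k)+(k-1)e_1e_1^{\top}\ge A_3(k)$ entrywise, with equality exactly when $k=1$; this is \Cref{lemma:BT_conjugation}. Since $L\ge 0$, it follows that for every sequence
\[
B_3(k_1)\cdots B_3(k_n)\,L\ \ge\ L\,A_3(k_1)\cdots A_3(k_n).
\]
So steps with $k=1$ are neutral rather than losing. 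This is exactly what the paper's inequalities $a_j\ge Mz_j$, $b_j\ge M(y_j+z_j)$, $c_j\ge M(x_j+y_j)$ encode: they say $(a_j,b_j,c_j)\ge M\,L(x_j,y_j,z_j)$, and \Cref{lemma:inductiononrecursion} propagates this through the recursions.

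Strictness then comes from a fixed positive block, such as $A_3(2)^{m_0}$ in \Cref{lemma:PF}, on which the domination gains a definite factor $K>1$. Because the coupling is monotone, these gains multiply over disjoint occurrences of the block. By ergodicity of $\mu$ those occurrences have positive frequency, which gives $\lambda_1(B_3(\uk))-\lambda_1(A_3(\uk))\ge f\log K>0$. This route needs no Hilbert-metric comparison, no integral formula, and no control of where the Perron directions sit.
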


We can assume that $k_{2i} > 1$ and $k_{2j+1} > 1$ for infinitely many $i$ and
$j$ (it holds for $\mu$-almost every sequence $(k_i)_{i\in\NN}$). The first step
to prove the \namecref{prop:LyapunovExpsAsIn} is finding out which column is the
one that grows the most in the products $A_3(k_1) A_3(k_2) \cdots A_3(k_n)$ and
$B_3(k_1) B_3(k_2) \cdots B_3(k_n)$. Given a matrix $X =
(x_{ij})\in\Mat_{3\times 3}(\RR)$, we denote
\[
	\cnorm{X} = \max_{j=1,2,3} \sum_{i=1}^3 \abs{x_{ij}},
\]
the norm of the largest column in the matrix. Equivalently,  it is the $L^1$-norm on
the columns on $X$, seen as vectors in $\RR^3$.

\begin{lemma}\label{lemma:columngrowth} For every $n\in\NN$, the second column
	in $A_3(k_1) A_3(k_2) \cdots A_3(k_n)$ is the largest one, i.e,
	\[
		\cnorm{A_3(k_1) A_3(k_2)\cdots A_3(k_n)} = \sum_{i=1}^{3} (A_3(k_1) A_3(k_2)\cdots A_3(k_n))_{i2}.
	\]

	Similarly, the third column of $B_3(k_1) B_3(k_2)\cdots B_3(k_n)$ is the
	largest one.
\end{lemma}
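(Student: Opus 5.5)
The plan is a direct induction on $n$. I track only the $L^1$-norms of the three columns of the partial products. Every matrix $A_3(k)$ and $B_3(k)$ has nonnegative entries, so every partial product does too. For nonnegative matrices the column sums are additive under the operation "new column $=$ nonnegative combination of old columns". So the whole statement reduces to a linear recursion on a triple of nonnegative numbers.

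\emph{The $A_3$ case.} Write $P_n = A_3(k_1)\cdots A_3(k_n)$, with $P_0 = I$, and let $s_1,s_2,s_3$ denote the column sums of $P_{n-1}$.

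The columns of $A_3(k)$ are $e_2$, $k e_1 + e_3$ and $(k-1)e_1 + e_3$. Hence the columns $c_j'$ of $P_n = P_{n-1}A_3(k_n)$ are
\[
c_1' = c_2, \qquad c_2' = k_n c_1 + c_3, \qquad c_3' = (k_n-1)c_1 + c_3 ,
\]
and the new column sums are
\[
s_1' = s_2, \qquad s_2' = k_n s_1 + s_3, \qquad s_3' = (k_n-1)s_1 + s_3 .
\]
The inequality $s_2' \ge s_3'$ is automatic.

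The inequality $s_2'\ge s_1'$ reads $k_n s_1 + s_3 \ge s_2$. This is not automatic, so I strengthen the induction hypothesis to the pair of inequalities
\[
s_1 \le s_2 \le s_1 + s_3 .
\]
These hold for $P_0=I$, where $s=(1,1,1)$.

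Assuming them for $P_{n-1}$, I check them for $P_n$:
\begin{itemize}
\item The inequality $s_1'\le s_2'$ is $s_2 \le k_n s_1 + s_3$. It follows from $s_2\le s_1+s_3$, since $k_n\ge 1$.
\item The inequality $s_2' \le s_1'+s_3'$ is $k_n s_1 + s_3 \le s_2 + (k_n-1)s_1 + s_3$, that is, $s_1\le s_2$. This is the other half of the hypothesis.
\end{itemize}
So the invariant propagates. Together with $s_2'\ge s_3'$, it shows the second column is the largest for every $n\ge 1$.

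\emph{The $B_3$ case.} The columns of $B_3(k)$ are $e_2+(k-1)e_3$, $e_1$ and $e_2+k e_3$. The new column sums are therefore
\[
s_1' = s_2 + (k_n-1)s_3, \qquad s_2' = s_1, \qquad s_3' = s_2 + k_n s_3 .
\]
Here $s_3'\ge s_1'$ is automatic. I take as invariant the single inequality $s_3\ge s_1$, which holds at $P_0=I$.

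This invariant gives $s_3' = s_2 + k_n s_3 \ge s_3 \ge s_1 = s_2'$. Hence the third column dominates both others, and in particular $s_3'\ge s_1'$ keeps the invariant alive.

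\emph{Main obstacle.} The only genuinely non-mechanical step is finding the right two-sided invariant $s_1\le s_2\le s_1+s_3$ in the $A_3$ case. The naive hypothesis "column $2$ is the largest" is not by itself inductive. After that, everything is the short bookkeeping above.
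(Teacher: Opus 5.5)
Your proof is correct and follows essentially the same route as the paper. The paper runs an induction on $n$ with the strengthened invariant that the second column is the largest and is at most the sum of the other two, using $v_2'-v_3'=v_1$ for the upper bound, which is exactly your check $s_1\le s_2$. Your invariant $s_3\ge s_1$ for the $B_3$ case, also correct, fills in the part the paper dismisses as ``similar.''
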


\begin{proof}
	We only prove the statement about the $A_3(k)$ matrices, the other being
	similar.

	We will show, by induction over $n$, simultaneously that the second column
	has the largest $L^1$-norm and that this is smaller than the sum of the
	remaining two columns.
	
	The case $n=1$ is clear from the definition of the matrices. Let us do the
	induction step. Consider the product $A_3(k_1) \cdots A_3(k_n) A_3(k_{n+1})$,
	and, for simplicity, let us write $k_{n+1}=k$, then $ A_3(k_1) \cdots A_3(k_n)
	A_3(k_{n+1}) = A_3(k_1) \cdots A_3(k_n) A_3(k)$ is equal to
	\[
		 \begin{pmatrix}
			a_{11} & a_{12} & a_{13} \\
			a_{21} & a_{22} & a_{23} \\
			a_{31} & a_{32} & a_{33} 
		\end{pmatrix}
		\begin{pmatrix}
			0 & k & k-1 \\
			1 & 0 & 0 \\
			0 & 1 & 1
		\end{pmatrix}
		=
		\begin{pmatrix}
			a_{12} & k a_{11} + a_{13} & (k-1) a_{11} + a_{13} \\
			a_{22} & k a_{21} + a_{23} & (k-1) a_{21} + a_{23} \\
			a_{32} & k a_{31} + a_{33} & (k-1) a_{31} + a_{33} 
		\end{pmatrix}.
	\]
	If we denote with $v_1$, $v_2$ and $v_3$ three columns of $A_3(k_1) \cdots
	A_3(k_n)$ and $v'_1$, $v'_2$ and $v'_3$ the columns of $A_3(k_1) \cdots
	A_3(k_n) A_3(k_{n+1})$. We remark that $v'_1 = v_2$. It is clear that
	$\lOnenorm{v'_2} \ge \lOnenorm{v'_3}$ (where $\lOnenorm{\cdot}$ is
	$L^1$-norm on $\RR^3$). It remains to prove that $\lOnenorm{v'_2} \ge
	\lOnenorm{v'_1}$ as well. By the induction hypothesis, $\lOnenorm{v_1 + v_3} \ge
	\lOnenorm{v_2}$. Hence, we have that
	\[
		\lOnenorm{v'_2} = \lOnenorm{kv_1 + v_3} \ge \lOnenorm{v_1 + v_3} \ge \lOnenorm{v_2} = \lOnenorm{v'_1} 
	\]
	and we are done.

	To complete the induction, we need to show that $\lOnenorm{v'_1 + v'_3} \ge
	\lOnenorm{v'_2}$ too. We remark that $v'_2 - v'_3 = v_1$. So
	\[
		\lOnenorm{v'_2 - v'_3} = \lOnenorm{v_1} \le \lOnenorm{v_2} = \lOnenorm{v'_1},
	\]
	and the induction is complete.
\end{proof}

We will now derive some recurrence relations on the coefficients of the largest
columns in these product of matrices. Let us fix an $n\in\NN$ and consider some
product $A_3(k_1) A_3(k_2) \cdots A_3(k_n)$. We denote by $(x_1, y_1, z_1)$ the
entries of the second column of $A_3(k_n)$, by direct inspection we have
\begin{equation}\label{eq:seed_xyz}
	x_1 = k_n, \qquad y_1 = 0, \qquad z_1 = 1.
\end{equation}
Then, for $j=1,\dots,n-1$, let
\[
	\begin{pmatrix}
		x_{j+1} \\
		y_{j+1} \\
		z_{j+1} 
	\end{pmatrix}
	=
	\begin{pmatrix}
		0 & k_{n-j} & k_{n-j}-1 \\
		1 & 0 & 0 \\
		0 & 1 & 1
	\end{pmatrix}
	\begin{pmatrix}
		x_{j} \\
		y_{j} \\
		z_{j} 
	\end{pmatrix},
\]
from which we obtain the recursions:
\begin{equation}\label{eq:recurrence_xyz}
	\begin{split}
		x_{j+1} &= (k_{n-j} - 1) z_j + k_{n-j} y_j, \\
		y_{j+1} &= x_j, \\
		z_{j+1} &= y_j + z_j.
	\end{split}
\end{equation}
To avoid confusion, we stress that we are proceeding from the rightmost matrix
to the leftmost one.

Similarly, if we now consider $B_3(k_1) B_3(k_2) \cdots B_3(k_n)$, we let $(a_1,
b_1, c_1)$ be the third column of $B_3(k_n)$, which is given by
\begin{equation}\label{eq:seed_abc}
	a_1 = 0, \qquad b_1 = 1, \qquad c_1 = k_n.
\end{equation}
As before, we obtain the recursions
\begin{equation}\label{eq:recurrence_abc}
	\begin{split}
		a_{j+1} &= b_j, \\
		b_{j+1} &= a_j + c_j, \\
		c_{j+1} &= (k_{n-j} - 1) a_j + k_{n-j} c_j.
	\end{split}
\end{equation}

We remark that there is one key difference between the two sets of recursions.
In the former $x_{j+1}$ does \emph{not} depend on $x_j$, whereas in the latter
$c_{j+1}$ \emph{does} depend from $c_j$. This yields a much faster grow of the
second set of entries, and hence of the corresponding matrices.

The next result is the key technical lemma, which morally says that if the norm
of the product of the matrices $B_3(k_1) \cdots B_3(k_n)$ is bigger than the norm
of the corresponding product of matrices $A_3(k)$ at some moment $j_0$, then the
same estimate continues to be true for all $n\ge j_0$.

\begin{lemma}\label{lemma:inductiononrecursion}
	Let $M\ge 1$. Assume that there exists a $j_0\in\NN$ such that 
	\begin{equation}\label{eq:k-domination}
		a_j \ge Mz_j, \qquad b_j \ge M(y_j + z_j), \qquad c_j \ge M(x_j + y_j)
	\end{equation}
	for $j = j_0$. Then, the estimate~\eqref{eq:k-domination} holds for every $j
	\ge j_0$ and 
	\[
		\cnorm{A_3(k_1) A_3(k_2) \cdots A_3(k_n)} \le \frac{1}{M}
		\cnorm{B_3(k_1) B_3(k_2) \cdots B_3(k_n)}.
	\]
\end{lemma}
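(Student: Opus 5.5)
Plan: induction on $j$ using the recursions \eqref{eq:recurrence_xyz} and \eqref{eq:recurrence_abc}, which are driven by the same integer $k\coloneq k_{n-j}\ge 1$ at each step. All quantities are nonnegative: the seeds \eqref{eq:seed_xyz}, \eqref{eq:seed_abc} are, and $k-1\ge 0$.

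\textbf{Induction step.} Assume \eqref{eq:k-domination} holds at some $j\ge j_0$; I verify it at $j+1$ one inequality at a time.
\begin{itemize}
\item $a_{j+1}=b_j\ge M(y_j+z_j)=Mz_{j+1}$. This is exactly the second hypothesis.
\item $b_{j+1}=a_j+c_j\ge Mz_j+M(x_j+y_j)=M(x_{j+1}+y_{j+1})$, since $x_{j+1}+\dots$ is not needed here: indeed $y_{j+1}+z_{j+1}=x_j+y_j+z_j$. This uses the first and third hypotheses.
\item For the third inequality we need $x_{j+1}+y_{j+1}=(k-1)z_j+ky_j+x_j$. We have
\[
c_{j+1}=(k-1)a_j+kc_j\ge M\bigl((k-1)z_j+kx_j+ky_j\bigr)\ge M\bigl((k-1)z_j+ky_j+x_j\bigr),
\]
using $a_j\ge Mz_j$, $c_j\ge M(x_j+y_j)$, and finally $kx_j\ge x_j$ because $k\ge1$ and $x_j\ge0$.
\end{itemize}
So the estimate propagates, and by induction it holds for every $j\ge j_0$ up to $j=n$.

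\textbf{Comparing norms.} By construction, $(x_n,y_n,z_n)$ is the second column of $A_3(k_1)\cdots A_3(k_n)$ and $(a_n,b_n,c_n)$ is the third column of $B_3(k_1)\cdots B_3(k_n)$. By \Cref{lemma:columngrowth}, these are the columns realizing $\cnorm{\cdot}$. Summing the three inequalities at $j=n$ gives
\[
a_n+b_n+c_n\ge M(x_n+2y_n+2z_n)\ge M(x_n+y_n+z_n).
\]
Dividing by $M$ yields $\cnorm{A_3(k_1)\cdots A_3(k_n)}\le \frac1M\cnorm{B_3(k_1)\cdots B_3(k_n)}$.

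\textbf{Main obstacle.} The only delicate point is the third inequality, where the coefficient of $x_j$ does not match directly. The fact that $c$ depends on $c_j$ while $x$ does not depend on $x_j$ is exactly what leaves the slack $kx_j\ge x_j$. The argument also requires nonnegativity of all entries, which holds as noted at the start.
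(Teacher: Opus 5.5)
Your proof is correct and follows the paper's own argument. It uses the same inductive propagation through the recursions, with the slack $k x_j \ge x_j$ in the $c$-inequality, and then sums the three estimates at $j=n$ and applies \Cref{lemma:columngrowth}; your direct bound $a_n+b_n+c_n\ge M(x_n+2y_n+2z_n)$ is actually cleaner than the paper's, which needlessly subtracts $2$ and then invokes $z_n\ge 3$. One slip to fix: in your second bullet the right-hand side should be $M(y_{j+1}+z_{j+1})$, not $M(x_{j+1}+y_{j+1})$, as your own identity $y_{j+1}+z_{j+1}=x_j+y_j+z_j$ confirms.
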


\begin{proof}
	We begin by showing, by induction on $j\ge j_0$, that the
	estimate~\eqref{eq:k-domination} holds.
	
	The base case is true by assumption. In the induction step, we will use
	repeatedly \cref{eq:recurrence_abc,eq:recurrence_xyz}, and the induction
	hypothesis. We have
	\[
			a_{j+1} = b_j \ge M(y_j + z_j) = M z_{j+1}.
	\]
	Similarly, we have
	\[
		b_{j+1} = a_j + c_j \ge Mz_j + M(x_j + y_j) = M(z_{j+1} + y_{j+1}).
	\]
	Finally,
	\[
		\begin{split}
			c_{j+1} = (k_{n-j} - 1) a_j + k_{n-j} c_j &\ge M(k_{n-j} - 1)z_j + M k_{n-j}(x_j + y_j) \\
				&= M((k_{n-j} - 1)z_j + k_{n-j} y_j) + M k_{n-j} x_j\\
				&\ge Mx_{j+1} + Mx_j \\
				&= M(x_{j+1} + y_{j+1}) \\
		\end{split}
	\]

	We now prove the last statement. For every $n\in\NN$, by
	\Cref{lemma:columngrowth} we have
	\[
		\cnorm{A_3(k_1) A_3(k_2) \cdot A_3(k_n)} = \lOnenorm{(x_n, y_n, z_n)}
			= x_n + y_n + z_n,
	\]
	similarly
	\[
		\cnorm{B_3(k_1) B_3(k_2) \cdot B_3(k_n)} = \lOnenorm{(a_n, b_n, c_n)}
			= a_n + b_n + c_n.
	\]
	By the first part of the proof, we have
	\begin{multline*}		
		a_n + b_n + c_n \ge M(z_n - 1 + z_n + y_n - 1 + x_n + y_n) \\
		= M(x_n + 2 y_n + 2z_n -2) \ge M(x_n + y_n + z_n),
	\end{multline*}
	where in the last inequality we have used that $z_n \ge 3$ if $n\ge 5$. Then 
	\[
		\cnorm{A_3(k_1) A_3(k_2) \cdot A_3(k_n)} \le \frac{1}{M}\cnorm{B_3(k_1) B_3(k_2) \cdot B_3(k_n)}
	\]
	as we wanted.
\end{proof}

The next result quantifies the fact that the obstacle to positivity of the
second Lyapunov exponent of $A_3(\uk)$ is the presence of many entries $k_j =
1$.

\begin{lemma}\label{lemma:PF}
	There exist an $m_0\in\NN$ and a $M' > 2$ such that
	\[
		\max_{i,j=1, 2, 3} (A_3(2))_{ij}^{m_0} \le \frac{1}{M'} \min_{i,j=1, 2, 3} (B_3(2))_{ij}^{m_0}.
	\]
\end{lemma}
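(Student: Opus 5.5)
The plan is to read the statement as a comparison between the entries of the $m_0$-th powers $A_3(2)^{m_0}$ and $B_3(2)^{m_0}$, and to derive it from the Perron--Frobenius theorem by showing that the Perron eigenvalue of $B_3(2)$ is strictly larger than that of $A_3(2)$. Once that gap is established, the entries of $B_3(2)^m$ grow at an exponentially faster rate than those of $A_3(2)^m$. So the ratio $\min_{ij}(B_3(2)^m)_{ij}/\max_{ij}(A_3(2)^m)_{ij}$ tends to infinity, and any $m_0$ for which it exceeds $2$ works.

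First I check that both matrices are primitive, i.e.\ nonnegative with some strictly positive power. For
\[
A_3(2)=\begin{pmatrix}0&2&1\\1&0&0\\0&1&1\end{pmatrix},
\]
the associated graph has a loop at vertex $3$ and is strongly connected, via the edges $1\to2$, $1\to3$, $2\to1$, $3\to2$ and $3\to3$ (here $i\to j$ means the $(i,j)$ entry is positive). For
\[
B_3(2)=\begin{pmatrix}0&1&0\\1&0&1\\1&0&2\end{pmatrix},
\]
there is a loop at vertex $3$ and the edges $1\to2$, $2\to1$, $2\to3$, $3\to1$ and $3\to3$. Irreducibility together with a positive diagonal entry gives primitivity. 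Alternatively, I would simply compute a small power, such as the fourth, and see that it is positive.

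Next I compute the characteristic polynomials. For $A_3(2)$ it is $\lambda^3-\lambda^2-2\lambda+1$, and for $B_3(2)$ it is $\lambda^3-2\lambda^2-\lambda+1$. Evaluating at rational points locates the Perron roots: $\rho_A\approx 1.802$, which lies in $(1.8,1.81)$ because the sign of the polynomial changes there, and $\rho_B\approx 2.247$, which lies in $(2.2,2.25)$ by the same sign check. Hence $\rho_B>\rho_A$.

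By the Perron--Frobenius theorem for primitive matrices, $A_3(2)^m=\rho_A^m(u_Av_A^{\top}+o(1))$ and $B_3(2)^m=\rho_B^m(u_Bv_B^{\top}+o(1))$, where $u,v$ are the strictly positive right and left Perron eigenvectors, normalised so that $v^{\top}u=1$. This gives constants $C,c>0$ with $\max_{ij}(A_3(2)^m)_{ij}\le C\rho_A^m$ and $\min_{ij}(B_3(2)^m)_{ij}\ge c\rho_B^m$ for all large $m$. Therefore
\[
\frac{\min_{ij}(B_3(2)^m)_{ij}}{\max_{ij}(A_3(2)^m)_{ij}}\ \ge\ \frac{c}{C}\Bigl(\frac{\rho_B}{\rho_A}\Bigr)^m\longrightarrow\infty .
\]
Choosing $m_0$ large and setting $M'$ equal to this ratio at $m=m_0$, so that $M'>2$, proves the lemma. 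The $M'$ obtained this way can be made as large as we like, which is presumably what is needed when the lemma is combined with \Cref{lemma:inductiononrecursion}.

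The only step needing real care is making the lemma effective, in case an explicit $m_0$ is wanted. The ratio $\rho_B/\rho_A\approx1.25$ is modest, and the constants $c/C$ depend on the eigenvectors, so the asymptotic argument alone does not pin down $m_0$. If an explicit value is required, I would simply compute the integer matrices $A_3(2)^m$ and $B_3(2)^m$ for increasing $m$, which are small exact computations, until the inequality with a factor greater than $2$ visibly holds.
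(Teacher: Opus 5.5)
Your proposal is correct and takes essentially the same route as the paper, which also proves the lemma by comparing the Perron eigenvalues of $A_3(2)$ and $B_3(2)$. The paper additionally notes that a direct computation gives $m_0=15$, $M'=8997/4334$, which is where your effective fallback would end up. Your checks of primitivity and of the characteristic polynomials $\lambda^3-\lambda^2-2\lambda+1$ and $\lambda^3-2\lambda^2-\lambda+1$, giving $\rho_A\approx 1.802<\rho_B\approx 2.247$, supply details that the paper's sketch leaves implicit.
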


\begin{proof}
	The result can be proven by direct computation. For instance, $m_0 = 15$ and
	$M' = \frac{8997}{4334}$ work. We now sketch a proof using Perron-Frobenius
	theorem. Let $\lambda$ and $\lambda'$ be the Perron-Frobenius eigenvalues of
	$A_3(2)$ and $B_3(2)$ respectively. One can check that $\lambda < \lambda'$. For
	$m$ large enough, all the coefficients of $A_3(2)^m$ are proportional to
	$\lambda^m$. Similarly, the coefficients of $B_3(2)^m$, are proportional to
	$(\lambda')^m$. Hence, for $M'$ arbitrarily large, there exists an $m_0$ as
	in the statement.
\end{proof}

From \Cref{lemma:inductiononrecursion,lemma:PF}, we immediately get

\begin{corollary}\label{lemma:A2inA}
	Assume that there exist an $j_0$ such that $A_3(k_{n-j_0}) \cdots
	A_3(k_n) = A_3(2)^{m_0}$. Then,~\eqref{eq:k-domination} holds for $j_0$, with $M
	= M'/2$.
\end{corollary}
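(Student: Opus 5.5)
The plan is to read off the vectors $(x_{j_0},y_{j_0},z_{j_0})$ and $(a_{j_0},b_{j_0},c_{j_0})$ directly as columns of the powers $A_3(2)^{m_0}$ and $B_3(2)^{m_0}$. Then \Cref{lemma:PF} gives each inequality in~\eqref{eq:k-domination}, at the cost of a factor $2$.

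First I fix the indexing. By~\eqref{eq:seed_xyz} and~\eqref{eq:recurrence_xyz}, $(x_j,y_j,z_j)$ is the second column of the product of the last $j$ matrices $A_3(k_{n-j+1})\cdots A_3(k_n)$. Likewise, by~\eqref{eq:seed_abc} and~\eqref{eq:recurrence_abc}, $(a_j,b_j,c_j)$ is the third column of $B_3(k_{n-j+1})\cdots B_3(k_n)$. This holds because the recursions are exactly left multiplication by $A_3(k_{n-j})$, respectively $B_3(k_{n-j})$.

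The hypothesis says that a terminal block of the word is $A_3(2)^{m_0}$, that is, the last $m_0$ indices $k_i$ equal $2$. Reading the statement with this block being exactly the product at the stage $j_0$ (so $j_0=m_0$, up to the off-by-one in how the block is labelled), we get the following identifications:
\[
(x_{j_0},y_{j_0},z_{j_0})^{\top} = \text{second column of } A_3(2)^{m_0},
\qquad
(a_{j_0},b_{j_0},c_{j_0})^{\top} = \text{third column of } B_3(2)^{m_0}.
\]

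Next I set $\alpha=\max_{i,j}(A_3(2)^{m_0})_{ij}$ and $\beta=\min_{i,j}(B_3(2)^{m_0})_{ij}$. By \Cref{lemma:PF}, $\beta\ge M'\alpha$. Every entry of the $A$-column is at most $\alpha$, so $z_{j_0}\le\alpha$, $y_{j_0}+z_{j_0}\le 2\alpha$ and $x_{j_0}+y_{j_0}\le 2\alpha$. Every entry of the $B$-column is at least $\beta$. Hence:
\[
a_{j_0}\ge\beta\ge M'\alpha\ge \tfrac{M'}{2}z_{j_0},\qquad
b_{j_0}\ge \beta\ge \tfrac{M'}{2}(y_{j_0}+z_{j_0}),\qquad
c_{j_0}\ge \tfrac{M'}{2}(x_{j_0}+y_{j_0}).
\]
This is~\eqref{eq:k-domination} at $j_0$ with $M=M'/2$. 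Since $M'>2$, we have $M>1$, so the hypothesis $M\ge1$ of \Cref{lemma:inductiononrecursion} is met and that lemma can then be applied.

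The only delicate point is the bookkeeping of the indices, since the recursions run from the rightmost matrix to the left. One must check that the block $A_3(2)^{m_0}$ in the statement is precisely the product producing the index-$j_0$ vectors. If it is labelled with one extra factor, the same argument applies verbatim to the corresponding index. There is also the subtlety that \Cref{lemma:PF} is stated for the matrices $A_3(2)^{m_0}$, $B_3(2)^{m_0}$ themselves, which is exactly what the identification above uses. Everything else is immediate.
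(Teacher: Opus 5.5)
Your proof is correct and is essentially the paper's argument. You identify the index-$j_0$ vectors with the second column of $A_3(2)^{m_0}$ and the third column of $B_3(2)^{m_0}$, apply \Cref{lemma:PF} entrywise, and bound each two-term sum by $2\max_{i,j}(A_3(2)^{m_0})_{ij}$, which is exactly where $M = M'/2$ comes from (the paper's intermediate display writes $M'$ there before concluding with $M'/2$), and the off-by-one in the indexing that you flag is genuinely present in the statement.
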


\begin{proof}
	By \Cref{lemma:PF}, every entry of of $A_3(k_{n-j_0}) \cdots A_3(k_n)$ is less
	than $1/M'$ every entry of $B_3(k_{n-j_0}) \cdots B_3(k_n)$. Then,
	\[
		a_{j_0} \ge M'z_{j_0}, \qquad b_{j_0} \ge M'(y_{j_0} + z_{j_0}), \qquad c_{j_0} \ge M'(x_{j_0} + y_{j_0}).
	\] 
	In other words,~\eqref{eq:k-domination} holds for $j = j_0$ and $M = M'/2$.
\end{proof}

We are now ready to finish the proof of \Cref{prop:LyapunovExpsAsIn}.

\begin{proof}[Proof of~\Cref{prop:LyapunovExpsAsIn}]
	Fix $n\ge 1$. We begin by strengthening \Cref{lemma:A2inA}. We observe that,
	if in the sequence $A_3(k_1), \dotsc, A_3(k_n)$ the matrix $A_3(2)^{m_0}$
	appears $d$ times, then we have, by \Cref{lemma:inductiononrecursion}, that
	\[
		\cnorm{A_3(k_1) A_3(k_2) \cdot A_3(k_n)} \le \frac{1}{M^d}\cnorm{B_3(k_1) B_3(k_2) \cdot B_3(k_n)}.
	\]

	By the ergodic theorem, for $\mu$-almost every Bruin-Troubetzkoy ITM of
	infinite type, there exists a frequency $f$ with $0<f<1$ such that
	\[
		\frac{1}{N} \abs*{\Set*{1\le m < n\le N \given A_3(k_m)\cdots A_3(k_n) = A_3(2)^{m_0}}} \xrightarrow{N\to\infty} f.
	\]
	Then, for $\mu$-almost every point, we have that
	\begin{multline*}
		\lim_{n\to \infty}{\frac{1}{n} \log{\cnorm{A_3(k_1) A_3(k_2) \cdot A_3(k_n)}} \le -f\log{M}} \\ 
			+ \lim_{n\to\infty}{\frac{1}{n} \log{\cnorm{B_3(k_1) B_3(k_2) \cdot B_3(k_n)}}},
	\end{multline*}
	which implies that $\lambda_1(B_3(\uk)) > \lambda_1 (A_3(\uk))$. 
\end{proof}
In turn,
	by the remarks after the definition of $B_3(k)$, this implies that $\lambda_2
	(A_3(\uk)) > 0$, as we wanted. This completes the proof of \Cref{prop:LyapunovExpsAs}.

\section{The geometrical interpretation of the
cocycles}\label{sec:bandcomplexes}

In the previous section we dealt, following~\cite{BruinRadinger} with two
cocycles and the relationship between them was admittedly a bit mysterious. The
aim of this section is to give a geometric interpretation of these cocycles, and
clarify the relationship between them. In fact, we will show that they are the
cocycles responsible for the changes in the heights and width of a band complex
associated to the BT ITM $T$ under the natural extension of the induction
$\rauzy$ introduced in~\cite{AHS}. Moreover, we will show that, at least in the
simpler case when the induction is periodic, the positivity of the second
Lyapunov exponent of $A$ stems from the geometrical fact that the area of the
band complex tends to $0$ under the induction.

\subsection{Band complexes for BT ITMs}
Interval translation mappings can be seen as a special type of \emph{system of
isometries}, see~\cite{GLP}. We begin by recalling how one can associate a
suspension complex to a BT ITM $T$, following the more general construction of
\emph{band complexes} as explained in~\cite{GLP, BF}. This can be seen as the
analogue of Veech's zippered rectangles~\cite{Veech:ue} for ITMs.

\begin{figure}[t]
	\centering
	\includegraphics[width=.8\textwidth]{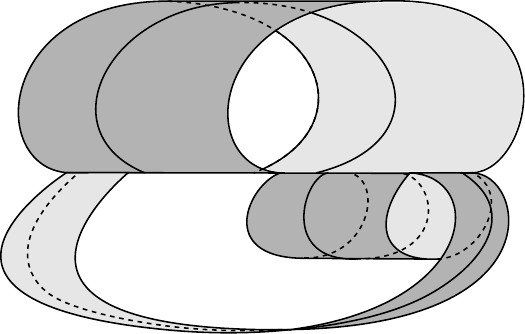}
	\caption{A topological picture of the band complex for the BT ITM in
	\Cref{fig:BT_ITM}. We also show, roughly in the middle of each band, one
	vertical leaf for each band.}
	\label{fig:band_complex}
\end{figure}

Given a BT ITM $T$ we associate a foliated $2$-complex $\Sigma$. We start with
the intervals of continuity $I_1$, $I_2$ and $I_3$, which are foliated by
points. Consider the rectangles $I_i \times [0,1]$, foliated by the ``vertical''
lines $\{x\} \times [0,1]$. We obtain $\Sigma$ by gluing $I_i \times \{0\}$ to
the interval $I_i \subseteq [0,1)$ and $I_i \times \{1\}$ to $I_j\subseteq
[0,1)$ if $T(I_i) = I_j$, see \Cref{fig:band_complex}. We remark that we could
also start with bands of different \emph{heights}, given by $h = (h_1, h_2,
h_3)\in\RR_{> 0}^3$, instead of considering them all of height $1$, and apply
the same procedure.

As we will now explain, the induction $\rauzy$ can be extended to an induction
on the band complex, which we denote by $\widetilde{\rauzy}$. This is often
called the \emph{Rips machine} in the more general context of band complexes
associated to $\RR$-trees~\cite{GLP}. Since the induction $\rauzy$ corresponds
to consider the first return map of $T$ to a subinterval of the original
interval, we can follow the vertical foliation inside the band complex $\Sigma$
until we return to the subinterval. Then, the induction $\widetilde{\rauzy}$
amounts to cutting vertically the appropriate band and stacking it on top of
another one. Let us explain it in more detail.

Following the explanation in~\cite[Section~2.2]{AHS} we have to consider two
cases.

\begin{figure}[bt]
	\centering
	\includegraphics[width=.9\textwidth]{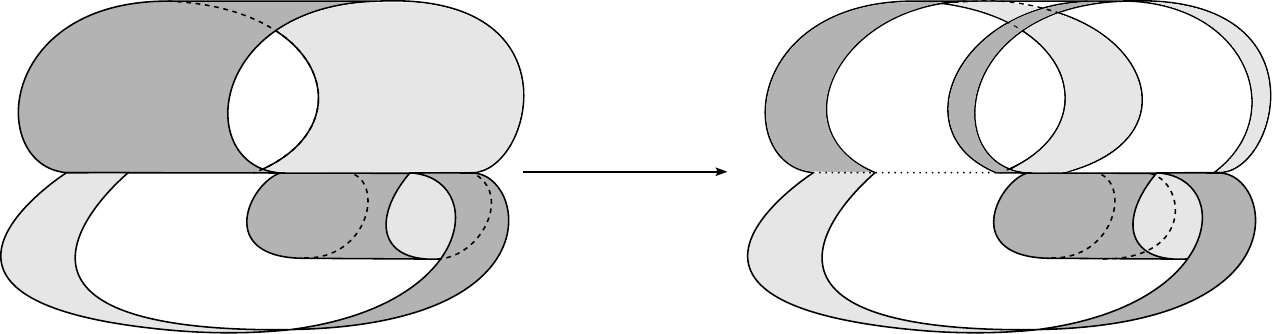}
	\caption{The first case of the induction $\widetilde{\rauzy}$ for the band
	complex. Remark that the pointed leftmost horizontal segment in the right
	hand drawing is not part of the domain of the induced transformation.}
	\label{fig:band_complex_case_1}
\end{figure}

If $\lambda_1 = \abs{I_1} > \lambda_2 + \lambda_3 = \abs{I_2} + \abs{I_3}$, then the points
in the first two intervals return to the interval we are inducing on, which
corresponds to $[\lambda_2+\lambda_3)$, after one iteration of $T$. In this
case, the heights of their bands does not change. However, points in the third
interval of continuity return only after applying $T^2$. Hence, they need to
travel along the vertical foliation inside the third band, and then inside the
first one, see \Cref{fig:band_complex_case_1}. Summing up, if we denote by $h =
(h_1, h_2, h_3)$ the original heights and by $h' = (h'_1, h'_2, h'_3)$ the
heights after the induction, we have that
\[
	\begin{split}
		h_1' &= h_1, \\
		h_2' &= h_2, \\
		h_3' &= h_1 + h_3. \\
	\end{split}
\]
In other words, $h' = V_A h$, where $V_A$ is given by
\[
	V_{A} = \begin{pmatrix}
		1 & 0 & 0 \\
		0 & 1 & 0 \\
		1 & 0 & 1
	\end{pmatrix}.
\]
We stress that the non negative matrix $V_A$ expresses the new heights in terms of
the old ones, contrary to what happened for the matrix $A$ associate to the
lengths in~\cite{AHS}.

\begin{figure}[bt]
	\centering
	\includegraphics[width=.9\textwidth]{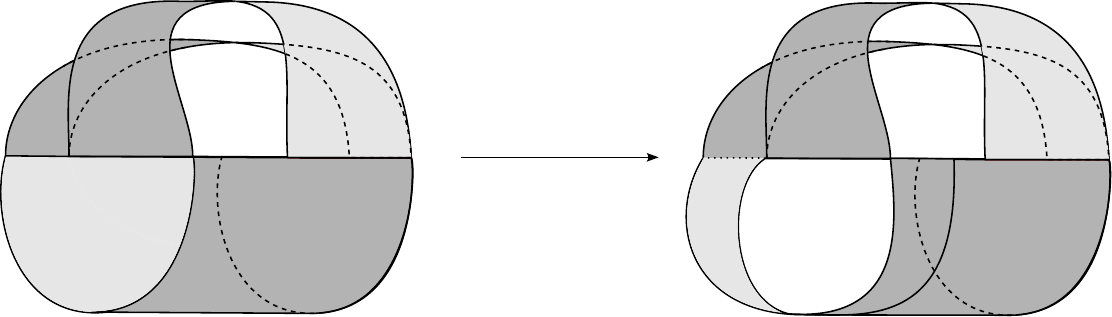}
	\caption{The second case of the induction $\widetilde{\rauzy}$ for the band
	complex. Remark that the pointed leftmost horizontal segment in the right
	hand drawing is not part of the domain of the induced transformation.}
	\label{fig:band_complex_case_2}
\end{figure}

The remaining case is if $\lambda_1 < \lambda_3$. In this case, we induce on the
subinterval $[\lambda_1, 1)$, and the third interval of continuity of $T$ splits
into two new intervals. The ``new'' interval is the one formed by the points
whose image under $T$ is inside $[0,\lambda_1)$, which was the base of the first
band. Hence, these points travel vertically inside the third band and the first
one of the original bands, see \Cref{fig:band_complex_case_2}. In summary, we
have
\[
	\begin{split}
		h_1' &= h_1 + h_3, \\
		h_2' &= h_2, \\
		h_3' &= h_3. \\
	\end{split}
\]
In other words, $h' = V_{C_A}h$, where $V_{C_A}$ is given by
\[
	V_{C_{A}}  = \begin{pmatrix}
		1 & 0 & 1 \\
		0 & 1 & 0 \\
		0 & 0 & 1
	\end{pmatrix}.
\]

\subsection{The acceleration of the induction and the geometric interpretation}
First we recall the acceleration of the induction $\rauzy$ described
in~\cite{AHS}, which we will denote by $\zorich$. Using the terminology
of~\cite{AHS}, we take the product $A^{k-1} C_A$, and make the relabeling $1
\leftrightarrow 2$. We stress that the matrix $A$ is the one defined
in~\cite{AHS} and \emph{not} the matrix $A_3$ in~\cref{eq:BR_Ak}. Denoting with
$P_3$ the permutation matrix corresponding to the relabeling $1 \leftrightarrow
2$, we obtain the matrix
	\begin{equation}\label{eq:zorich_lengths}
		Z_3(k) = A^{k-1} C_A P_3 = \begin{pmatrix}
			k-1 & k & k-1 \\
			1 	& 0 & 0 \\
			0 	& 1 & 1
		\end{pmatrix}.
	\end{equation}
This is one step of the acceleration induction $\zorich$. 

We have the following

\begin{lemma}\label{lemma:BT_conjugation}
	The matrices $Z_3(k)$ and $B_3(k)$, defined respectively in
	\cref{eq:zorich_lengths,eq:BR_cocycle}, are conjugated by a matrix which is
	independent of $k$. In particular, they have the same eigenvalues.
\end{lemma}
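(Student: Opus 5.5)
The plan is to find the conjugating matrix explicitly, by reducing the problem to a finite linear system. Both families are affine in $k$. We can write
\[
Z_3(k) = Z_0 + k Z_1, \qquad B_3(k) = B_0 + k B_1,
\]
where
\[
Z_1 = \begin{pmatrix} 1 & 1 & 1\\ 0&0&0\\ 0&0&0\end{pmatrix} = e_1 (1,1,1), \qquad
B_1 = \begin{pmatrix} 0&0&0\\ 0&0&0\\ 1&0&1\end{pmatrix} = e_3 (1,0,1),
\]
and $Z_0 = Z_3(0)$, $B_0 = B_3(0)$. A single invertible $P$ with $P Z_3(k) = B_3(k) P$ for all $k$ exists if and only if $P$ satisfies the two $k$-independent equations $P Z_0 = B_0 P$ and $P Z_1 = B_1 P$. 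Comparing coefficients of $k$ shows this equivalence. So the statement reduces to solving a homogeneous linear system of $18$ equations in the $9$ entries of $P$, and then checking that some solution is invertible.

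First I would exploit the rank-one structure of $Z_1$ and $B_1$ to cut down the unknowns. The equation $P e_1 (1,1,1) = e_3 (1,0,1) P$ forces $P e_1$ to be a multiple of $e_3$. It also forces the row vector $(1,0,1)P$ to be the same multiple of $(1,1,1)$. After normalising, we get $P e_1 = e_3$ and $(1,0,1)P = (1,1,1)$. This already fixes the first column of $P$ and imposes two more linear conditions.

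Next I would use the structure of $Z_0$ to guess a good basis. One has $Z_3(k)(e_2 - e_3) = e_1$, and $B_3(k) e_2 = e_1$. This suggests choosing the columns of $P$ so that $P(e_2 - e_3) \propto e_2$, or more generally matching the cyclic chains $e_1 \mapsto \cdots$ of the two matrices. I would then substitute the remaining unknowns into $P Z_0 = B_0 P$ and solve the resulting small system by hand. Finally I would exhibit $P$ and verify the identity $P Z_3(k) P^{-1} = B_3(k)$ directly for general $k$. That the eigenvalues coincide then follows immediately.

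The main obstacle I expect is that the conjugacy might not hold in the literal form $P Z_3(k) P^{-1} = B_3(k)$. It may instead hold only up to transposition, or up to the kind of sign and relabeling twist used in defining $B_3$ from $A_3$ through $O$. If the system above has only singular solutions, I would redo the same computation for $P Z_3(k) P^{-1} = \transpose{B}_3(k)$, whose $k$-part is $(1,0,1)^{\top} e_3^{\top}$. Since a matrix and its transpose are always similar, the eigenvalue conclusion is unaffected. Should both literal versions fail, I would fall back to matching the characteristic polynomials. I would compute $\det(tI - Z_3(k))$ and $\det(tI - B_3(k))$, which is routine and should show both equal $t^3 - k t^2 - \cdots$. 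Because the companion-like structure makes both matrices cyclic for every $k$, the rational canonical form would then give a conjugacy, though a priori one depending on $k$. To recover $k$-independence, I would take the two companion-basis change matrices and check that they are constant in $k$.
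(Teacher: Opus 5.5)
Your reduction is sound. Both families are affine in $k$, so a constant $P$ with $PZ_3(k)=B_3(k)P$ for all $k$ is the same as a solution of $PZ_0=B_0P$ and $PZ_1=B_1P$. The rank-one argument correctly forces, after scaling, $Pe_1=e_3$ and $(1,0,1)P=(1,1,1)$.

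\textbf{Gap.} The proposal stops exactly where the content of the lemma lies: you never exhibit $P$, and the one concrete hint you give for finishing is wrong. If $PZ_3(k)=B_3(k)P$ and $Pe_1=e_3$, then
\[
B_3(k)P(e_2-e_3)=PZ_3(k)(e_2-e_3)=Pe_1=e_3 .
\]
Hence $P(e_2-e_3)$ must equal $B_3(k)^{-1}e_3=e_3-e_1$. It cannot be a multiple of $e_2$, which $B_3(k)$ sends to $e_1$. Imposing your ansatz $P(e_2-e_3)\propto e_2$ together with the forced condition $Pe_1=e_3$ leaves no invertible solution. That is presumably what would push you toward your fallbacks, which are unnecessary.

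The fallbacks would not prove the lemma as stated anyway. A constant conjugacy to $B_3(k)^{\top}$ is not a constant conjugacy to $B_3(k)$. The cyclic-basis matrices $[v,Mv,M^2v]$ each depend on $k$; for instance $Z_3(k)e_1=(k-1,1,0)$. So "checking that they are constant" fails, and only their product can be constant.

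\textbf{Completion.} The literal conjugacy does hold, and your system closes in two lines once the chains are matched correctly. With $Pe_1=e_3$, the first column of $PZ_3(k)=B_3(k)P$ gives $(k-1)e_3+Pe_2=B_3(k)e_3=(0,1,k)$, so $Pe_2=e_2+e_3$. Then $Pe_3=Pe_2-(e_3-e_1)=e_1+e_2$. Thus
\[
P=\begin{pmatrix}0&0&1\\0&1&1\\1&1&0\end{pmatrix},\qquad \det P=-1 .
\]
The remaining columns check:
\[
B_3(k)(e_2+e_3)=(1,1,k)=P(ke_1+e_3)=PZ_3(k)e_2,
\]
\[
B_3(k)(e_1+e_2)=(1,1,k-1)=P\bigl((k-1)e_1+e_3\bigr)=PZ_3(k)e_3 .
\]
This is exactly the paper's proof. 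The paper states that $Z_3(k)$ is $B_3(k)$ written in the basis $\{e_3,\,e_2+e_3,\,e_1+e_2\}$ and verifies the three images directly.
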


\begin{proof}
	A direct computation yields
	\[
		\begin{split}
			B_3(k) \cdot (1, 1, 0) &= (1, 1, k-1), \\
			B_3(k) \cdot (0, 1, 1) &= (1, 1, k), \\
			B_3(k) \cdot (0, 0, 1) &= (0, 1, k).
		\end{split}
	\]
	Then, the matrix $Z_3(k)$ is simply the matrix $B_3(k)$ written in the base
	given by $\{e_3, e_2+e_3, e_1+e_2\}$, where $\{e_1, e_2, e_3\}$ is the
	standard base of $\RR^3$. 
\end{proof}

We now consider the acceleration of $\widetilde{\rauzy}$ defined analogously to
its counterpart $\rauzy$: we apply the first case $k-1$ times, followed by one
iteration of the second case, and finally we relabel the bands so that they are
always labeled $1$ to $3$ from left to right. This relabeling amounts to the
relabeling $1 \leftrightarrow 2$, and denote by $P_3$ its associated matrix, as
before. We denote this acceleration $\widetilde{\zorich}$. Hence, we have
\[
	h' = \widetilde{\zorich} h = P_3 V_{C_A} V_{A}^{k-1} h = \widetilde{Z}_3(k) h,
\]
where 
\[
\widetilde{Z}_3(k) = \begin{pmatrix}
		0	& 	1	&	0 \\
		k	&	0	&	1 \\
		k-1	&	0	&	1
	\end{pmatrix}.
\]
We remark that $\widetilde{Z}_3(k) = \transpose{A_3(k)}$, where
$\transpose{\cdot}$ denotes the transpose matrix. Hence, we have showed that
both cocycles of \Cref{sec:BT_ITMs} have a geometric interpretation. 

Let us now give a geometric explanation of the positivity of the second Lyapunov
exponent of the cocycle $A_3(\uk)$. We begin with a technical definition.

\begin{definition}\label{def:MatrixOrder}
	Let $A = (a_{ij})_{i,j=1}^n$ and $Z = (z_{ij})_{i,j=1}^n$ two $n\times n$
	integer matrices. We say that $A < Z$ if, for all entries we have that
	$a_{ij} \le z_{ij}$ and there exists at least one entry $\overline{i},
	\overline{j}$ such that $a_{\overline{i}, \overline{j}} < z_{\overline{i},
	\overline{j}}$.
\end{definition}

We consider ITMs such that the induction described above is pre-periodic. One
can easily check that this means that there exists a subinterval of the original
support interval such that the induced map on this subinterval is a scaled-down
version of the original ITM. Then, the ITM is \emph{self-similar} as a system of
isometries, in the sense of Dynnikov-Skripchenko~\cite{DS}. 

So, the following statement holds: 
\begin{proposition}\label{prop:periodicrips}
	Let us consider a self-similar Bruin-Troubetzkoy ITM and the corresponding
	induction cocycle $A(\uk)$. Then, this cocycle has strictly positive second
	Lyapunov exponent. 
\end{proposition}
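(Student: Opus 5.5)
For a self-similar BT ITM the sequence $(k_i)$ is eventually periodic, with period $(k_{p+1},\dots,k_{p+q})$. It therefore suffices to study the single period matrix $Z = Z_3(k_{p+1})\cdots Z_3(k_{p+q})$, or equivalently its $A_3$ version; the preperiodic part is irrelevant for Lyapunov exponents. The exponents of the cocycle are then $\frac1q\log|\sigma_i|$, where $\sigma_1,\sigma_2,\sigma_3$ are the eigenvalues of $Z$. Since $\det Z=\pm1$ and $Z$ is eventually positive (Perron–Frobenius), $\sigma_1>1$ and $|\sigma_1\sigma_2\sigma_3|=1$. The claim is thus equivalent to $|\sigma_3|<1$ with $|\sigma_2|>1$, i.e. that $Z$ has exactly one eigenvalue inside the unit disc (and none on the unit circle).

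The argument I would use is the geometric one suggested by the band complex. By \Cref{lemma:BT_conjugation} and the identity $\widetilde{Z}_3(k)=\transpose{A_3(k)}$, the lengths evolve under the product of the $Z_3(k)$ and the heights under the product of the $\widetilde Z_3(k)$. Heights are governed by the transposes of $A_3$, so their exponents are those of $A_3(\uk)$, whose top exponent is $\lambda_1(A_3(\uk))$. Lengths are governed by matrices conjugate to $B_3(k)$, so their growth is $\lambda_1(B_3(\uk))$, and under induction they shrink at that rate. Self-similarity means that after one period the induced band complex is a copy of the original, with widths scaled by $1/\rho$, where $\rho$ is the Perron eigenvalue of the length matrix. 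The heights are then multiplied by the Perron eigenvalue $\rho'$ of the height matrix, because the normalized height vector is its Perron eigenvector.

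The key step is to show that the area of the band complex strictly decreases under one period of $\widetilde{\zorich}$. Area is $\sum_i\lambda_ih_i$, and it is not preserved: in both elementary moves of $\widetilde{\rauzy}$, the part of the base that is not in the image (the pointed segment in \Cref{fig:band_complex_case_1,fig:band_complex_case_2}) is discarded together with the column above it. This is exactly where the ITM differs from an IET. At the matrix level, the length-then-height pairing gives $\innprod{Z^{-1}\lambda}{\transpose{A}h}<\innprod{\lambda}{h}$ for positive vectors, where $A$ is the period product of the $A_3$'s. Equivalently, the matrix implementing the pairing is strictly smaller in the order of \Cref{def:MatrixOrder} than the one an area-preserving (IET-like) induction would give. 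Applying this to the Perron eigenvectors yields $\rho'/\rho<1$, hence $\lambda_1(A_3(\uk))<\lambda_1(B_3(\uk))$. The remark following the definition of $B_3(k)$ then gives $\lambda_2(A_3(\uk))=-\lambda_1(B_3(\uk))-\lambda_3(B_3(\uk))\cdots>0$, i.e. positivity via $\lambda_2(A_3)=-(\lambda_1(A_3)+\lambda_3(A_3))=\lambda_1(B_3)-\lambda_1(A_3)>0$.

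The main obstacle is making the area comparison rigorous. I would write the elementary moves $V_A$ and $V_{C_A}$ for heights alongside the corresponding length matrices of \cite{AHS}. For each move I would check that the new area equals the old area minus the positive term (lost width)$\times$(height), which gives a strict inequality at every cycle where $k>1$ or a $C_A$ step occurs, and hence at least once per period. If this direct bookkeeping becomes messy, the fallback is to compare $A$ and the conjugate of $B$ entrywise in the sense of \Cref{def:MatrixOrder}, as in \Cref{lemma:PF}. Strict domination $A<$ (conjugate of $B$) between primitive matrices gives a strict inequality of Perron roots by Perron–Frobenius, and that is exactly $\lambda_1(A_3)<\lambda_1(B_3)$ in the periodic case.
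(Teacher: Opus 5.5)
Your argument is essentially the paper's, and your ``fallback'' is literally the paper's proof. The area defect over one period is $\langle\lambda,h\rangle-\langle\overline Z^{-1}\lambda,\overline A^{\top}h\rangle=\langle(\overline Z-\overline A)\lambda^{(q)},h\rangle$ with $\lambda^{(q)}=\overline Z^{-1}\lambda$. So ``area decreases'' is exactly the entrywise domination $\overline A\le\overline Z$ inherited from $A_3(k)\le Z_3(k)$. Evaluating at the Perron eigenvectors is the standard proof of the strict monotonicity of the Perron root (Collatz--Wielandt), which the paper quotes. Your ``main obstacle'' needs no bookkeeping: by inspection $Z_3(k)-A_3(k)=(k-1)E_{11}$.

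Three points need correcting.

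\emph{The opening reformulation concerns the wrong matrix.} $Z_3(k)$ is conjugate to $B_3(k)$, so $\overline Z$ has the inverse spectrum of $\overline A$; indeed $Z_3(k)$ has characteristic polynomial $x^3-kx^2-x+1$, the reciprocal of $x^3-x^2-kx+1$. The claim is therefore that $\overline A$ has exactly one eigenvalue inside the unit disc, equivalently that $\overline Z$ has exactly one outside. As you wrote it, you would be asserting that the length cocycle has positive second exponent, which is false. Your first expression $-\lambda_1(B_3)-\lambda_3(B_3)$ also has the wrong sign; the second one is correct.

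\emph{The area bookkeeping is wrong for the $C_A$ move.} When $\lambda_1<\lambda_3$, the discarded interval $[0,\lambda_1)$ lies inside $T(I_3)=[0,\beta)$. The new area $\lambda_1(h_1+h_3)+\lambda_2h_2+(\lambda_3-\lambda_1)h_3$ equals the old one. Only the first-case move loses area, namely $\lambda_2h_1$: the gap $[\beta,\alpha)$ of $T(I)$ sits under band $1$. Every accelerated step contains one $C_A$ move, so your criterion would predict a strict loss at every step. In fact the loss in one accelerated step is $(k-1)\lambda^{(1)}_1h_1$, which vanishes for $k=1$, where $A_3(1)=Z_3(1)$.

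\emph{Strictness over a period needs a hypothesis you omit.} The paper states it explicitly in its proof: the period product $\overline A$ must be positive, i.e.\ not all even-indexed, nor all odd-indexed, $k_i$ of a period equal $1$. This is what justifies your unproved claim that $Z$ is eventually positive. It also makes both Perron eigenvectors strictly positive, so that $\langle(\overline Z-\overline A)\lambda^{(q)},h\rangle>0$. It is not automatic for the cocycle: for the period $(2,1)$, $\overline A=A_3(2)A_3(1)$ is reducible with spectrum $\{\tfrac{3\pm\sqrt5}{2},1\}$, so $\lambda_2=0$.
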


\begin{proof} With the above \Cref{def:MatrixOrder}, it is clear that $A_k <
Z_k$. Let us now assume that the induction $\rauzy$ is periodic, meaning that
there exists an $n_0$ such that  
$k_{ln_0 + j} = k_j$, for all $l\in\NN$ and $0< j < n_0$. Moreover, we assume
that the matrix of the cocycle $A_3(\uk)$ associated to a period $A_3(k_1)
\cdots A_3(k_{n_0}) = \overline{A}$ is \emph{positive}. This is equivalent to
the assumption that not all the even or odd $k_i$ in one period are equal to
$1$. This implies that also $Z_3(k_1) \cdots Z_3(k_{n_0}) = \overline{Z}$ is
positive and that we have $\overline{A} < \overline{Z}$. Hence, the
Perron-Frobenius eigenvalue of $\overline{A}$ is strictly less that the
corresponding one of $\overline{Z}$ (this is a classical corollary of
Collatz-Wieland formula). Since the induction is periodic, the same inequality
holds for the Lyapunov exponents of the respective cocycles. 

Now, let 
\[
	h^{(n)} = \widetilde{\zorich}^{(n)} h = \transpose{(A_3(k_1) \cdots
A_3(k_n))} \cdot h
\]
and
\[
 \lambda^{(n)} = (\zorich^{(n)})^{-1} \lambda = (Z_3(k_1) \cdots Z_3(k_n))^{-1} \cdot\lambda
\]
respectively the heights and lengths of the band complex after $n$ steps of the
accelerated induction. The area of the band complex at step $n$ is given by
\[
	\innprod*{h^{(n)}}{\lambda^{(n)}} = \innprod*{\transpose{(A_3(k_1) \cdots
A_3(k_n))} \cdot h}{(Z_3(k_1) \cdots Z_3(k_n))^{-1} \cdot\lambda},
\]
where $\innprod{\cdot}{\cdot}$ denotes the standard inner product in
$\RR^3$. Since both vectors $h$ and $\lambda$ are positive, by the previous
discussion, we have that the former gets expanded by the first exponent of
$A_3(\uk)$ while the latter gets contracted by the first exponent of
$Z_3(\uk)$, which, by \Cref{lemma:BT_conjugation}, coincidences with the of
$B_3(\uk)$. Hence, the positivity of the second Lyapunov exponent of
$A_3(\uk)$ corresponds to the geometric fact that the area of the band complex
tends to $0$ under the induction.
\end{proof}

\section{A criterion for positivity of the second Lyapunov
exponent}\label{sec:criterion} 

In this section, we give a general criterion for positivity of the second
Lyapunov exponent of a cocycle, which generalizes the strategy employed in
\Cref{sec:BT_ITMs}. In the next sections, we will use this abstract criterion to
show that another family of ITMs is weak mixing. We believe that the strategy is
sufficiently robust to be used in more general context, like multidimensional
continued fraction algorithms. We plan to pursue such applications in future
work.

\subsection{The setup}
Let $(X,T)$ a dynamical system and $(X_i)_{i\in I}$ be a countable Markov
partition of the space $X$ associated to $T$. We consider two locally constant
cocycles $A$ and $Z$ over $T$, both taking values in $\SL(d,\NN)$ for some
integer $d\ge 2$. More precisely, we ask that the cocycles are constant on each
atom $X_i$ of the Markov partition. 
We assume, moreover, that $A(x) < Z(x)$, for all $x\in X$, in the sense
of \Cref{def:MatrixOrder}. Finally, assume that there exists a loop such that,
for some $n$, $x$ and $T^{n-1}(x)$ belong to the same $X_i$ and the cocycle
$A(x) \cdots A(T^{n-1}(x))$ has a positive matrix.

Fix a norm $\norm{\cdot}$ on $\SL(d,\NN)$ and let $\mu$ be an ergodic probability
Gibbs measure with full support, with respect to which the cocycles $A$ and $Z$
are log-integrable, so that we can apply Oseledets' theorem. We denote by
$\lambda_1 (A)$ and $\lambda_1(Z)$ the top Lyapunov exponents of the cocycle $A$
and $Z$ respectively.

\begin{theorem}\label{inequalityonPF}
	Under the above assumptions, the top Lyapunov exponent of the cocycle $A$ is
	strictly smaller than the top Lyapunov exponent of the cocycle $Z$, with
	respect to $\mu$:
	\[
		\lambda_1(A) < \lambda_1 (Z).
	\]
\end{theorem}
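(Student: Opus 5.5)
The plan is to generalize the argument of \Cref{sec:BT_ITMs}, and in particular the combination of \Cref{lemma:PF} with \Cref{lemma:inductiononrecursion}. We use the fact that products of nonnegative matrices are monotone for the entrywise order. The norm is chosen as $\norm{X}=\sum_{i,j}\abs{x_{ij}}$; all norms are equivalent, so this does not change the Lyapunov exponents. The idea has three parts: first find a finite block along which $A$ is uniformly smaller than $Z$ by a multiplicative factor; then show that every occurrence of this block costs a fixed factor; finally use ergodicity to see that the block occurs with positive frequency.

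\emph{Step 1: a strict multiplicative gap on a block.} Let $W$ be the loop from the hypothesis, of length $n$. Write $A_W = A(x)\cdots A(T^{n-1}x)$ and $Z_W = Z(x)\cdots Z(T^{n-1}x)$. Then $A_W$ is positive, and since each factor satisfies $A\le Z$ entrywise, $A_W\le Z_W$. Moreover $Z_W-A_W\ge 0$ is nonzero: expand the telescoping sum and note that the strict entry of one factor gets multiplied by nonnegative matrices whose diagonal-type contributions are at least those of the $A$'s. If needed, I would instead argue with the cube to avoid this subtlety. Since $A_W$ is positive and $D=Z_W-A_W$ is nonnegative and nonzero,
\[
Z_W^3-A_W^3 = Z_W^2D + Z_W D A_W + D A_W^2 \ge A_W D A_W,
\]
and $A_W D A_W$ is a strictly positive integer matrix, so all its entries are $\ge 1$. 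Let $C=\max_{ij}(A_W^3)_{ij}$ and $\delta = 1/C$. Then $Z_W^3\ge(1+\delta)A_W^3$ entrywise. Let $U$ denote the word $W^3$. Because $x$ and $T^{n-1}x$ lie in the same Markov atom, the concatenation $W^3$ is admissible and its cylinder $[U]$ is nonempty; since $\mu$ has full support, $\mu([U])>0$. This step plays the role of \Cref{lemma:PF}.

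\emph{Step 2: propagation.} Take $x$ and $N$, and choose a maximal family of $d_N(x)$ pairwise disjoint occurrences of $U$ in the itinerary of $x$ up to time $N$. Write $A(x)\cdots A(T^{N-1}x)$ as a product of blocks. Replace each $U$-block $A_U$ by $(1+\delta)^{-1}Z_U$, and every other factor $A(T^ix)$ by $Z(T^ix)$. Each replacement only increases entries, and all matrices involved are nonnegative, so by monotonicity of products
\[
\norm{A(x)\cdots A(T^{N-1}x)} \le (1+\delta)^{-d_N(x)}\norm{Z(x)\cdots Z(T^{N-1}x)}.
\]
This replaces the more delicate recursion of \Cref{lemma:inductiononrecursion}: there entrywise domination was not available directly, because of the conjugation by $O$, whereas here it is part of the hypotheses.

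\emph{Step 3: frequency.} By Birkhoff's theorem, the number of times $i<N$ with $T^ix\in[U]$ is $\sim \mu([U])N$ for $\mu$-a.e.\ $x$. A greedy selection of disjoint occurrences keeps at least a $1/\abs{U}$ fraction of them, so $\liminf d_N/N\ge \mu([U])/(3n)>0$. Taking $\frac1N\log$ in Step 2 and applying Oseledets' theorem (log-integrability is assumed) gives
\[
\lambda_1(A)\le \lambda_1(Z)-\frac{\mu([U])}{3n}\log(1+\delta)<\lambda_1(Z).
\]
I expect the main obstacle to be Step 1. We must produce a gap that is genuinely multiplicative and uniform on every entry, from a hypothesis that only gives strictness in one entry per step. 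This is where positivity of $A_W$ is essential, via the sandwich $A_W D A_W$. We must also make sure that the block can be repeated legally inside the Markov structure.
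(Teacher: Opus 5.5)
Your argument is correct and is essentially the paper's: the paper also raises the positive loop product to a power $p\ge 3$ and sandwiches the strict entry between positive factors to get a uniform gap $\overline{A}_{ij}\le K^{-1}\overline{Z}_{ij}$. It then loses a factor $K$ at each occurrence of the repeated loop, which has positive frequency by ergodicity and the Gibbs property; your bound $Z_W^3-A_W^3\ge A_WDA_W$ with integrality, and your greedy selection of disjoint occurrences, are cleaner versions of the same steps. The only point you leave vague is $D=Z_W-A_W\neq 0$. It follows from invertibility: the first telescoping term $(Z(x)-A(x))\,Z(Tx)\cdots Z(T^{n-1}x)$ is nonnegative and nonzero because the $Z$'s have determinant $1$, and passing to the cube does not by itself remove the need for this.
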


\begin{proof}
	Given a point $x\in X$, we consider its itinerary $(k_i)_{i\in\NN}$
	under the dynamical system $T$ with respect to the Markov partition
	$(X_i)_{i\in I}$. By definition $k_i = j $ if and only if $T^{i}(x) \in
	X_j$. If we denote by $A^{n}(x) = A(x) \cdots A(T^{n-1}(x))$ the product of
	the matrices of the cocycle $A$ under the first $n$ steps of the orbit of
	$x$ under $T$, and similarly for $Z$, by Oseledets' theorem, for
	$\mu$-almost every $x\in X$, we have that
	\[
		\lambda_1 (A) = \lim_{n\to\infty}{\frac{1}{n} \log{\norm{A^n(x)}}}
			\qquad \text{and} \qquad
		\lambda_1 (Z) = \lim_{n\to\infty}{\frac{1}{n} \log{\norm{Z^n(x)}}}. 
	\]
	The assumption that $A < Z$ implies that, for all $n$, and all $x\in X$,
	$\norm{A^n(x)} \le \norm{Z^n(x)}$. In turn, this yields $\lambda_1(A) \le
	\lambda_1 (Z)$. Let us show that, for $\mu$-almost every $x$, the inequality
	is strict.

	By the loop assumption on the cocycle $A$, there exists $k_1, \dotsc,
	k_{n-1}$ such that $T^j(x)\in X_{k_j}$ for $j=1, \dotsc, n-1$ and $A^{n}(x)$
	is a positive matrix and $x$ and $T^{n-1}(x)$ belong to the same atom of the
	Markov partition. Let $p\ge 3$ and denote by $\overline{A} = (A^n(x))^p$. We
	remark that, by the loop assumption, the matrix $\overline{A}$ corresponds
	to a possible matrix for the cocycle $A$. Similarly, let $\overline{Z} =
	(Z(x) \cdots Z(T^{n-1}(x)))^p$, which is also positive by assumption. We
	claim the following

    \begin{lemma}\label{claim}
		There exists a $K > 1$ such that, for all $i,j \in \{1,	\dotsc, d\}$:
		\begin{equation}\label{eq:cocycles_domination}
			\overline{A}_{i,j} \le \frac{1}{K} \overline{Z}_{i,j}.
		\end{equation}
    \end{lemma}
    
	Let us assume this for a moment, and complete the proof. By
	\cref{eq:cocycles_domination}, if the matrix $\overline{A}$ appears $m$
	times in the product $A^n(x)$, then
	\[
		\norm{A^n(x)} \le \biggr(\frac{1}{K}\biggl)^m \norm{Z^n(x)}.
	\]
	By ergodicity of $\mu$, the word $(k_1 \cdots k_{n-1})^p$ appears, for
	$\mu$-almost every $x\in X$, with a frequency $f$, equal to the $\mu$
	measure of the cylinder $[(k_1 \cdots k_{n-1})^p]$. Since $\mu$
	is a Gibbs measure, we have that $0 < f < 1$. Then,
	\[
		\lim_{n\to\infty}{\frac{1}{n} \log{\norm{A^n(x)}}} \le f\cdot\log{\biggl(\frac{1}{K}\biggr)} \lim_{n\to\infty}{\frac{1}{n} \log{\norm{Z^n(x)}}}.
	\]
	Since $f\cdot\log{\bigl(\frac{1}{K}\bigr)}$ is negative, we are done.
    \end{proof}

	It remains only to prove \Cref{claim}.
    \begin{proof}[Proof of \Cref{claim}]
	We have that, for all $i,j\in\{1,\dotsc,d\}$,
	\begin{equation}\label{eq:upper_bound_Aij}
		\overline{A}_{i,j} = \sum_{t_1, \dotsc, t_{p-1} \in \{1,\dotsc, d\}} a_{it_1} a_{t_1t_2} \cdots a_{t_{p-1}j} \le d^{p-1} M^p,
	\end{equation}
	where $M$ is the maximum of the entries of $A^n(x)$. By assumption, $A < Z$,
	so there exists an entry $\overline{i}, \overline{j}$ such that
	$a_{\overline{i}, \overline{j}} < z_{\overline{i}, \overline{j}}$. In other
	words, there exists an $\alpha > 0$ such that $a_{\overline{i},
	\overline{j}} + \alpha \le z_{\overline{i}, \overline{j}}$ Then, we have
	\[
	\begin{split}
		\overline{Z}_{i,j} 	&= \sum_{t_1, \dotsc, t_{p-1} \in \{1,\dotsc, d\}} z_{it_1} z_{t_1t_2} \cdots z_{t_{p-1}j} \\
							&= \sum_{t_3, t_{p-1}\in \{1,\dotsc, d\}} z_{i\overline{i}} z_{\overline{i}\overline{j}} \cdots z_{t_{p-1}j} +  
								\sum_{\substack{t_1, \dotsc, t_{p-1} \in \{1,\dotsc, d\} \\ (t_1, t_2) \neq (\overline{i}, \overline{j})}} z_{it_1} z_{t_1t_2} \cdots z_{t_{p-1}j} \\
							&\ge \sum_{t_3, t_{p-1}\in \{1,\dotsc, d\}} a_{i\overline{i}} (a_{\overline{i}\overline{j}} + \alpha) a_{\overline{j}t_3}\cdots a_{t_{p-1}j} +  
								\sum_{\substack{t_1, \dotsc, t_{p-1} \in \{1,\dotsc, d\} \\ (t_1, t_2) \neq (\overline{i}, \overline{j})}} a_{it_1} \cdots a_{t_{p-1}j} \\
							& \ge \overline{A}_{i,j} + \alpha \sum_{t_3, t_{p-1}\in \{1,\dotsc, d\}} a_{i\overline{i}} a_{\overline{j}t_3}\cdots a_{t_{p-1}j} \\
							& \ge \overline{A}_{i,j} + \alpha m^{p-1} d^{p-3},
	\end{split}
	\]
	where $m$ is the minimum of the entries of the positive matrix $A^n(x)$.
	
	Hence, by \cref{eq:upper_bound_Aij} we have that
	\[
		\overline{Z}_{ij} \ge \overline{A}_{ij} + \frac{\alpha d^{p-3} m^{p-1}}{d^{p-1}M^p} d^{p-1} M^p 
			\ge \overline{A}_{ij} + \beta \overline{A}_{ij},
	\]
	where $\beta = \frac{\alpha d^{p-3} m^{p-1}}{d^{p-1}M^p}$. Choosing $K = 1 +
	\beta$, we have proved the claim. 
\end{proof}

\begin{remark}
Applying \Cref{claim} to Bruin-Troubetzkoy ITMs we get a short proof of the
estimation on the Lyapunov exponents that was a key ingredient of the proof of
weak mixing property for linearly recurrent ITMs obtained by Bruin and Radinger
in~\cite{BruinRadinger}.     
\end{remark}

\section{Bruin $4$-ITMs}\label{sec:Bruin_ITMs} 

We now apply the general strategy we explained above on another class of
transformations: the Bruin ITMs. This family was introduced in~\cite{Br} and
generalizes the BT ITMs from a $3$-ITM to an arbitrary number of intervals of
continuity. We will present the case of Bruin ITMs on $4$ intervals in this
section and the return to the general case in \Cref{sec:dBruin_ITMs}. We have
chosen to do so since we believe that the case of Bruin ITMs on $4$ interval (or
$4$-Bruin ITMs, for short) illustrates our method in a simpler setting.

\begin{figure}[tb]
	\centering
	\includegraphics[width=.8\textwidth]{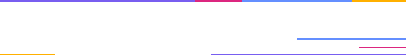}	
	\caption{An example of a Bruin ITM on $4$ intervals. The intervals below are
			images of the ones above, color coded.}
	\label{fig:4BruinITM}
\end{figure}

Let $U_3 = \Set{(\alpha_1, \alpha_2, \alpha_3)\in\RR^3 \given 1 \ge \alpha_1 \ge \alpha_2
\ge \alpha_3 \ge 0}$. Given an $\alpha = (\alpha_1,\alpha_2,\alpha_3)\in U_3$,
we define the Bruin $4$-ITM by
\[
	T_\alpha(x) = \begin{cases}
		x + \alpha_1, & \text{for } x\in[0,1-\alpha_1),\\
		x + \alpha_2, & \text{for } x\in[1-\alpha_1,1-\alpha_2),\\
		x + \alpha_3, & \text{for } x\in[1-\alpha_2,1-\alpha_3),\\
		x + \alpha_3-1, & \text{for } x\in[1-\alpha_3,1).\\
	\end{cases}
\]
In other words, the first three intervals are moved to the rightmost part of the
unit interval, whereas the last interval is moved to the leftmost part, see
\Cref{fig:4BruinITM} for an example.

\subsection{The $\rauzy$ induction for Bruin $4$-ITMs} 
We define a Rauzy-type induction on this family of ITMs, similar to the one
introduced by Dynnikov for systems of isometries~\cite{D} and which is the
natural generalization of the induction $\rauzy$ introduced in~\cite{AHS} for BT
ITMs. More details about this induction can be found in~\cite{SC}.

We begin by changing parameters above, to make the description of the system
more homogeneous. Let $\lambda = (\lambda_1, \lambda_2, \lambda_3, \lambda_4)$
be given by
\begin{equation}
	\begin{split}
		\lambda_1 &= 1-\alpha_1,\\
		\lambda_2 &= \alpha_1 - \alpha_2,\\
		\lambda_3 &= \alpha_2 - \alpha_3,\\
		\lambda_4 &= \alpha_3.
	\end{split}
\end{equation}
In other words, $\lambda_i$ is the length of the $i$-th interval of continuity
for the Bruin $4$-ITM. We remark that, since $\lambda_1 + \lambda_2 + \lambda_3
+ \lambda_4 = 1$, then $\lambda \in \Delta^3$, where $\Delta^3 = \Set*{(x_1,
x_2, x_3, x_4)\in\RR_{\ge 0} \given \sum_{i=1}^{4}\lambda_i = 1}$ is the
$3$-simplex.

We distinguish three cases.

\begin{figure}[t]
	\centering
	\subfloat[][Case 1.\label{fig:Bruin_induction_case1}]{\includegraphics[width=.45\textwidth]{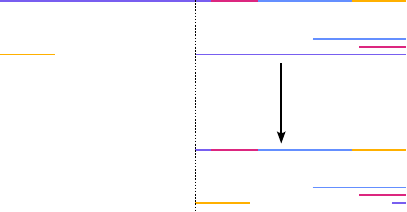}}
	\qquad
	\subfloat[][Case 3.\label{fig:Bruin_induction_case3}]{\includegraphics[width=.45\textwidth]{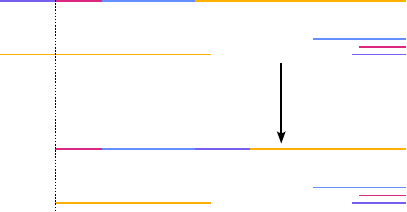}}
	\caption{The two cases of the $\rauzy$ induction for $4$-Bruin not (immediately) yielding a finite type ITM.}
	\label{fig:RauzyInfiniteCases}
\end{figure}

\textbf{Case 1. $\lambda_1 > \lambda_2 + \lambda_3 + \lambda_4$.}
In this case, we induce on the interval $[\lambda_2+\lambda_3+\lambda_4, 1)$. We
have that, the lengths after the induction are related to ones before by
\[
	\begin{split}
		\lambda'_1 &= \lambda_1 - \lambda_2 - \lambda_3 - \lambda_4,\\
		\lambda'_2 &= \lambda_2,\\
		\lambda'_3 &= \lambda_3,\\
		\lambda'_4 &= \lambda_4.
	\end{split}
\]
see \Cref{fig:Bruin_induction_case1}. In this case, the order of the intervals
do not change: after the induction, in the domain the intervals are still
ordered as before the induction itself. If we denote by $\lambda' = (\lambda'_1,
\lambda'_2, \lambda'_3, \lambda'_4)$ the lengths of the intervals of continuity
for the transformation after the induction, we have that $\lambda = D_4
\lambda'$, where $D_4$ is the matrix
\[
	D_4 = \begin{pmatrix}
		1 & 1 & 1 & 1 \\
		0 & 1 & 0 & 0 \\
		0 & 0 & 1 & 0 \\
		0 & 0 & 0 & 1
	\end{pmatrix}.
\]

\textbf{Case 2. $\lambda_4 < \lambda_1 < \lambda_2 + \lambda_3 + \lambda_4$.}
In this case, the ITM can be reduced to one on only $3$ intervals. In other
words, we obtain a BT ITM, see \Cref{fig:Bruin_finite_type}.

\begin{figure}[t]
	\centering
	\includegraphics[height=.4\textheight]{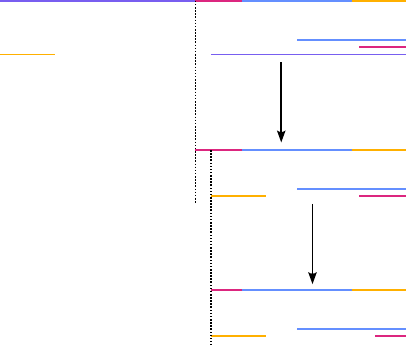}
	\caption{The case of the $\rauzy$ induction for $4$-Bruin which yields a $3$-ITM.}
	\label{fig:Bruin_finite_type}
\end{figure}

\textbf{Case 3. $\lambda_1 < \lambda_4$.}
We induce on the interval $[\lambda_1, 1)$. We have that, the lengths after the
induction are related to ones before by
\[
	\begin{split}
		\lambda'_1 &= \lambda_2,\\
		\lambda'_2 &= \lambda_3,\\
		\lambda'_3 &= \lambda_1,\\
		\lambda'_4 &= \lambda_4 - \lambda_1.
	\end{split}
\]
see \Cref{fig:Bruin_induction_case1}. In this case, however, the order of the
intervals has changed: after the induction, in the domain, the rightmost
interval is now the one of length $\lambda_2$. For our purpose, it will be
easier to not remember the names of the intervals. In other words, after this
move of the induction, we relabel the intervals in the domain so that they are
always in the natural ordering (this is the technical difference with respect to
the $3$-intervals case as it was done in~\cite{AHS}). Denoting, as before, by
$\lambda' = (\lambda'_1, \lambda'_2, \lambda'_3, \lambda'_4)$ the lengths of the
intervals of continuity for the transformation after the induction, we have that
$\lambda = C_4 \lambda'$, where $C_4$ is the matrix
\[
	C_4 = \begin{pmatrix}
		0 & 0 & 1 & 0 \\
		1 & 0 & 0 & 0 \\
		0 & 1 & 0 & 0 \\
		0 & 0 & 1 & 1
	\end{pmatrix}.
\]

Given a length vector $\lambda \in \Delta^3$, which corresponds to a Bruin
$4$-ITM, if we repeat $n$ times the induction just introduced, we obtain a
vector $\lambda^{(n)}$, defined by $\lambda = \rauzy^{(n)} \lambda^{(n)}$, where
$\rauzy^{(n)} = \rauzy(k_1, k_2, k_3, \dots) = D_4^{k_1} C_4 D_4^{k_2} C_4 \cdots$, with
$k_i\in\NN$ and $\sum_i k_i = n$. It can be shown, following the classical
strategy used for general simplicial systems, see~\cite[Section~2]{AHS}, that
$k_i$ are infinitely often strictly greater than $0$, and that the corresponding
Bruin ITM is of infinite type if and only if we can apply the $\rauzy$ induction
infinitely many times, i.e., the second case of the induction never happens.

\begin{remark}
	It can be shown, analogously to what is done in~\cite[Proposition~2.7]{AHS},
	that, by accelerating the induction $\rauzy$ just introduced, one recovers
	the Gauss-like map introduced in~\cite{Br}. More precisely, one move of the
	accelerate Rauzy induction corresponds to applying $k-1$ times the first
	case for $k\ge 1$, followed by one instance of the third case.
\end{remark}

From now on, we consider the accelerated version of the $\rauzy$ induction we
introduced above. We will still denote the accelerated version $\zorich$. A
direct computation yields that
\[
	\lambda =  D_4^{k-1} C_4 \lambda' = Z_4(k) \lambda',
\]
with
\[
	Z_4(k) = \begin{pmatrix}
		k-1 & k-1 	& k & k-1 	\\
		1 	& 0		& 0	& 0		\\
		0 	& 1		& 0	& 0		\\
		0 	& 0		& 1	& 1		
	\end{pmatrix}.
\]

\subsection{Suspending Bruin $4$-ITMs}

Given a Bruin ITM on $4$ intervals $T$ and $h = (h_1, h_2, h_3,
h_4)\in\RR_{>0}$, we construct a rectangle of height $h_i$ above the $i$-th
interval of continuity. Then, we identify the top side of this rectangle to the
original interval according to the transformation $T$, analogously to what it is
shown in \Cref{fig:band_complex} for the Bruin-Troubetzkoy case. We can extend
the Rauzy induction introduced in the previous section to an induction over the
rectangles as well. We denote this induction on the heights by
$\widetilde\rauzy$. One can check that in Case~1, if $h'= (h'_1, h'_2, h'_3,
h'_4)$ denotes the heights of the rectangles after the induction, then
\[
	\begin{split}
		h_1' &= h_1, \\
		h_2' &= h_2, \\
		h_3' &= h_3, \\
		h_4' &= h_1 + h_4. \\
	\end{split}
\]
In other words, $h' = V_{D_4} h$, where $V_{D_4}$ is given by
\[
	V_{D_4} = \begin{pmatrix}
		1 & 0 & 0 & 0 \\
		0 & 1 & 0 & 0 \\
		0 & 0 & 1 & 0 \\
		1 & 0 & 0 & 1
	\end{pmatrix}.
\]
We stress that the non negative matrix $V_{D_4}$ expresses the new heights in terms of
the old ones, contrary to what happened for the matrix $D_4$ associate to the
lengths.

Similarly, in Case~3, we obtain
\[
	\begin{split}
		h_1' &= h_2, \\
		h_2' &= h_3, \\
		h_3' &= h_1 + h_4, \\
		h_4' &= h_4. \\
	\end{split}
\]
In other words, $h' = V_{C_4} h$, where $V_{C_4}$ is given by
\[
	V_{C_4} = \begin{pmatrix}
		0 & 1 & 0 & 0 \\
		0 & 0 & 1 & 0 \\
		1 & 0 & 0 & 1 \\
		0 & 0 & 0 & 1
	\end{pmatrix}.
\]

Exactly as before, one can repeatedly apply the Rauzy induction on rectangles.
Their heights change according to $h^{(n)} =  \widetilde\rauzy^{(n)} h$, where
$\widetilde\rauzy^{(n)} = \widetilde\rauzy (k_1, k_2,k_3, \dots) = V_{D_4}^{k_1} V_{C_4}
V_{D_4}^{k_2} V_{C_4} \cdots$, with $k_i \in \NN$ and $\sum_i k_i = n$. 

As we did in the previous section, let $\widetilde\zorich$ be the accelerated
induction. Then, we have 
\[
	h' = V_{D_4}^{k-1} V_{C_4} h = \widetilde{Z}_4(k) h, 
\]
with
\[
	\widetilde{Z}_4(k) = \begin{pmatrix}
		0 & 1 & 0 & 0 \\
		0 & 0 & 1 & 0 \\
		k & 0 & 0 & 1 \\
		k-1 & 0 & 0 & 1
	\end{pmatrix}.
\]

\subsection{A conjugation}
First, let us recall that, as in case of Bruin-Troubetzkoy ITM, there is a
natural way to associate the substitution with the ITM (see~\cite{Br}):
\begin{equation}\label{eq:4Bruin_subs}
	\chi_k\colon \begin{cases}
		1 &\mapsto 2,\\
		2 &\mapsto 3,\\
		3 &\mapsto 41^k,\\
		4 &\mapsto 41^{k-1}.
	\end{cases}
\end{equation}
The incidence matrix of this substitution is the following: 
\[
A_4(k) = \begin{pmatrix}
    0 & 0 & k & k-1 \\
    1 & 0 & 0 & 0 \\
    0 & 1 & 0 & 0 \\
    0 & 0 & 1 & 1
\end{pmatrix}.
\]
Therefore, one immediately notices that $\transpose A_4(k) = \widetilde Z_4(k)$. 

We want to check that the limit of the product $A_4(k_1)A_4(k_2)\cdots A_4(k_n)$
has two positive Lyapunov exponents. As in the Bruin-Troubetzkoy case, we
consider the inverse matrices $B_4(k) = A_4^{-1}(k)$. If we show that the main
eigenvalue $\lambda_1(B_4(\uk))$ is strictly positive (Claim 1), then the
smallest eigenvalue $\lambda_4(A_4(\uk))<0$. At the same time, since the
$\det(A(k))=1$, for all $k$, we have that $\sum_{i=1}^4\lambda_i(A_4(\uk)) = 0$.
So, if we can check that $\lambda_1(A_4(\uk))+\lambda_4(A_4(\uk))<0$, (Claim 2)
we will have proven that $\lambda_2(A_4(\uk))>0$.

So we check these two claims now. For consistency with \Cref{sec:BT_ITMs}, we
consider the transpose of some of the previous matrices. 

A direct computation shows that
\begin{lemma}\label{lemma:conjugation_4}
	The matrix of the horizontal induction $Z_4(k)$ is conjugated to the matrix
	$B_4 (k) = A_4(k)^{-1}$ via a matrix that does not depend on $k$. More
	precisely, $Z_4(k) = J_4 A_4(k)^{-1} J_4^{-1}$, with
	\[
		J_4 = \begin{pmatrix}
			0 	&	0	&	1	& 	0 \\
			0 	&	1	&	0	& 	0 \\
			1 	&	0	&	0	& 	0 \\
			-1 	&	-1	&	-1	& 	-1 
		\end{pmatrix}.
	\]
\end{lemma}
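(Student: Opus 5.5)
The identity $Z_4(k)=J_4A_4(k)^{-1}J_4^{-1}$ is equivalent to $Z_4(k)\,J_4\,A_4(k)=J_4$, and I will verify it in that form. This avoids inverting $A_4(k)$ and makes the computation a product of two explicit matrices. First I note that $J_4$ is invertible. Expanding along its first two rows shows $\det J_4=\pm 1$, so $J_4\in\GL(4,\ZZ)$; it is visibly independent of $k$.

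Second, I compute $J_4A_4(k)$ row by row. Each of the first three rows of $J_4$ is a standard basis vector, so it simply selects a row of $A_4(k)$. The last row of $J_4$ produces minus the sum of all rows of $A_4(k)$. This gives
\[
	J_4A_4(k)=\begin{pmatrix}
		0 & 1 & 0 & 0\\
		1 & 0 & 0 & 0\\
		0 & 0 & k & k-1\\
		-1 & -1 & -k-1 & -k
	\end{pmatrix}.
\]

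Third, I multiply on the left by $Z_4(k)$ and compare with $J_4$ row by row.
\begin{itemize}
	\item Rows $2$, $3$ and $4$ of $Z_4(k)$ are $e_1$, $e_2$ and $e_3+e_4$. They return $(1,0,0,0)$, $(0,1,0,0)$ and $(0,0,k,k-1)+(-1,-1,-k-1,-k)=(-1,-1,-1,-1)$. These are exactly rows $2$, $3$ and $4$ of $J_4$.
	\item Row $1$ of $Z_4(k)$ is $(k-1,k-1,k,k-1)$, and this is the only place where the dependence on $k$ could fail to cancel. The first two entries of the product are $(k-1)-(k-1)=0$. The third entry is $k^2-(k-1)(k+1)=1$. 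The fourth entry is $k(k-1)-(k-1)k=0$. So the first row is $(0,0,1,0)$, which is row $1$ of $J_4$.
\end{itemize}

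This establishes $Z_4(k)J_4A_4(k)=J_4$ for every $k$, hence $Z_4(k)=J_4A_4(k)^{-1}J_4^{-1}$. The first row is the only step needing care, and I expect no real obstacle there beyond the cancellation $k^2-(k^2-1)=1$. As in \Cref{lemma:BT_conjugation}, it follows that $Z_4(k)$ and $B_4(k)$ have the same eigenvalues. Since the conjugating matrix is constant, the products $Z_4(k_1)\cdots Z_4(k_n)$ and $B_4(k_1)\cdots B_4(k_n)$ are conjugated by the fixed matrix $J_4$, so the two cocycles have the same Lyapunov exponents.
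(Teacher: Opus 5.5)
Your proposal is correct and is the same direct computation the paper invokes; checking $Z_4(k)J_4A_4(k)=J_4$ instead of inverting $A_4(k)$ is a clean way to organize it. One harmless slip: rows $2$ and $3$ of $Z_4(k)$ pick out rows $1$ and $2$ of $J_4A_4(k)$, so they give $(0,1,0,0)$ and $(1,0,0,0)$ in that order, and these are exactly rows $2$ and $3$ of $J_4$ --- you swapped the vectors on both sides, so the conclusion still holds.
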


Now, we can switch to the relations between $Z_4(k)$ and $\widetilde Z_4(k)$. 
Namely, from the same argument of \Cref{inequalityonPF} we know that 
\[
	\lambda_1(Z_4(\uk))>\lambda_1(\widetilde Z_4(\uk)) = \lambda_1 (A_4(\uk)).
\]

However, thanks to the observations we made before it follows that 
\begin{itemize}
\item the matrix $B_4(k_n) \cdots B_4(k_1)$ is conjugated to a matrix with
strictly positive entries. Therefore, its first eigenvalue is strictly positive
(our Claim 1);
\item one can see that the sum $\lambda_1(A_4(\uk))+\lambda_4(A_4(\uk))$ is
equal to the sum $\lambda_1(Z_4(\uk))-\lambda_1(\widetilde{Z_4}(\uk))$, which is
positive (Claim 2).
\end{itemize}

Now, all the material of the \Cref{sec:criterion} can be applied without any
changes. 

\section{From positivity of the second Lyapunov exponent to weak mixing}\label{sec:avilaforni}
\subsection{General strategy}
The positivity of the second Lyapunov exponent allows us to use the strategy of
Avila and Forni~\cite{AvilaForni} to prove weak mixing of $\mu$-almost every BT
ITM and Bruin $4$-ITMs. More precisely, we will follow the version adapted for
$S$-adic systems presented~\cite[Theorem~A.1]{Solomyak}. For convenience, we
state their theorem here.

\begin{theorem}[Theorem~A.1, ~\cite{Solomyak}]\label{thm:HubertMatheus}
	Let $\sigma$ be a topologically mixing shift of finite type on a finite
	alphabet $\alphabet$, and $\mu$ be a Gibbs measurable associated to a Hölder
	potential. For each element $a\in\alphabet$, we consider a substitution
	$\zeta_a$ on $\balphabet^\NN$, where $\balphabet$ is a finite alphabet on
	$d$ letters. The matrices of the substitution induce a cocycle $S$ over
	$\sigma$ in the set of $d\times d$ matrices with entries in $\NN$. Assume
	that
	\begin{enumerate}
		\item For each $a\in\alphabet$, the matrix $S_a$ has determinant $\pm 1$.
		\item There is a word $w$ in the language of $\sigma$ such that all the
		entries of the matrix $S_w$ are positive, and the substitution $\zeta_w$
		satisfies the strong coincidence condition.
		\item The cocycles $S$ and $S^{-1}$ are $\log$-integrable with respect
		to the measure $\mu$.
		\item The second Lyapunov exponent of the cocycle $S$ is positive.
		\item The group generated by the matrices $S_w$, for $w$ in the language
		of $\sigma$ is Zariski dense inside $\SL(d,\RR)$.
	\end{enumerate}
	Then, for $\mu$-almost every sequence in $\balphabet^\NN$, the associated
	$S$-adic system is \emph{weakly mixing}.
\end{theorem}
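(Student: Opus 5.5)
The plan follows the Avila--Forni strategy~\cite{AvilaForni}, transcribed into the $S$-adic language: first reduce weak mixing to a Diophantine statement about the height cocycle (Veech criterion), then show that this Diophantine condition fails for $\mu$-almost every sequence by a parameter exclusion argument driven by hypotheses (4) and (5).

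\emph{Step 1 (Veech criterion).} For a sequence $(a_n)$, write $S^{(n)} = S_{a_1}\cdots S_{a_n}$. The heights of the level-$n$ Rokhlin towers of the $S$-adic system are $h^{(n)} = \transpose{(S^{(n)})}\mathbf 1$.

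Suppose $e^{2\pi i t}$, with $t\notin\ZZ$, is an eigenvalue with a measurable eigenfunction. Let $n_j$ be the times at which the sequence starts a copy of the word $w$ from hypothesis (2); by ergodicity of the Gibbs measure these times have positive frequency. Hypothesis (2) gives a positive matrix $S_w$, and the strong coincidence condition then guarantees that the level-$n_j$ towers return to a common base with a definite proportion of their mass. Approximating the eigenfunction by functions constant on tower levels then yields
\[
	\operatorname{dist}\bigl(t\,h^{(n_j)},\ZZ^d\bigr)\to 0 \qquad (j\to\infty).
\]

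\emph{Step 2 (structure of the bad set).} For a fixed sequence, define
\[
	E=\Set*{v\in\RR^d \given \operatorname{dist}\bigl(\transpose{(S^{(n_j)})}v,\ZZ^d\bigr)\to 0}.
\]
By hypothesis (1), and since all $S_a$ are integral, each $\transpose{(S^{(n)})}$ maps $\ZZ^d$ onto $\ZZ^d$. Hypothesis (3) gives Oseledets regularity, so bounded-distortion control holds between consecutive returns to $w$. From this one shows that an integer correction to $\transpose{(S^{(n_j)})}v$, once it is small, is propagated by the next matrix. Hence
\[
	E = W^s + \ZZ^d ,
\]
where $W^s$ is the Oseledets stable space of the transposed cocycle. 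Its dimension equals the number of negative exponents of the transposed cocycle. By hypothesis (4) and $\det=\pm1$, at least two exponents are positive, so $W^s$ has codimension at least $2$.

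It therefore suffices to show that, for $\mu$-almost every sequence, the line $\RR\mathbf 1$ meets $W^s+\ZZ^d$ only in $\ZZ\mathbf 1$.

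\emph{Step 3 (parameter exclusion, the main obstacle).} This is where measure theory enters, and I expect it to be the hardest step. The line $\RR\mathbf 1$ is fixed while $W^s$ depends on the sequence, so I would argue as Avila--Forni do.

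Fix $n$ and a cylinder of sequences. Project the future along the past and consider how the point $\transpose{(S^{(n)})}(t\mathbf 1)$ moves as $t$ varies in a bounded window. Its image is a segment in the expanding directions, of length growing at least like $e^{\lambda_2 n}$ in a direction transverse to the top one. One then shows that the set of pairs (sequence, $t$) for which this segment comes within $\epsilon$ of $\ZZ^d+W^s$ at all later return times has measure decaying exponentially in the number of returns.

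Two ingredients make this work. Bounded distortion of the Gibbs measure on cylinders lets me treat the tail as nearly independent of the past. Zariski density, hypothesis (5), rules out a proper algebraic subvariety, for instance an invariant rational subspace or a quadratic form, that could force the segment to stay aligned with $W^s+\ZZ^d$. Concretely, I would use (5) to show that the projective action has no invariant finite union of proper subspaces, which gives uniform transversality of the segment to the codimension-$\ge 2$ family $W^s$ on a positive-measure set of steps.

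A Borel--Cantelli argument summed over the countably many integer translates, using the exponential growth from $\lambda_2>0$, then shows that the bad set of $t$ has zero Lebesgue measure for almost every sequence. A final Fubini and ergodicity argument upgrades this to: no $t\notin\ZZ$ is an eigenvalue for $\mu$-almost every sequence. Rational $t$ are excluded separately, using the positivity of $S_w$ and coincidence, which give total ergodicity. Together these give weak mixing.
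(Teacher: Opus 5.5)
Your outline follows the same Avila--Forni route the paper invokes: Veech criterion along balanced times, then $\lambda_2>0$ plus twisting from Zariski density to show that $\RR\mathbf 1$ meets $W^s+\ZZ^d$ only in $\ZZ\mathbf 1$. (The paper itself only cites the theorem and sketches these two steps.) However, your Step~3 puts the randomness on the wrong variable, and as written it does not prove the theorem. For a fixed sequence, the set of $t$ with $\operatorname{dist}(t h^{(n_j)},\ZZ^d)\to 0$ is Lebesgue-null for trivial reasons. If it contained a set $A\subset[0,1]$ of positive measure, dominated convergence would give $\int_A e^{2\pi i t h^{(n_j)}_1}\,dt\to\mathrm{Leb}(A)$, contradicting the Riemann--Lebesgue lemma since $h^{(n_j)}_1\to\infty$; in any case the set of eigenvalues is countable. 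So the output of your Borel--Cantelli step is vacuous, and the ``final Fubini and ergodicity argument'' is in fact the entire content of the theorem. It does not go through. Fubini on pairs (sequence, $t$) only shows that each fixed $t$ is an eigenvalue for a $\mu$-null set of sequences. This cannot be exchanged into ``for $\mu$-a.e.\ sequence, no $t\notin\ZZ$ is an eigenvalue'', because the putative eigenvalue may depend on the sequence. Ergodicity at best says the bad set of sequences has measure $0$ or $1$. Note also that for a fixed sequence the image of $\{t\mathbf 1\}$ is a straight segment, exponentially aligned with the top Oseledets direction, so there is no two-dimensional spreading in $t$ to exploit.

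The missing idea is to fix the line and the integer translate and let only the sequence vary. Suppose $t\mathbf 1\in E^{cs}(x)+c$ with $c\in\ZZ^d\setminus\ZZ\mathbf 1$. Then $t\mathbf 1-c$ is a nonzero vector of $E^{cs}(x)$ lying in the fixed plane $\operatorname{span}(\mathbf 1,c)$. For each $c$ this determines at most one $t$, and it is a proper (hyperplane-section type) condition on the random subspace $E^{cs}(x)$, which has codimension $\ge 2$ and depends only on the future. What must be shown is that for each fixed $c$ this event has $\mu$-measure zero; one then takes a countable union over $c$. This is where twisting in the sense of Avila--Viana (from Zariski density) and the Markov/bounded-distortion structure of the Gibbs measure enter. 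It is exactly the paper's ``for a generic parameter, the line $L$ does not intersect the central stable manifold'', fed into \cite[Theorem~6.4]{AvilaForni}.

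Two further steps need repair. First, in Step~2 the identity $E=W^s+\ZZ^d$ is not justified for a fixed sequence. Write $\transpose{(S^{(n_j)})}v=p_j+q_j$ with $p_j\in\ZZ^d$ and $q_j\to0$, and let $M_j$ be the product over the gap between consecutive returns. Propagating the integer parts needs $\lVert M_j q_j\rVert<1/2$, but the gaps are unbounded and $q_j\to 0$ comes with no rate. Moreover, vectors whose images tend to $0$ lie a priori in the center-stable space, not the stable one. Handling this locking is part of what Avila--Forni's probabilistic argument does, and it yields only an almost-sure statement. Second, positivity of $S_w$ plus strong coincidence do not by themselves give total ergodicity: the paper's BT ITM with eigenvalue $-1$ is built from the same substitutions. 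Rational $t=p/q$ with $q>1$ are excluded instead as follows. The condition $\operatorname{dist}(\tfrac pq h^{(n_j)},\ZZ^d)\to0$ forces $h^{(n_j)}\in q\ZZ^d$ eventually. This is impossible, since $h^{(n_j)}=\transpose{(S^{(n_j)})}\mathbf 1$ with $\transpose{(S^{(n_j)})}\in\GL(d,\ZZ)$.
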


We define the \emph{strong coincidence} condition below. Our goal is to check
all these conditions for Bruin-Troubetzkoy and Bruin classes of ITMs. 
Actually, we will see that the proof for Bruin-Troubetzkoy ITMs can be repeated
\emph{mutatis mutandis} for Bruin ITMs on $4$ intervals with only one
difference: how to establish Zariski density.

\subsection{General setting}
The $\rauzy$ induction defines a topologically mixing shift of finite type on
the finite alphabet $\alphabet = \{A, B, C_A, C_B\}$, which corresponds to the
matrices associated to the induction, see~\cite[Section~4.2]{AHS}. The ergodic
measure $\mu$ constructed in~\cite[Theorem~1.3]{AHS}, is a Gibbs measure with
respect to the Hölder potential associated to the roof function of the natural
suspension flow over the renormalization. Hence, we are in the setup of the
above result. We will now check that the assumptions (1) to (5) holds for us,
with $\balphabet = \{1, 2, 3\}$, and $S$ the abelianization of the substitutions
$\chi_k$.

It is easy to see that it works exactly the same in the context of Bruin ITMs
with any number of continuity intervals. In both cases, the first condition is
clearly satisfied.

\subsection{Strong coincidence for Bruin-Troubetzkoy and Bruin ITM}
We begin by recalling the definition of strong coincidence condition. A
substitution $\zeta$ on an alphabet $\balphabet$ has the \emph{strong
coincidence} property if there exist a $k\in\NN$ and a letter $b\in\balphabet$
such that, for every $a\in\balphabet$ we have $\zeta^k(a) = W_k^a b S_k^a$,
where $W_k^a$ and $S_k^a$ are (possibly empty) words such that either all the
$W_k^a$ or all the $S_k^a$ have the same abelianization. A substitution $\zeta$
is called \emph{left-proper} if all the words $\zeta(a)$, for $a\in\balphabet$,
start with the same letter. If a substitution is left proper, it has the strong
coincidence property. Hence, we will show left property.

First, let us check Bruin-Troubetzkoy ITM. Recall the definition of the BT
substitutions $\chi_k$ in \cref{eq:BT_subs} on \cpageref{eq:BT_subs}. The
abelianization of these substitutions are the matrices $A_3$(k) given by
\cref{eq:BR_Ak} on \cpageref{eq:BR_Ak}, who have all determinant $-1$. We remark
that, if we compose two of these substitutions, we obtain a left-proper
substitution:
\begin{equation}\label{eq:left_proper}	
	\chi_k \circ \chi_l\colon \begin{cases}
		1 &\mapsto 31^k\\
		2 &\mapsto 31^{k-1}2^l \\
		3 &\mapsto 31^{k-1}2^{l-1},
	\end{cases}
\end{equation}
which implies the strong coincidence condition. Hence, up to accelerating, we
obtain both the positivity of the matrices and the strong coincidence condition.

The proof in the $4$-Bruin case works the same way since the induction is always
applied from one side (the left one). In fact, composing three of the
substitutions in \cref{eq:4Bruin_subs}, we obtain a left-proper substitution.

\subsection{Log-integrability}
Next, we show $\log$-integrability of the cocycle $A_3(k)$ defined by the
abelianization of the substitutions matrices.

\begin{lemma}
	The cocycles $A_3(k)$ and $A_3(k)^{-1}$ are $\log$-integrable with respect to
	$\mu$.
\end{lemma}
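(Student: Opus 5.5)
The plan is to reduce log-integrability to an integrability statement for $\log k$ with respect to $\mu$, and then obtain that from the explicit structure of the measure of~\cite{AHS}. First I bound the norms. For any fixed matrix norm, $\norm{A_3(k)} \le C k$ with $C$ independent of $k$, since all entries of $A_3(k)$ lie between $0$ and $k$. Because $\det A_3(k) = -1$, the inverse equals minus the adjugate, whose entries are $2\times 2$ minors of $A_3(k)$. Alternatively, one can use the conjugation $A_3(k)^{-1} = O^{-1}\widetilde{B}_3(k) O$, where $\widetilde B_3(k)$ has entries bounded by $k$. In either case $\norm{A_3(k)^{-1}} \le C' k$. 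Hence
\[
	\log^+\norm{A_3(k)^{\pm 1}} \le \log C'' + \log k,
\]
and it is enough to show that $\int \log k_1 \, d\mu < \infty$, where $k_1$ is the first digit of the accelerated (Gauss-like) induction $\zorich$.

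The second step is to estimate $\mu(\{k_1 = k\})$. The measure $\mu$ from~\cite[Theorem~1.3]{AHS} is absolutely continuous on the (fractal) parameter set, with respect to the natural measure coming from the simplicial system. I would use the standard description of cylinders: the set of parameters with first digit $k$ is the image of the whole parameter simplex under the projective map $\lambda \mapsto Z_3(k)\lambda / \lOnenorm{Z_3(k)\lambda}$. For a projective simplex the natural Lebesgue-type measure of the image is comparable to $\prod_j \lOnenorm{Z_3(k) e_j}^{-1}$, up to the Jacobian normalisation. The columns of $Z_3(k)$ have $L^1$-norms $k$, $k+1$, $k$. This gives a bound $\mu(\{k_1 = k\}) \le C k^{-2}$, or at worst $C k^{-1-\delta}$ for some $\delta>0$, where $\mu$ here is the measure on the Cantor set of infinite-type parameters. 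The Gibbs property of $\mu$ with respect to the Hölder potential (the roof function) gives this polynomial decay directly: the roof function on the cylinder $\{k_1=k\}$ is comparable to $\log k$, and the Gibbs measure of a cylinder is comparable to $e^{-P\cdot \text{roof}}$ with a positive exponent.

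The conclusion is then $\int \log k_1 \, d\mu \le \sum_k C k^{-1-\delta}\log k < \infty$. Integrability for the non-accelerated cocycle over $\alphabet=\{A,B,C_A,C_B\}$ is immediate, since there are finitely many matrices.

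The main obstacle is the second step: obtaining a quantitative tail bound for $\mu(\{k_1=k\})$. The measure $\mu$ is supported on a Lebesgue-null set, so one cannot simply use Lebesgue estimates. I would first try to use the Gibbs property together with the explicit dependence of the roof function (the log of the normalisation factor) on $k$, showing that the roof grows like $c\log k$ with $c>1$ for the relevant pressure normalisation. If that fails, I would fall back on the fact that $\mu$ has finite entropy/finite expected roof in~\cite{AHS}, since the roof on $\{k_1 = k\}$ dominates $\log k$, which already yields $\int\log k_1\,d\mu<\infty$.
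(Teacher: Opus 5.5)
Your proposal is correct, but your primary route differs from the paper's; your fallback is exactly the paper's proof.

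\textbf{The reduction.} This step agrees with the paper in substance. The paper compares $\norm{A_3(k)}$ with $\norm{Z_3(k)}$ and gets the inverse from the determinant. You instead bound $\norm{A_3(k)^{\pm1}}\le Ck$ directly, using $A_3(k)^{-1}=O^{-1}\widetilde B_3(k)O$ for the inverse. Since $\log\norm{Z_3(k)}=\log k+O(1)$, both versions reduce the lemma to integrability of $\log k_1$, equivalently of $\log^+\norm{Z_3}$.

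\textbf{The paper's argument is your fallback.} The paper takes $\mu$ to be the Gibbs (equilibrium) measure of a potential $\phi$ proportional to $\log^+\norm{Z_3}$, i.e.\ the roof of the suspension flow. Finiteness of the entropy and pressure of $\mu$ then gives $\int\phi\,d\mu<\infty$ directly. No tail estimate is needed.

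\textbf{Your primary route and its soft spot.} Bounding $\mu(\{k_1=k\})$ is a genuinely different route, but as written it has a hole.
\begin{itemize}
\item The projective-volume computation concerns Lebesgue measure and says nothing about the singular measure $\mu$.
\item ``Comparable to $e^{-s\cdot\mathrm{roof}}$ with a positive exponent'' only gives $\mu(\{k_1=k\})\asymp k^{-s}$ with $s>0$. That is not summable against $\log k$ unless $s>1$.
\end{itemize}
The missing observation takes one line. On $\{k_1=k\}$ the roof is $\log k+O(1)$, because the columns of $Z_3(k)$ have $L^1$-norms $k$, $k+1$, $k$. The Gibbs bounds are two-sided and $\sum_k\mu(\{k_1=k\})=1$. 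Hence $\sum_k k^{-s}<\infty$, which forces $s>1$, and then $\sum_k k^{-s}\log k<\infty$.

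\textbf{What each route buys.} With this fix, your route gives more than the paper's: polynomial tails, hence $\int k_1^{\epsilon}\,d\mu<\infty$ for every $\epsilon<s-1$. The cost is that you need the two-sided Gibbs bounds on the countable-alphabet accelerated system. The paper's softer argument only needs $\int\phi\,d\mu<\infty$.

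\textbf{A side remark.} Your comment that integrability over the finite alphabet $\{A,B,C_A,C_B\}$ is immediate is beside the point. The matrices $A_3(k)$, $k\in\NN$, live over the accelerated system, and that is exactly where integrability is non-trivial.
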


\begin{proof}
	The cocycle defined by $Z_3(k)$ is $\log$-integrable with respect to $\mu$
	by the construction of $\mu$ in~\cite{Fougeron:Simplicial} (similar ideas
	where also exploited by a subset of the authors in~\cite{AvHS}). More
	precisely, the measure is constructed using the potential $\phi$, which is
	proportional to $Z_3(k)$ and, since the measure is a Gibbs measure, its
	entropy and Gurevich-Sarig pressure are finite: $\int\phi d\mu<\infty$,
	where $\phi = \kappa \log^{+}{\norm{Z_3}}$ for some constant $\kappa$. Moreover,
	from the above computation, one sees that $1 \le
	\frac{\norm{A_3(k)}}{\norm{Z_3(k)}} < 2$. Hence:
	\[
		\int \log^+ \norm{A_3(k)} \, d\mu < 2 \int \log^+ \norm{Z_3(k)} \, d\mu < \infty.
	\]
	
	The statement for $A_3(k)^{-1}$ follows from the above, since
	$A_3(k)\in\SL(3,\RR)$.
\end{proof}

\begin{remark}
One can check explicitly that the same proof works for Bruin ITMs. Namely, since
the measure we use is again the measure of maximal entropy whose potential is
proportional to the induction cocycle, one only needs to check the connection
between the substitution incidence matrix and the matrix of the induction, as it
was performed in~\cite[Proposition~2.7]{AHS}.

By considering the acceleration mentioned above, given by the matrix $Z_4(k)$,
and apply the renormalization, we get exactly the transformation defined in
Bruin's paper. 
\end{remark}

\Cref{prop:LyapunovExpsAs} then guarantees that, for $\mu$-almost all sequences
of $k_i$ the second Lyapunov exponent of the cocycle $A_3(\uk)$ is positive.
Hence, we only have to show Zariski density.

\subsection{Zariski density}\label{sec:Zarisky_density3-4}

\begin{lemma}\label{lemma:Zariski_density}
	The matrices $\Set{A_3(k) \given k\in\NN}$ generate a Zariski dense subgroup
	inside $\SL(3,\RR)$.
\end{lemma}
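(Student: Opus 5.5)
We will work with the Zariski closure $H$ of the group $G$ generated by $\Set{A_3(k)\given k\in\NN}$, intersected with $\SL(3,\RR)$. Since $\det A_3(k)=-1$, the products $A_3(k)A_3(l)$ generate an index-two subgroup $G^+=G\cap\SL(3,\RR)$, and it suffices to show that the Zariski closure of $G^+$ is $\SL(3,\RR)$. The proof has three steps: first produce a one-parameter unipotent subgroup of rank-one type, then show that $G^+$ acts strongly irreducibly, and finally invoke the classification of connected irreducible subgroups of $\SL(3,\RR)$.

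\emph{Transvections.} The key observation is that the dependence on $k$ is affine and of rank one:
\[
A_3(k)=A_3(1)+(k-1)\,e_1(e_2+e_3)^{\top}.
\]
Hence
\[
A_3(k)A_3(1)^{-1}=I+(k-1)\,e_1w^{\top},\qquad w^{\top}=(e_2+e_3)^{\top}A_3(1)^{-1}.
\]
This matrix has determinant $1$, so $1+(k-1)\langle w,e_1\rangle=1$, which forces $\langle w,e_1\rangle=0$. Thus $A_3(k)A_3(1)^{-1}$ is a transvection, with nilpotent part $N=e_1w^{\top}$ of rank one. Its integer powers $I+nN$ all lie in $G^+$. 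Since the map $t\mapsto I+tN$ is polynomial, the Zariski closure $H$ contains the whole one-parameter group $U=\Set{I+tN\given t\in\RR}$. Consequently the identity component $H^0$ contains a nontrivial rank-one unipotent element.

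\emph{Strong irreducibility.} This is the step I expect to be the main obstacle. We need that no finite-index subgroup of $G^+$ preserves a proper nonzero subspace of $\RR^3$. I would argue through a single element and then use an explicit check. First, pick some $k$, for instance $k=2$, or the product $A_3(2)^2$ to land in $\SL$. Compute its characteristic polynomial and check that it is irreducible over $\QQ$ and has one real Pisot-type root together with two roots of different moduli. Then no power of the matrix has a rational invariant subspace, and the eigenvalues are distinct with no ratio equal to a root of unity. As a result, every power has exactly the same invariant subspaces: the three eigenlines and the three planes they span. Any finite-index subgroup contains such a power, so its invariant subspaces are among these six. It then remains to check that a second generator, either $A_3(1)$ or the transvection above, moves each of the six. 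For the transvection this is direct: it fixes only the plane $w^{\perp}$ and the lines it contains, and $e_1$ is not an eigenvector because the eigenvectors are irrational.

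\emph{Conclusion.} Strong irreducibility makes $H^0$ an irreducible connected subgroup of $\SL(3,\RR)$, hence reductive, and in fact semisimple because the center acts by scalars. Up to conjugacy, the proper connected irreducible subgroups of $\SL(3,\RR)$ are only $\SO(2,1)^0$ and its compact forms, i.e.\ the image of $\SL(2)$ under the irreducible representation. In that image every unipotent element has nilpotent part of rank two. This contradicts the presence of the rank-one unipotent group $U$ in $H^0$. Therefore $H^0=\SL(3,\RR)$, and the group generated by the $A_3(k)$ is Zariski dense.
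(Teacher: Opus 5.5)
Your route is different from the paper's and sound in outline, but the strong-irreducibility step contains a false claim that needs repair. Here $w^{\top}=(e_2+e_3)^{\top}A_3(1)^{-1}=e_3^{\top}$, so $A_3(k)A_3(1)^{-1}=I+(k-1)E_{13}$. A subspace $V$ is invariant under $I+tE_{13}$, $t\neq 0$, if and only if $V\subseteq\{x_3=0\}$ \emph{or} $e_1\in V$. So the transvection does not fix ``only the plane $w^{\perp}$ and the lines it contains'': it also preserves every plane through $e_1$, e.g.\ $\langle e_1,e_3\rangle$.

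What you must verify is that (i) no eigenvector of $A_3(2)$ has vanishing third coordinate, and (ii) no eigenplane of $A_3(2)$ contains $e_1$, i.e.\ no left eigenvector has vanishing first coordinate. ``The eigenvectors are irrational'' gives neither, since an irrational line can lie in a rational plane. Both are easy to check directly. Right eigenvectors are multiples of $(\lambda(\lambda-1),\lambda-1,1)$. A left eigenvector satisfies $\ell_2=\lambda\ell_1$ and $\ell_3=\lambda\ell_2-2\ell_1$, so $\ell_1=0$ forces $\ell=0$. Alternatively, the Galois group of the irreducible polynomial $x^3-x^2-2x+1$ permutes the right (resp.\ left) eigenlines transitively; if one lay in $\{x_3=0\}$ (resp.\ annihilated $e_1$), all three would.

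Two smaller remarks. The roots $\approx 1.80,\,-1.25,\,0.45$ are not of Pisot type, but you only use that their moduli are distinct, which holds. And ``the center acts by scalars'' needs absolute irreducibility, which follows from $\RR$-irreducibility on $\RR^3$ because $3$ is odd.

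With this fixed, here is how the two proofs compare. The paper argues arithmetically: $x^3-x^2-3x+1$ is irreducible with non-square discriminant $148$, so $A_3(3)$ has Galois group $S_3$. Together with the non-commuting infinite-order element $A_3(4)$, the Prasad--Rapinchuk/Rivin criterion then gives Zariski density.

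You instead use the rank-one dependence on $k$ to produce a transvection. For $k=2$ it is exactly $T_{13}=A_3(2)A_3(1)^{-1}$, which the paper only exploits in its last section. You then prove strong irreducibility by hand and exclude the only proper connected irreducible subgroups of $\SL_3$, the groups $\mathrm{SO}(q)$, because their nontrivial unipotents have nilpotent part of rank two.

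Your argument avoids the Galois computation and the black-box criterion. It also handles $\det A_3(k)=-1$ explicitly through $G^+$, a point the paper's proof passes over. The price is reliance on the classification of irreducible subgroups of $\SL_3$. For even $d$ a transvection does not exclude $\mathrm{Sp}(d)$, so your method does not extend as directly as the paper's Steinberg-generator argument, which proves the stronger fact that the group contains $\SL(d,\ZZ)$.
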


\begin{proof}
	The characteristic polynomial of $A_3(k)$ is
	\[
		p_k(x) = x^3 - x^2 - kx + 1,
	\]
	and its discriminant is given by
	\[
		\Delta_k = 4k^3 + k^2 +18k - 23,
	\]
	which is strictly positive if $k > 1$. Then, all the roots of $p_k$ are
	real and distinct.

	Let us consider the cases $k = 3$. Since $\Delta_3 = 148$ is not the square
	of a rational number, the Galois group of the splitting field of $p_3$ is
	$S_3$.

	Finally, we remark that $A_3(3)$ and $A_3(4)$ do not commute. The matrix
	$A_3(4)$ has infinite order, as it can be seen by looking at its
	characteristic polynomial. Hence, by~\cite[Theorem~9.10]{PrasadRapinchuk}
	(or~\cite[Theorem~1.5]{Rivin} for a more concrete presentation), the
	subgroup generated by $\Set{A_k \given k\in\NN}$ is Zariski dense inside
	$\SL(3,\RR)$.
\end{proof}

Let us recall the definition of Galois pinching~\cite{MMY, FS}.

\begin{definition}[Galois pinching]
	A matrix $A\in\SL(d,\ZZ)$ is \emph{Galois pinching} if its characteristic
	polynomial is irreducible over $\QQ$, all its roots are real and the Galois
	group of the splitting field (over $\QQ$) of its characteristic polynomial
	is the largest possible one\footnote{This means that it must be $S_d$.}.
\end{definition}

It is known that Zariski density implies Galois pinching and twisting of the
cocycle. However, it can also be proven directly using the argument of the proof
above.

\begin{corollary}
	The matrix $A_3(3)$ is Galois pinching.
\end{corollary}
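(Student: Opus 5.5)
The plan is to verify directly the three conditions in the definition of Galois pinching for the characteristic polynomial computed in the proof of \Cref{lemma:Zariski_density}, specialized to $k=3$:
\[
	p_3(x) = x^3 - x^2 - 3x + 1 .
\]
Since $A_3(3)$ is an integer matrix with determinant $\pm 1$, only the polynomial conditions need checking. There is a sign caveat: the paper states that the $A_3(k)$ have determinant $-1$, while the definition asks for $\SL(d,\ZZ)$. I would point this out and note that the argument concerns only the characteristic polynomial. If one insists on determinant $1$, one can pass to $A_3(3)^2$; its eigenvalues are the squares of those of $A_3(3)$, and it generates the same splitting field provided these squares are distinct and not rational, which also needs a check.

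\textbf{Irreducibility.} Since $p_3$ is a monic cubic with integer coefficients, it is reducible over $\QQ$ if and only if it has a rational root. By the rational root theorem such a root must be $\pm 1$. A direct evaluation gives $p_3(1) = -2$ and $p_3(-1) = 2$, so $p_3$ is irreducible over $\QQ$.

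\textbf{Real roots.} The discriminant of $x^3+ax^2+bx+c$ is $a^2b^2 - 4b^3 - 4a^3c - 27c^2 + 18abc$. With $(a,b,c) = (-1,-3,1)$ this gives $9 + 108 + 4 - 27 + 54 = 148$, which agrees with $\Delta_3$ in the proof of \Cref{lemma:Zariski_density}. Since $\Delta_3 > 0$, the three roots are real and distinct.

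\textbf{Galois group.} For an irreducible cubic, the Galois group is $S_3$ when the discriminant is not a square in $\QQ$, and $A_3$ otherwise. Here $148 = 4\cdot 37$ is not a rational square, so the Galois group is $S_3$, which is the largest possible for $d=3$. This completes the three conditions.

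No step is a genuine obstacle: everything reduces to the finite computations above, which were already carried out in the proof of \Cref{lemma:Zariski_density}. The only point requiring care is the determinant sign convention relative to the $\SL(d,\ZZ)$ requirement in the definition, handled as described in the first paragraph.
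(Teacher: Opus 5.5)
Your proof is correct and follows the paper's route: the paper derives the corollary by rerunning the argument in the proof of \Cref{lemma:Zariski_density} at $k=3$, where $\Delta_3 = 148 > 0$ gives three distinct real roots and $148$ not being a square gives Galois group $S_3$. Your rational-root check of irreducibility supplies a step the paper leaves implicit but which the discriminant criterion needs. Your determinant caveat is also legitimate and harmless: $A_3(3)^2$ has characteristic polynomial $x^3-7x^2+11x-1$, whose discriminant is $148\cdot p_3(1)^2 = 592$, again not a square.
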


Now we switch to the $4$-Bruin case. We recall that the cocycle we have is the
following: 
\[
	A_4(k) = \begin{pmatrix}
		0 & 0 & k & k-1 \\
		1 & 0 & 0 & 0	\\
		0 & 1 & 0 & 0	\\
		0 & 0 & 1 & 1
	\end{pmatrix}.
\]

\begin{lemma}
	The matrices $\Set{A_4(k) \given k\in \mathbb{N}}$ generate a Zariski dense
	subgroup inside $\SL(4,\mathbb{R})$.
\end{lemma}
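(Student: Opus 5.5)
We follow the strategy of \Cref{lemma:Zariski_density}: find one element of $\Set{A_4(k)}$ with large Galois group, and combine it with a second, non-commuting element of infinite order. We then invoke \cite[Theorem~9.10]{PrasadRapinchuk} (in the concrete form \cite[Theorem~1.5]{Rivin}).

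\textbf{Characteristic polynomial.} Solve $A_4(k)v = xv$ directly. The second and third rows give $v_1 = x v_2 = x^2 v_3$. The fourth row gives $v_3 = (x-1)v_4$. Substituting into the first row, $x^3 v_3 = k v_3 + (k-1) v_4$, yields
\[
	p_k(x) = x^4 - x^3 - kx + 1 .
\]
In particular $\det A_4(k) = 1$, so every $A_4(k)$ lies in $\SL(4,\ZZ)$.

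\textbf{Galois group of a specific $p_k$.} Fix a small value, say $k=3$, or the first $k$ for which the following works. First prove $p_k$ irreducible over $\QQ$: rational roots are excluded since only $\pm1$ are candidates, and a splitting into two quadratics is excluded by reducing modulo a prime where $p_k$ stays irreducible. Then show $\mathrm{Gal}(p_k) \cong S_4$ via Dedekind's theorem, factoring $p_k$ modulo several small primes not dividing the discriminant. We want to observe a factorization of type $(4)$ (a $4$-cycle), one of type $(3,1)$ (a $3$-cycle), and one of type $(2,1,1)$ (a transposition). A transitive subgroup of $S_4$ containing a $3$-cycle and a transposition is $S_4$. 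Alternatively, a $3$-cycle together with a discriminant that is not a square gives the same conclusion.

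Note that, unlike the $3$-interval case, $p_k$ has only two real roots for large $k$: the real roots are near $1/k$ and $k^{1/3}$, while the other two roots of $x^3\approx k$ are complex. So I will not claim Galois pinching. I will only use the full Galois group, together with irreducibility and infinite order of $A_4(k)$; the latter is automatic since $p_k$ has a root greater than $1$.

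\textbf{Second element and conclusion.} Take $A_4(l)$ with $l\neq k$. The matrices differ only in the $(1,3)$ and $(1,4)$ entries, and a direct check shows that $A_4(k)$ and $A_4(l)$ do not commute. $A_4(l)$ has infinite order for the same reason as $A_4(k)$. Applying \cite[Theorem~1.5]{Rivin} then gives Zariski density of the generated group in $\SL(4,\RR)$.

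\textbf{Main obstacle and fallback.} The hardest step is making sure the hypotheses of the cited criterion are really satisfied. It may require more than non-commutation, for instance that the second element shares no proper invariant subspace with the eigenspaces of the first, or a real-splitting assumption that fails here. If that happens, I would instead exploit the unipotent elements
\[
	A_4(k)A_4(l)^{-1} = I + (k-l)\, e_1 w^{\top}
\]
for a fixed vector $w$. This identity holds because $A_4(k)-A_4(l)$ has rank one and is supported in the first row. The Zariski closure $G$ then contains a one-parameter unipotent subgroup. Since $A_4(k)$ has irreducible characteristic polynomial with Galois group $S_4$, $G$ acts irreducibly, in fact strongly irreducibly. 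An irreducible connected algebraic subgroup of $\SL(4,\RR)$ containing a transvection is all of $\SL(4,\RR)$, by the classical characterization of linear groups generated by transvections. This would complete the proof.
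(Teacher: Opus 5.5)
Your main line is the paper's proof. The paper takes $k=2$: the discriminant is $-643$ and the resolvent cubic $x^3-2x-5$ has no rational root, so the Galois group is $S_4$. It then pairs $A_4(2)$ with the non-commuting, infinite-order element $A_4(1)$ and applies \cite[Theorem~1.5]{Rivin}.

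Your choice $k=3$ works just as well. The polynomial $p_3=x^4-x^3-3x+1$ has no rational root and no factorization into integer quadratics, its discriminant $-2696$ is not a square, and modulo $3$ it factors as $(x+1)(x^3+x^2+2x+1)$, which gives a $3$-cycle.

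That criterion needs no real splitting; the paper applies it to $A_4(2)$, whose discriminant is negative. So your fallback is unnecessary, which is fortunate because as written it is false in two places:
\begin{itemize}
\item $\mathrm{Sp}(4,\RR)$ is a connected irreducible subgroup of $\SL(4,\RR)$ containing transvections. You would have to exclude it, e.g.\ because $p_k$ is not reciprocal for $k\ge 2$.
\item Irreducibility of $p_k$ over $\QQ$ does not make $G$ irreducible over $\RR$, since $A_4(k)$ has real eigenlines.
\end{itemize}

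Finally, if you take $l=1$ as the paper does, the infinite order of $A_4(1)$ does not come from a root larger than $1$, since $p_1=(x-1)^2(x^2+x+1)$. It comes from the nontrivial Jordan block at the eigenvalue $1$.
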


\begin{proof}
First we check that characteristic polynomial $A_4(2)$ has Galois group $S_4$.
This follows from the fact that the discriminant of the characteristic
polynomial is not a square of an integer (since it is equal to $-643$) and the
resolvent has no integer roots (it is equal to $x^3-2x-5$), see~\cite{MRFS}. 

Now we check that $A_4(1)$ has infinite order and $A_4(1)$ and $A_4(2)$ do not
commute. Then, the statement about Zariski density follows from~\cite[Theorem
1.5]{Rivin}. 

An alternative way to establish similar result is to find a Galois pinching
element as above. One can take $A_4^p = A_4(1)A_4(2)A_4(3)$ whose discriminant
is equal to $916$. It is easy to see that it satisfies all the requirements for
the Galois pinching matrix. In the same way as above we conclude that the
conditions of~\cite[Theorem 1.5]{Rivin} are satisfied. 
\end{proof}

\begin{remark}
The difference between the two approaches above is the following: the matrix
$A_4(2)$ is such that its characteristic polynomial has $S_4$ as a Galois group
but it is not pinching while $A_4^p = A_4(1)A_4(2)A_4(3)$ is pinching itself. 
\end{remark}

\subsection{End of the proof}
Since we have checked all the conditions of \Cref{thm:HubertMatheus}, we have
shown that $\mu$-almost every BT ITM is weak mixing with respect to any Gibbs
measure $\mu$.

\Cref{thm:HubertMatheus} uses the \emph{Veech
criterion}~\cite[Section~7]{Veech:criterion}
(and~\cite[Theorem~6.1]{AvilaForni}). Veech criterion was originally established
for interval exchange transformations and is known to hold for $S$-adic
systems~\cite[Proposition~5.1]{ArbuluDurand}, however, for the sake of
completeness, we sketch its explicit proof for interval translation mappings. We
will focus on Bruin-Troubetzkoy case since for Bruin ITMs the argument works in
a similar way. 

\begin{figure}[t]
	\centering
	\def\svgwidth{0.8\textwidth}
	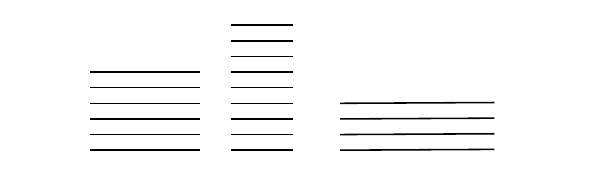
	\caption{A schematic picture of the towers $\cT^{(n)}_1$, $\cT^{(n)}_2$ and
	$\cT^{(n)}_3$ over ther original interval $I$, after $n$ steps of the
	induction $\rauzy$}
	\label{fig:towers}
\end{figure}

Weak mixing is equivalent to the absence of non-constant eigenfunctions, that is
measurable solutions $\phi\colon I \to \CC$ of the equation
\begin{equation}\label{eq:measurable_eigenvalue}
	\phi (Tx) = e^{2\pi i t} \phi(x),
\end{equation}
for any $t\in\RR$. This is equivalent to the fact that $T$ is ergodic and that,
for any $t\in\RR\setminus\ZZ$, there are no non-zero measurable solutions
to~\eqref{eq:measurable_eigenvalue}. We remark that, by~\cite[Theorem~1.3]{AHS},
(unique) ergodicity holds for $\mu$-almost every transformation $T$, hence it is
not an obstruction to weak mixing.

We recall that a \emph{Rokhlin tower} of base $F$ and height $h$ for the
transformation $T$ is a collection of disjoint \emph{levels} $F$, $TF$,
$\dotsc$, $T^{h-1}F$, see, e.g.~\cite{FHZ}. 

Let $\Bigl\{I^{(n)}_j\Bigr\}_{j=1,2,3}$ be the sequence of intervals of
continuity for the transformation $\rauzy^n T$. Consider the towers
$\cT^{(n)}_1$, $\cT^{(n)}_2$, $\cT^{(n)}_3$, above them, see \Cref{fig:towers}.
Since they are obtained via an induction process, the tower $\cT_i^{(n)}$ are
indeed Rokhlin towers for the BT ITM $T$. Hence, the whole interval $I$ is the
union of three disjoint Rokhlin towers. We say that the towers are
\emph{balanced} if the mass of each tower is bounded away from $0$ and points in
the base of the towers (which corresponds to the intervals $I^{(n)}_1$,
$I^{(n)}_2$ and $I^{(n)}_3$) follow the same itinerary for a proportion of times
which is bounded away from $0$, see \Cref{fig:balanced_towers}. We denote the
set of balanced times $\cB$.

We remark that, for $\mu$-almost every BT ITM $T$, the set $\cB$ is
infinite since, by construction of the measure $\mu$, each fixed matrix appears
as the induction matrix infinitely many times. Hence, we can assume that the
mass of the towers are bounded away from $0$. Since any acceleration of the
substitutions is left-proper, we can assume that all the points in the base of
the towers share the same itinerary for a positive proportion of times.
Geometrically, this corresponds to the fact that we are inducing always on the
rightmost interval of continuity, and hence every interval of continuity
$I^{(n)}_j$ is contained in the third of the original intervals, namely $I_3$.

Then, Veech criterion is as follows.

\begin{theorem}[Veech Criterion,~\cite{Veech:criterion}]
	In the previous notation. If there exists a non-constant measurable solution
	$\phi\colon I \to \CC$ to the equation
	\[
		\phi(Tx) = e^{2\pi i} \phi(x),
	\]
	for $t\in\RR$, then
	\[
		\lim_{\substack{n\to\infty \\ n\in\cB}}{\norm*{\widetilde\rauzy^n \cdot t(1, 1, 1)}} = 0
	\]
	in $\RR^3/ \ZZ^3$.
\end{theorem}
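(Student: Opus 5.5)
We prove the Veech criterion by the classical tower argument. Throughout, $t(1,1,1)$ is read as $t$ times the vector of return times: the tower $\cT^{(n)}_j$ has height $h^{(n)}_j$, and $h^{(n)} = \widetilde\rauzy^{\,n}(1,1,1)$ by the height recursion of \Cref{sec:bandcomplexes}. The goal is to show that $t h^{(n)}_j$ is close to an integer for each $j$ and each balanced time $n\in\cB$.

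Let $\phi$ be a non-constant measurable eigenfunction with $\phi\circ T = e^{2\pi i t}\phi$. By unique ergodicity (\cite[Theorem~1.3]{AHS}), $\abs{\phi}$ is $T$-invariant, hence constant, and we normalise $\abs{\phi}=1$. Fix $\epsilon>0$. By Lusin's theorem there is a compact set $K$ of measure at least $1-\epsilon$ on which $\phi$ is uniformly continuous. Since the levels of the towers $\cT^{(n)}_j$ have lengths $\abs{I^{(n)}_j}\to 0$, the conditional expectation of $\phi$ on the partition into levels converges to $\phi$ in $L^2$ (martingale or density argument). Hence for $n$ large, on most levels (in measure) $\phi$ is $\epsilon$-close in $L^2$ to a constant $c_{j,\ell}$.

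Now use the tower structure. For $x$ in the base $I^{(n)}_j$, the first return to $\bigcup_i I^{(n)}_i$ occurs at time $h^{(n)}_j$, and $\phi(T^{h^{(n)}_j}x)=e^{2\pi i t h^{(n)}_j}\phi(x)$. At balanced times, \Cref{fig:balanced_towers} and left-properness give two facts: each tower carries mass bounded below by some $\delta>0$, and a proportion at least $\delta$ of each tower lies in the top part, where the points follow the same itinerary. In particular, for each $j$ there is a set $E_j\subset I^{(n)}_j$ of relative measure at least $\delta$, together with a common interval $J$ (contained in the original $I_3$) such that $T^{h^{(n)}_j}E_j$ and $E_j$ both lie in a union of a few levels on which $\phi$ is nearly constant. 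Comparing $\phi$ on $E_j$ with $\phi$ on $T^{h^{(n)}_j}E_j$, and using that $\phi$ is nearly constant on the relevant levels, we obtain $\abs{e^{2\pi i t h^{(n)}_j}-1}\le C\epsilon/\delta$. Letting $\epsilon\to0$ along $n\in\cB$ gives $\norm{t h^{(n)}}_{\RR^3/\ZZ^3}\to0$.

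The main obstacle is the last step: turning the $L^2$ smallness of $\phi - \text{(level constants)}$ into a pointwise statement on a set of return points. This is exactly where balance is needed. The points of $E_j$ and their returns must land in levels of comparable, non-negligible measure in every tower, and this is guaranteed only at times $n\in\cB$. I would handle it by Chebyshev's inequality. The set of levels on which the oscillation of $\phi$ exceeds $\sqrt{\epsilon}$ has measure at most $\sqrt\epsilon$. Since the balanced sets have measure at least $\delta^2$, for $\epsilon$ small enough they cannot be contained in this bad set, which yields the estimate uniformly in $j$.
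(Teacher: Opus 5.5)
Your comparison step has a genuine gap, and it is exactly the step you flag as the main obstacle. Chebyshev's inequality deals only with the measure issue; the real problem is a different one.

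Approximating $\phi$ by constants $c_{j,\ell}$ on the levels $T^\ell I^{(n)}_j$ of the step-$n$ towers cannot by itself control $t h^{(n)}_j$. The eigenvalue equation only links the constant on the level containing $x$ to the constant on the level containing $T^{h^{(n)}_j}x$, and these are in general different levels. If $y\in I^{(n)}_j$ returns into $I^{(n)}_i$, then $T^\ell y$ lies in level $\ell$ of $\cT^{(n)}_j$, while $T^{h^{(n)}_j}T^\ell y$ lies in level $\ell$ of $\cT^{(n)}_i$. All you learn is $c_{i,\ell}\approx e^{2\pi i t h^{(n)}_j}c_{j,\ell}$, a relation between unrelated constants.

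Nothing in the balance condition says that a positive proportion of $I^{(n)}_j$ returns to $I^{(n)}_j$ itself. Your phrase ``a union of a few levels on which $\phi$ is nearly constant'' hides the problem: the approximation gives one constant per level, not one constant for the union.

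There are two further issues. First, your $E_j$ sits inside the bases, whose total measure tends to $0$. No measure argument can then guarantee that $E_j$ meets good levels, or meets your Lusin set, which you introduce but never use. The comparison must be made on a region of the tower whose mass is bounded below. Second, left-properness gives a common \emph{beginning} of the itineraries, i.e.\ the bottom levels of the towers, not the top.

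The missing idea is that $x$ and $T^{h^{(n)}_j}x$ must be close in space, or equivalently lie in one atom of a partition on which $\phi$ is nearly constant. The paper's route works as follows.

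Take $x=T^\ell y$ with $y\in I^{(n)}_j$ and $\ell<H$, where $H$ is the length of the itinerary shared by all of $I^{(n)}$. At balanced times $H$ is at least a fixed proportion of $h^{(n)}_j$, and these points $x$ form the set $B^{(n)}_j$ of mass $m>0$. Both $x$ and $T^{h^{(n)}_j}x=T^\ell(T^{h^{(n)}_j}y)$ lie in the interval $T^\ell I^{(n)}$, so $d(x,T^{h^{(n)}_j}x)\le\abs{I^{(n)}}\to0$.

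Now take a compact $K$ with $\mu(K)>1-\epsilon$ on which $\phi$ is uniformly continuous. Invariance gives $\mu(K\cap T^{-h^{(n)}_j}K)\ge1-2\epsilon$. So for $2\epsilon<m$ some such $x$ has both $x$ and $T^{h^{(n)}_j}x$ in $K$. Since $\abs{\phi}=1$, this makes $\abs{e^{2\pi i t h^{(n)}_j}-1}=\abs{\phi(T^{h^{(n)}_j}x)-\phi(x)}$ small.

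If you want to keep your $L^2$ approach, approximate $\phi$ on the levels of an earlier step $n'$ with $I^{(n)}\subset I^{(n')}_3$; left-properness of $\chi_k\circ\chi_l$ gives $n'=n-2$. For $\ell<h^{(n')}_3$, both $T^\ell I^{(n)}_j$ and its image under $T^{h^{(n)}_j}$ lie in the single level $T^\ell I^{(n')}_3$. At balanced times each occupies a proportion of that level bounded below. Chebyshev over the levels of $\cT^{(n')}_3$, followed by averaging $\phi$ over these two subsets, then gives the estimate. This version also needs $T^{h^{(n)}_j}$ to carry $\mu$ on $T^\ell I^{(n)}_j$ to $\mu$ on its image, which deserves a word for a non-invertible ITM. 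The Lusin route only uses $\mu(T^{-h}A)=\mu(A)$.
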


\begin{figure}[tb]
	\centering
	\def\svgwidth{0.8\textwidth}
	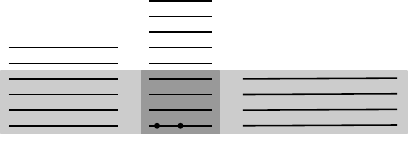
	\caption{The towers as in the sketch of the proof of Veech's criterion. The 
	portion of the towers which shares the beginning itinerary is shaded. With a
	darker shade is the set $B^{(n)}_2$, which has positive mass.}
	\label{fig:balanced_towers}
\end{figure}

\begin{proof}[Sketch of proof]
	We denote by $h^{(n)} = \Bigl(h^{(n)}_1, h^{(n)}_2, h^{(n)}_3\Bigr)$ the
	heights of the towers. By definition, $h^{(n)} = \widetilde\rauzy^n (1, 1,
	1)$. Let $\phi$ be an eigenfunction for the eigenvalue $e^{2\pi i t}$. If
	$x\in I^{(n)}_j$, then $T^{h^{(n)}_j} x \in I^{(n)}$, and their distance is
	\[
		d\Bigl(x, T^{h^{(n)}_j} x\Bigr) \le \abs*{I^{(n)}} \to 0,
	\]
	as $n\to\infty$. We have that
	\[
		\phi\Bigl(T^{h^{(n)}_j} x\Bigr) = e^{2\pi i h^{(n)}_j t} \phi (x).
	\]
	Let $B^{(n)}_j$ the proportion of the tower $\cT^{(n)}_j$ which shares the
	same itinerary of the other towers (see \Cref{fig:balanced_towers}). By
	assumption, $B^{(n)}_j$ has positive mass $m = \mu \Bigl(B^{(n)}_j\Bigr)>
	0$. Thanks to Lusin's theorem, the function $\phi$ is continuous on a set of
	arbitrarily large mass. Hence, we can find a point $y\in B^{(n)}_j$ which is
	a continuity point for $\phi$, and conclude that $\phi\Bigl(T^{h^{(n)}_j}
	x\Bigr) \approx \phi (x)$. Finally, this implies that $e^{2\pi i h^{(n)}_j
	t} \to 1$ as $n\to\infty$ inside $\cB$. In other words, 
	\[
		\norm*{h^{(n)} t} = \norm*{\widetilde\rauzy^n \cdot t(1, 1, 1)} \to 0
	\]
	in $\RR^3/\ZZ^3$ as $n\in\cB$ goes to infinity, as we wanted.
\end{proof}

We can now conclude.

By \Cref{lemma:Zariski_density}, the matrices $A_3(k)$ of the cocycle are twisting
in the sense of Avila and Viana~\cite{AvilaViana}. In fact, using a finite set
of matrices inside the semigroup generated by the cocycle, any affine $L$ can be
sent to any arbitrary vector. In particular, for a generic parameter, the line
$L$ does not intersect the central stable manifold of the renormalization. Then,
reasoning as in~\cite[Theorem~6.4]{AvilaForni}, for $\mu$-almost every BT ITM, 
\[
	\limsup_{\substack{n\to\infty \\ n\in\cB}}{\norm*{\widetilde\rauzy^n \cdot t(1, 1, 1)}} > 0.
\]

\section{Bruin ITMs on $d$ intervals}\label{sec:dBruin_ITMs} 

\subsection{Definitions}
We begin by briefly recalling the definition of Bruin ITMs on $d$ intervals (or
$d$-Bruin, for short), for $d\ge 3$. Let $U_{d-1} = \Set{\alpha =
(\alpha_1,\dotsc,\alpha_{d-1})\in\RR^{d-1} \given 1 \ge \alpha_1 \ge \cdots \ge
\alpha_{d-1}\ge 0}$. Given $\alpha\in U_{d-1}$, we define the Bruin ITM on $d$
intervals of continuity by
\[
	T_\alpha(x) = \begin{cases}
		x + \alpha_1, & \text{for } x\in[0,1-\alpha_1),\\
		x + \alpha_i, & \text{for } x\in[1-\alpha_{i-1},1-\alpha_i), \text{ if } 1<i < d,\\
		x + \alpha_{d-1}-1, & \text{for } x\in[1-\alpha_{d-1},1).\\
	\end{cases}
\]
As usual, we change the parameter to the lengths of continuity of the intervals:
\begin{align*}
	\lambda_1 &= 1-\alpha_1,\\
	\lambda_i &= \alpha_{i-1} - \alpha_i, \qquad \text{for } 1<i<d,\\
	\lambda_d &= \alpha_{d-1}.
\end{align*}
In \Cref{sec:Bruin_ITMs}, we have explained in detail the construction of the
induction, introduced in~\cite{SC}, for $4$-Bruin ITMs. The same three cases
apply here, see~\cite[Section~2.2]{SC}.

Equipped with this induction ``á la Rauzy'', one can construct a probability
measure $\mu$, invariant under the induction, which is supported on $d$-Bruin
ITMs of infinite type. With respect to this measure, almost every $d$-Bruin ITM
is uniquely ergodic, see~\cite[Section~2.2]{SC}.

Following~\cite{Br}, one can construct an $S$-adic presentation of the
transformation $T$, given by substitutions $\chi_k$, for $k\in\NN$, $k\ge 1$.
More concretely, the substitutions are given by the direct generalization of
\cref{eq:BT_subs,eq:4Bruin_subs},
\begin{equation}\label{eq:Bruin_subs}
	\chi_k\colon\begin{cases}
		1 &\mapsto 2,\\
		2 &\mapsto 3,\\
		\vdots &\vdots \\
		d-1 &\mapsto d1^k,\\
		d &\mapsto d1^{k-1},
	\end{cases}
\end{equation}
where we have omitted the dependence on $d$. The incidence matrix of these
substitutions is given by
\begin{equation}\label{eq:Ak}
    A_d (k) = \begin{pNiceArray}{ccc|cc}
        0 & \cdots & 0 & k & k-1 \\
        \hline
        \Block{3-3}<\LARGE>{I_{d-2}} & & & 0 & 0 \\
        & & & \vdots & \vdots \\
        & & & 0 & 0 \\
        0 & \cdots & 0 & 1 & 1        
    \end{pNiceArray}
\end{equation}
where $I_n$ is the $n\times n$ identity matrix. As in \Cref{sec:bandcomplexes},
one can show that the transpose of these matrices coincidence with the matrices
of the of the accelerated extension of the Rauzy induction that acts on the
heights of the bands of the suspension of $T$.

\subsection{Weak mixing for $d$-Bruin ITMs}
The strategy carried out in the previous section can be applied to the general
case of Bruin ITMs on $d$ intervals in a straightforward manner. For instance,
from \cref{eq:Bruin_subs}, it is clear that by composing $d-1$ such
substitutions one obtains a left-proper one, similar to \cref{eq:left_proper}.
The following result, which generalizes \Cref{lemma:conjugation_4} to arbitrary
$d$, follows from a direct computation, which we leave to the reader. 

\begin{lemma}
	Let $d\ge 3$ fixed. For $k\ge 1$ integer, consider the matrices
	\[
		Z_d(k) = \begin{pNiceArray}{c|ccc}
        1 & 1 & \cdots & 1 \\
        \hline
        0 & \Block{3-3}<\LARGE>{I_{d-1}} & & \\
        \vdots & & & \\
        0 & & &
    \end{pNiceArray}^{k-1}
	\cdot
	\begin{pNiceArray}{ccc|cc}
        0 & \cdots & 0 & 1 & 0 \\
        \hline
        \Block{3-3}<\LARGE>{I_{d-2}} & & & 0 & 0 \\
        & & & \vdots & \vdots \\
		& & & 0 & 0 \\
        0 & \cdots & 0 & 1 & 1 
    \end{pNiceArray},
	\]
	corresponding to the matrices changing the lenghts of the intervals of
	continuity under the acceleration of the induction $\rauzy$. Then, $Z_d(k) =
	J_d A_d(k)^{-1}J_d^{-1}$, where
	\[
		J_d = \begin{pNiceArray}{cc|c}
			\Block{3-2}<\Large>{R_{d-2}} & & 0 \\ 
			& & \vdots \\
			& & 0 \\
			\hline
        	-1 & \cdots & -1 
    \end{pNiceArray},
	\]
	with $R_n$ being the $n\times n$ matrix with $1$ on the antidiagonal and $0$
	outside it.
\end{lemma}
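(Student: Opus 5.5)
The plan is to avoid inverting $A_d(k)$ and instead verify the equivalent identity
\[
	Z_d(k)\, J_d\, A_d(k) = J_d ,
\]
on each standard basis vector $e_1,\dotsc,e_d$. Since $J_d$ is invertible (block triangular with invertible diagonal blocks), this is equivalent to $Z_d(k) = J_d A_d(k)^{-1} J_d^{-1}$. For the shape of $J_d$ I will use the one in \Cref{lemma:conjugation_4}: the antidiagonal block sits in the first $d-1$ rows and columns, the last column is $-e_d$, and the last row is all $-1$. Concretely,
\[
	J_d e_j = e_{d-j} - e_d \quad (1\le j\le d-1), \qquad J_d e_d = -e_d .
\]
The dimension of the antidiagonal block in the statement should be read this way so that it is consistent with $J_4$.

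First I will write down the columns of the matrices involved. Let $D$ be the first factor of $Z_d(k)$. Then $D^{k-1}$ differs from the identity only in its first row, which is $(1,k-1,\dotsc,k-1)$. The second factor $C$ acts by
\[
	Ce_j=e_{j+1} \ (j\le d-2), \qquad Ce_{d-1}=e_1+e_d, \qquad Ce_d=e_d .
\]
Hence
\[
	Z_d(k)e_j = e_{j+1}+(k-1)e_1 \ (j\le d-2), \qquad Z_d(k)e_{d-1}=ke_1+e_d, \qquad Z_d(k)e_d=(k-1)e_1+e_d .
\]
This agrees with $Z_4(k)$ for $d=4$. Reading off \eqref{eq:Ak},
\[
	A_d(k)e_j=e_{j+1} \ (j\le d-2), \qquad A_d(k)e_{d-1}=ke_1+e_d, \qquad A_d(k)e_d=(k-1)e_1+e_d .
\]

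Then I will check the identity in three cases.

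\textbf{Case $1\le j\le d-2$.} Here $J_dA_d(k)e_j = e_{d-j-1}-e_d$. Applying $Z_d(k)$ gives
\[
	\bigl(e_{d-j}+(k-1)e_1\bigr) - \bigl((k-1)e_1+e_d\bigr) = e_{d-j}-e_d = J_de_j ,
\]
because the $(k-1)e_1$ terms cancel.

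\textbf{Case $j=d-1$.} Here $J_dA_d(k)e_{d-1} = k e_{d-1}-(k+1)e_d$. Applying $Z_d(k)$ gives
\[
	k(ke_1+e_d)-(k+1)\bigl((k-1)e_1+e_d\bigr) = e_1-e_d = J_de_{d-1} .
\]

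\textbf{Case $j=d$.} Here $J_dA_d(k)e_d=(k-1)e_{d-1}-ke_d$, which $Z_d(k)$ maps to $-e_d = J_de_d$.

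The only genuinely delicate step is this last bookkeeping at columns $d-1$ and $d$. There the $k^2$ terms must cancel exactly, and that cancellation is what makes the conjugating matrix independent of $k$. I would double-check it against the explicit $d=4$ case in \Cref{lemma:conjugation_4}. As a consequence, $Z_d(k)$ and $A_d(k)^{-1}$ have the same Lyapunov spectrum along any sequence $\uk$, which is what is needed to transfer \Cref{inequalityonPF} to $d$-Bruin ITMs.
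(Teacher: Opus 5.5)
Your proposal is correct and is essentially the paper's argument: the paper only says the identity "follows from a direct computation, which we leave to the reader," and checking the equivalent identity $Z_d(k)J_dA_d(k)=J_d$ column by column, as you do, is a clean way to carry that computation out. Your reading of the antidiagonal block as $(d-1)\times(d-1)$ is also the right one. The label $R_{d-2}$ in the statement is a typo: $J_d$ must be $d\times d$, and $J_4$ in \Cref{lemma:conjugation_4} has a $3\times 3$ antidiagonal block.
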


The delicate point to implement the strategy of \Cref{sec:avilaforni} is to show
that the subgroup generated by the matrices $\Set{A_d(k) \given k\in\NN}$ is
Zariski dense inside $\SL(d,\RR)$. Previously, for pedagogical reasons, we have
shown this via ad hoc arguments, which do not easily generalize.

The aim of this section is to show a stronger statement.

\begin{theorem}\label{thm:Ak_generate_SL} 
	Let $d\ge 3$ fixed. Let $A_d(k)$ be the matrices given by \cref{eq:Ak}, for
	$k\ge 1$ integer. Then, the group generated by the matrices $A_d(k)$
	contains $\SL(d,\ZZ)$.
\end{theorem}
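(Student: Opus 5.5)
The plan is to extract elementary transvections from the generators and then use the classical fact that, for $d\ge 3$, $\SL(d,\ZZ)$ is generated by the elementary matrices $E_{ij}:=I+e_ie_j^{\top}$, $i\neq j$. Let $G$ be the group generated by $\Set{A_d(k)\given k\ge1}$. It is enough to show $E_{ij}\in G$ for all $i\ne j$.

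\emph{Step 1: one transvection.} From \cref{eq:Ak}, $A_d(k+1)-A_d(k)=e_1(e_{d-1}+e_d)^{\top}$. Hence
\[
A_d(k+1)A_d(k)^{-1}=I+e_1 u^{\top},\qquad u^{\top}=(e_{d-1}+e_d)^{\top}A_d(k)^{-1}.
\]
To identify $u$, solve $u^{\top}A_d(k)=(e_{d-1}+e_d)^{\top}$ column by column. The columns of $A_d(k)$ are $e_2,\dots,e_{d-1}$, then $ke_1+e_d$, then $(k-1)e_1+e_d$. The first $d-2$ columns force $u_2=\dots=u_{d-1}=0$. The last two give $ku_1+u_d=1=(k-1)u_1+u_d$, so $u_1=0$ and $u_d=1$. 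Therefore $A_d(k+1)A_d(k)^{-1}=E_{1d}\in G$, independently of $k$.

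\emph{Step 2: conjugation.} For any $g\in G$ we have $gE_{ij}g^{-1}=I+(ge_i)(e_j^{\top}g^{-1})$. Take $g=A:=A_d(1)$, whose columns are $e_2,\dots,e_{d-1},e_1+e_d,e_d$. Then $Ae_i=e_{i+1}$ for $i\le d-2$, and $Ae_d=e_d$. I will compute the rows of $A^{-1}$ once. Conjugating $E_{1d}$ by $A^m$ for $m=0,\dots,d-2$ should give $I+e_{m+1}(e_d^{\top}A^{-m})$. The row vectors $e_d^{\top}A^{-m}$ stay of a simple, explicit shape; $e_d^{\top}A^{-1}$ is a $\pm$ combination of $e_{d-1}^\top$ and $e_d^\top$. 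Combining such conjugates with products and inverses of $E_{1d}$-type elements, I expect to isolate $E_{i,d}$ and $E_{i,d-1}$ for $1\le i\le d-2$. I would also conjugate by $A^{-1}$ and by $A_d(2)$ to produce transvections whose nonzero entry lies in row $d-1$ or $d$.

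\emph{Step 3: closing up under commutators.} Use $[E_{ij},E_{jl}]=E_{il}$ for distinct $i,j,l$, which requires $d\ge3$. From the $E_{i,d}$ and $E_{i,d-1}$, together with at least one transvection in row $d$ and one in row $d-1$ with column index $\le d-2$, commutators generate all $E_{il}$. Then $\SL(d,\ZZ)\subseteq G$.

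The main obstacle is Step 2: tracking how the last two coordinates, which $A$ mixes via $e_1+e_d$, transform under powers of $A$. The goal is to produce genuinely elementary matrices, rather than transvections $I+vw^\top$ with $v,w$ non-coordinate vectors, that have nonzero entries in rows $d-1$ and $d$. If direct conjugation only gives non-elementary transvections $I+e_iw^\top$ with $w^\top e_i=0$, I will factor them as products $\prod_j E_{ij}^{w_j}$, since these commute when the row index $i$ is fixed. This allows peeling off individual $E_{ij}$ once enough of them are known, and I would check small $d$, say $d=3,4$, by hand first to fix the pattern.
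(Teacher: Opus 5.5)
\textbf{Verdict: genuine gap.} Your Step 1 is correct and is exactly the paper's starting point (the paper writes $T_{ij}$ for your $E_{ij}$). Your overall plan is also the paper's: conjugate by $A=A_d(1)$, then close up with $[E_{ij},E_{jl}]=E_{il}$. The gap is Step 2, which carries all the content. You leave it as an expectation, and that expectation rests on a miscomputation.

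\textbf{The computation in Step 2 is wrong.} Since $A^{-1}e_1=e_{d-1}-e_d$, $A^{-1}e_j=e_{j-1}$ for $2\le j\le d-1$, and $A^{-1}e_d=e_d$, the last row of $A^{-1}$ is $e_d^\top-e_1^\top$, not a combination of $e_{d-1}^\top$ and $e_d^\top$. More generally $e_d^\top A^{-m}=e_d^\top-e_1^\top-\dots-e_m^\top$. Writing $g_{m+1}$ for $A^mE_{1d}A^{-m}$ with $0\le m\le d-2$, this gives
\[
g_{m+1}=I+e_{m+1}(e_d-e_1-\dots-e_m)^\top=E_{m+1,d}\,E_{m+1,1}^{-1}\cdots E_{m+1,m}^{-1}.
\]
So no $E_{i,d-1}$ appears. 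Your fallback of ``peeling off factors once enough are known'' is circular, since at this stage $E_{1d}$ is the only elementary matrix you have.

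\textbf{The missing idea is to conjugate $E_{1d}$ by these non-elementary elements.} For $2\le j\le d-1$ one gets $g_jE_{1d}g_j^{-1}=I+(e_1-e_j)e_d^\top=E_{1d}E_{jd}^{-1}$, so $E_{jd}\in G$. Then $E_{21}=g_2^{-1}E_{2d}$, and $A^mE_{21}A^{-m}=E_{m+2,m+1}$ gives the subdiagonal up to $E_{d-1,d-2}$.

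\textbf{You still need an element that moves the last row.} Every transvection obtained so far has row index at most $d-1$, hence last row $e_d^\top$, and so does everything they generate. Your remark about conjugating by $A^{-1}$ or $A_d(2)$ does not say what comes out. Moreover $A^{-1}E_{1d}A$ is not of the form $I+e_iw^\top$, so your factorization fallback does not cover it. The paper's device is $X=A^{-1}E_{1d}A=I+(e_{d-1}-e_d)(e_{d-1}+e_d)^\top$. This is the identity except on the last two coordinates, where it equals $\begin{pmatrix}2&1\\-1&0\end{pmatrix}$, and one checks $X^{-1}E_{d-1,d}X=E_{d,d-1}^{-1}$. From there, $[E_{d,j+1},E_{j+1,j}]=E_{d,j}$ yields the whole last row, and $[E_{jd},E_{dl}]=E_{jl}$ yields everything else.

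\textbf{Step 3 is false as stated.} It fails once $d\ge5$, and also for $d=4$ if your two extra transvections share a column index. Suppose $l\le d-2$ is the column index of neither extra transvection. Then every element of your list is $I+e_ie_j^\top$ with $j\ne l$, and so fixes $e_l$. Hence the group they generate, and in particular all their iterated commutators, fixes $e_l$ and cannot contain $\SL(d,\ZZ)$. The input must involve every column index. One way is the full subdiagonal chain as above. Another is to also conjugate by $A$, which sends $E_{d,j}$ to $E_{d,j+1}$ for $j\le d-2$.
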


More precisely, since $\det A_d(k) = (-1)^{d-1}$, the group generated is either
equal to $\SL(d,\ZZ)$ or contains it as an index $2$ subgroup. We remark that,
in particular, the above result gives a different, and more general, proof of
the statements in \Cref{sec:Zarisky_density3-4}.

From \Cref{thm:Ak_generate_SL}, \Cref{thm:wmbruin} follows immediately, using
the strategy of the previous section. The proof of \Cref{thm:Ak_generate_SL}
occupies the rest of the section.

Before we begin the proof, we recall some standard facts on $\SL(d,\ZZ)$. Let
$E_{ij}$ be the $(i,j)$-elementary matrix, with entry $(i,j)$ equal to $1$ and
the rest equal to $0$. Moreover, let $T_{ij} = I_d + E_{ij}$. The set
\[
    \Set{T_{ij} \given 1\le i \neq j \le d},
\]
generate $\SL(d,\ZZ)$, see e.g.,~\cite[Corollary~10.3]{Milnor}. They are called
the \emph{Steinberg generators}. A direct computation shows that
these elements satisfy the following commutation relations:
\begin{equation}\label{eq:commutators_Tij}
    \begin{split}
        [T_{ij}, T_{km}] &= I_d \qquad \text{if $j\neq k$ and $i \neq m$},\\
        [T_{ij}, T_{jk}] &= T_{ik} \qquad \text{if $i$,$j$ and $k$ are distinct}. 
    \end{split}
\end{equation}

\begin{proof}[Proof of \Cref{thm:Ak_generate_SL}]
Let $G_d \le \GL(d,\ZZ)$ the group generated by the $A_d(k)$. We want to show
that all the Steinberg generators are contained in $G_d$. To help the reader, we
break the proof into steps.

\begin{enumerate}
\item The generators corresponding to the last column are in $G_d$: $T_{jd}\in
	G_d$ for $1\le j \le d-1$. 
	
We begin by showing that $T_{1d}\in G_d$. We have that
\begin{equation}\label{eq:A1_inverse}
    A_d (1)^{-1} = \begin{pNiceArray}{c|ccc|c}
        0 & \Block{3-3}<\LARGE>{I_{d-2}} & & & 0 \\
        \vdots & & & & \vdots \\
        0 & & & & 0 \\
        \hline
        1 & 0 & \cdots & 0 & 0 \\
        -1 & 0 & \cdots & 0 & 1        
    \end{pNiceArray}.
\end{equation}
Then, $A_d(2) A_d(1)^{-1} = T_{1d} \in G_d$.
More generally, one can show that $A_d(k+1) A_d(k)^{-1} = T_{1d}$, for any $k\ge 1$.

Next, we show that $T_{jd}\in G_d$ for $2\le j \le d-1$. The matrix $A_d(1)$
acts as a shift on the first $d-2$ basis vectors, sending $e_j$ to $e_{j+1}$,
for $1\le j\le d-2$. Hence,
\[
    A_d(1) E_{1d} = E_{2d}.
\]
Moreover, by \cref{eq:A1_inverse}, $E_{2d} A_d(1)^{-1} = E_{2d} - E_{21}$.
Summing up,
\begin{equation}\label{eq:T2d}
    A_d(1) T_{1d} A_d(1)^{-1} = I_d + A_d(1) E_{1d} A_d(1)^{-1} = I_d + E_{2d} - E_{21} = T_{2d}T_{21}^{-1}.
\end{equation}
We isolate $T_{2d}$ as follows:
\[
    (T_{2d} T_{21}^{-1}) T_{1d} (T_{21} T_{2d}^{-1}) = T_{21}^{-1} T_{1d} T_{21} = [T_{21}^{-1}, T_{1d}] \cdot T_{1d}^{-1} = T_{2d}^{-1} T_{1d},
\]
where we have used \cref{eq:commutators_Tij} in the first and last equalities.
Since $T_{1d}\in G_d$, we have that $T_{2d}\in G_d$ too.

We can now repeat the above argument, starting with $T_{2d}$ instead of
$T_{1d}$. Iterating, we obtain all the elements of the form $T_{jd}$ for $1\le j
\le d-1$. 

\item The generators corresponding to the first column are in $G_d$: $T_{j1}
	\in G_d$ for $2 \le j \le d-1$.

\Cref{eq:T2d} implies that $T_{21} \in G_d$, since we have showed that $T_{2d}$
is in $G_d$. The analogous computations yield that, $T_{jd}\in G_d$ for $2\le j
\le d-1$. 

\item The lower diagonal elements $T_{j+1j}$ for $2\le j \le d-2$ are in $G_d$:

Since the matrix $A_d(1)$ acts as a shift, we have
\[
    A_d(1) T_{21} = T_{32}
\]
\[
    A_d(1) T_{21} A_d(1)^{-1} = T_{32}.
\]
Iterating, we obtain all the lower diagonal elements, except for $T_{dd-1}$.
Unfortunately, the matrix $A_d(1)$ does not act as a shift on the penultimate
basis vector, so we need a different reasoning. By \cref{eq:Ak,eq:A1_inverse},
we have that
\[
    X = A_d(1)^{-1} T_{1d} A_d(1) = 
    \begin{pNiceArray}{ccc|cc}
        \Block{3-3}{I_{d-2}} & & & \Block{3-2}<\Large>{0} & \\
        & & & &  \\
		& & & &  \\
        \hline
        \Block{2-3}<\Large>{0} & & & 2 & 1 \\
        & & & -1 & 0        
    \end{pNiceArray}.
\]
We remark that the lower right $2\times 2$ block of $X$ and $T_{d-1d}$ are
respectively
\[
    \begin{pmatrix}
        2 & 1 \\
        -1 & 0
    \end{pmatrix}
    \qquad
    \text{and}
    \qquad
    \begin{pmatrix}
        1 & 1 \\
        0 & 1
    \end{pmatrix},
\]
which generate $\SL(2,\ZZ)$. Hence, we can get this block to be
$\bigl(\begin{smallmatrix} 1 & 0 \\ 1 & 1 \end{smallmatrix}\bigr)$, to finally
obtain $T_{dd-1}$:
\[
    (X^{-1} T_{d-1d} X)_{i,j=d-1}^d = \begin{pmatrix}
        0 & -1 \\
        1 & 2
    \end{pmatrix}
    \begin{pmatrix}
        1 & 1 \\
        0 & 1
    \end{pmatrix}
    \begin{pmatrix}
        2 & 1 \\
        -1 & 0
    \end{pmatrix}
    =
    \begin{pmatrix}
        1 & 0 \\
        1 & 1
    \end{pmatrix},
\]
which means that $X^{-1}T_{d-1d} X = T_{dd-1}^{-1}$ and so $T_{dd-1} \in G_d$. 

\item The generators corresponding to the last row are in $G_d$: $T_{dj}$ for
$1\le j\le d-1$ are in $G_d$.

Using \cref{eq:commutators_Tij} we can obtain all the matrices $T_{dj}$ for
$1\le j\le d-1$, going backwards from the last one:
\[
    [T_{dd-1}, T_{d-1,d-2}] = T_{dd-2},
\]
and so on.

\item The upper diagonal elements $T_{jj+1}$ for $1\le j \le d-1$ are in $G_d$:

Combining the elementary matrices with elements in the last column, with the
ones we just constructed we obtain all the elements in the upper diagonal
$T_{jj+1}$, for $1\le j\le d-1$:
\[
    [T_{jd}, T_{dj+1}] = T_{jj+1}.
\]

\item Conclusion:

The remaining elements can be obtained as follows. If $i < j$, then we obtain
$T_{ij}$ by repeatedly using the commutation relations on the set $\{T_{kk+1}\}$:
\[
    T_{ii+2} = [T_{ii+1},T_{i+1i+2}], \qquad T_{ii+3} = [T_{ii+2}, T_{i+2i+3}], \qquad \text{etc.}
\]
If $i > j$, then we obtain
$T_{ij}$ by repeatedly using the commutation relations on the set $\{T_{k+1k}\}$:
\[
    T_{i+2i} = [T_{i+2i+1},T_{i+1i}], \qquad T_{i+3i} = [T_{i+3i+2}, T_{i+2i}], \qquad \text{etc.}
\]
\end{enumerate}
This finishes the proof of \Cref{thm:Ak_generate_SL}.
\end{proof}

\section{Examples of non weakly mixing Bruin-Troubetzkoy ITMs}
In this section, we construct examples of BT ITMs which are not weakly mixing.
In order to do so, we will use once again the $S$-adic presentation of a BT ITM
$T = T_{\alpha, \beta}$ given by \cref{eq:BT_subs}. We recall that any possible
sequence $\{k_n\}_{n\in\NN}$, with $k_n\in\NN$, $k_n \ge 1$ can be realized by
some BT ITM. The results of this section are unashamedly inspired
by~\cite[Proposition~6.1 and Theorem~6.3]{FHZ}, which the reader may consult for
more details and references on systems of rank one. We will use extensively the
notion of Rokhlin tower, which we recalled in \Cref{sec:avilaforni}. In
particular, we need the following terminology. Given a Rokhlin tower of height
$h$, $F$, $TF$, $\dotsc$, $T^{h-1}F$, its \emph{name} is the word $w=w_0 \cdots
w_{h-1}$ if the level $T^iF$ is in the cylinder $[w_i]$, $0 \leq i\leq h-1$. We
will denote by $\abs{w}$ the length of the word $w$, which is equal to the height
$h$ of the corresponding tower.

In our setting, using the $\rauzy$ induction, we have built a sequence of three
Rokhlin towers $\cT_1^{(n)}$, $\cT_2^{(n)}$, $\cT_2^{(n)}$, associated to a
Bruin-Troubetzkoy ITM $T$, which partition the original interval $I$. Let their
name be:
\[
	u_n = \chi_{k_1}\cdots\chi_{k_n}(1), \quad
	v_n = \chi_{k_1}\cdots\chi_{k_n}(2), \quad \text{and} \quad
	w_n = \chi_{k_1}\cdots\chi_{k_n}(3).
\]
We observe that, using the notation of \Cref{sec:avilaforni}, we have that the
length $\abs{u_n}$ is equal to the height $h_1^{(n)}$ of the tower $\cT_1^{(n)}$,
and similarly for the others. Every function in $L^2(I)$ (for the Lebesgue
measure) is the limit of a sequence of functions $f_n$ such that $f_n$ is
constant on every level of every tower. The recursion formulas giving the name
of the towers translate into a construction of the towers by  \emph{cutting and
stacking}. 

In the following, in order to simplify the text, we will be slightly imprecise
and we will simply say ``the tower $u_n$'' meaning ``the tower with name
$u_n$''. We are now ready to prove the first result of this section.

\begin{proposition}\label{prop:measurable_eigenvalue}
	There exists a Bruin-Troubetzkoy ITM $T$ with a measurable eigenfunction
	with eigenvalue $-1$.
\end{proposition}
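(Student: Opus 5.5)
The plan is to follow the rank-one strategy of~\cite[Proposition~6.1]{FHZ}. We choose the sequence $(k_n)$ by hand, so that the three Rokhlin towers $u_n$, $v_n$, $w_n$ have heights whose parities are essentially even. On each tower we define $f_n$ to be the alternating sign $(-1)^i$ on the $i$-th level. We then show that $(f_n)$ converges in $L^2$ to a function $f$ with $f\circ T=-f$. Since $\abs{f_n}=1$, the limit is non-zero, hence non-constant. Any realizable sequence $(k_n)$ gives a BT ITM, so we are free to pick one.

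The first step is the combinatorics of the towers. From \cref{eq:BT_subs} we get the recursions
\[
u_{n+1}=v_n,\qquad v_{n+1}=w_n u_n^{k_{n+1}},\qquad w_{n+1}=w_n u_n^{k_{n+1}-1}.
\]
The heights therefore evolve by $(\abs{u},\abs{v},\abs{w})\mapsto(\abs{v},\abs{w}+k\abs{u},\abs{w}+(k-1)\abs{u})$. Reducing mod $2$ gives an invertible map on $(\ZZ/2)^3$, so the towers can never all have even height. The natural target is therefore to keep \emph{one} tower with odd height but negligible measure.

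I would choose $k_n$ in a periodic pattern of parities, for instance $k_n$ even at prescribed steps, and compute the orbit of the parity vector starting from $(1,1,1)$. The goal is that at the relevant times the long towers, those built from $w_n$ and copies of $u_n$, have a parity pattern in which the inserted copies of $u_n$ start at levels of a fixed parity.

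Next we compare $f_{n+1}$ with $f_n$. A level of tower $v_{n+1}$ or $w_{n+1}$ lies in one of the copies $w_n,u_n,\dots,u_n$. On that copy, $f_{n+1}$ equals $\pm f_n$. The sign is $+$ exactly when the copy starts at an even level, that is, when $\abs{w_n}$ and the partial sums $\abs{w_n}+j\abs{u_n}$ are even. So the set where $f_{n+1}\neq f_n$ is contained in the union of the copies starting at odd positions and the tower of odd height. Also $f_n(Tx)=-f_n(x)$ except on the top levels of the towers, whose total measure is at most $3\abs{I^{(n)}}\to0$.

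We will arrange the parities so that the only bad contribution is a tower whose measure is controlled by its base length times its height. Taking $k_{n}$ large at the appropriate steps makes the repeated copies of $u_n$ dominate, and this tower of odd height should then have measure at most $C/k_n$. Choosing $k_n$ growing fast enough that $\sum_n 1/k_n<\infty$ makes $\sum\norm{f_{n+1}-f_n}_2<\infty$. Then $f_n\to f$ in $L^2$, and the error in $f_n\circ T=-f_n$ tends to $0$, so $f\circ T=-f$.

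The main obstacle is the simultaneous bookkeeping of parities and masses. We must find a parity pattern for $(k_n)$, compatible with the invertibility of the mod-$2$ recursion, in which the odd-height tower is exactly the one whose mass we can make small. We must also check that the remaining towers remain of even height and have aligned copies. I would first try the pattern where $k_n$ is even for every $n$, work out the resulting $3$-periodic parity cycle, and then estimate the masses of the towers from the length recursion $\lambda=Z_3(k)\lambda'$ to see which tower can be made small by taking $k_n$ large. If this fails, I would allow alternating parities of $k_n$.
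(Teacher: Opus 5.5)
You have the right framework, and it is the paper's: rank-one towers $u_n,v_n,w_n$, an $f_n$ alternating in sign along levels, and summable mismatch between successive approximations. Your remark that the height map is invertible mod $2$ is also correct, so some tower always has odd height and must be made negligible. The gap is that the proposition is an existence statement, and its whole content is the explicit choice of $(k_n)$ together with the mass estimate. You leave both open.

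The pattern you propose to try first does not work. With all $k_n$ even, the parity vector $(\abs{u_n},\abs{v_n},\abs{w_n}) \bmod 2$ has period $7$, not $3$: $(1,1,1)\to(1,1,0)\to(1,0,1)\to(0,1,0)\to(1,0,0)\to(0,0,1)\to(0,1,1)\to(1,1,1)$. At the two steps where $(\abs{u_n},\abs{w_n})\equiv(0,1)$, every copy of $u_n$ in $v_{n+1}=w_nu_n^{k_{n+1}}$ and in $w_{n+1}=w_nu_n^{k_{n+1}-1}$ starts at an odd level. Hence $f_{n+1}=-f_n$ on the entire $u_n$-tower, and nothing in your plan makes that tower negligible at those times.

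More structurally, when $k_{n+1},k_{n+2}\ge 2$, you cannot have all copies of $u_n$ aligned at two consecutive steps, since that would force $\abs{u_n}$, $\abs{v_n}$, $\abs{w_n}$ all even. So the masses must be handled differently at alternate steps. Your estimate ``measure at most $C/k_n$'' with $\sum 1/k_n<\infty$ is not justified. What controls the masses is the relative size of consecutive $k_n$ and of the current heights, not the growth of the $k_n$ alone.

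The missing idea is to group the induction in pairs and make consecutive $k_n$ very unbalanced. The paper uses the two-step formulas $u_{2n+2}=w_{2n}u_{2n}^{k_{2n+1}}$, $v_{2n+2}=w_{2n}u_{2n}^{k_{2n+1}-1}v_{2n}^{k_{2n+2}}$, $w_{2n+2}=w_{2n}u_{2n}^{k_{2n+1}-1}v_{2n}^{k_{2n+2}-1}$. It takes $k_1$ odd, and $k_2$ even, which is what the stated parities at time $2$ require. It then takes every $k_{2n+2}$ odd, which propagates ``$\abs{u_{2n}},\abs{w_{2n}}$ even, $\abs{v_{2n}}$ odd''. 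Finally it chooses $k_{2n+1}$ so large that $(k_{2n+1}-1)\abs{u_{2n}}>2^n(\abs{w_{2n}}+k_{2n+2}\abs{v_{2n}})$.

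This gives three things at once. Copies of $u_{2n}$ fill all but a fraction $2^{-n}$ of each tower at time $2n+2$. They all start at the even levels $\abs{w_{2n}}+j\abs{u_{2n}}$. The towers $v_{2n}$ and $w_{2n}$ together have measure at most $2^{-n}$.

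With this choice, your own $f_n$ (alternating on every tower, compared at even times only) converges. It changes only on the misaligned copies of the odd-height tower $v_{2n}$. The paper's $f_n$ alternates on $u_{2n}$ and is $\equiv 1$ on $v_{2n}$ and $w_{2n}$; it differs from yours on a set of measure at most $2^{-n}$. Without such an explicit construction and its accompanying mass estimate, your proposal does not yet prove the statement.
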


\begin{proof}
Beginning with $u_0=1$, $v_0=2$ and $w_0=3$, the recursion formulas are as
follows
\begin{equation}\label{eq:recursion_n+1}
	\begin{split}
	u_{n+1} &=	v_n,\\
	v_{n+1} &=	w_n u_n^{k_{n+1}},\\
	w_{n+1} &=	w_n u_n^{k_{n+1}-1}. 		
	\end{split}
\end{equation}
Thus,
\[
	\begin{split}
		u_{2n+2} &= w_{2n} u_{2n}^{k_{2n+1}},\\
		v_{2n+2} &= w_{2n} u_{2n}^{k_{2n+1}-1} v_{2n}^{k_{2n+2}},\\
		w_{2n+2} &= w_{2n} u_{2n}^{k_{2n+1}-1} v_{2n}^{k_{2n+2}-1}.
	\end{split}
\]

For each $n\geq 1$ we choose first an odd $k_{2n+2}$, then $k_{2n+1}$ so large
that 
\[
	(k_{2n+1}-1) \abs{u_{2n}} > 2^n(\abs{w_{2n}}+ k_{2n+2}\abs{v_{2n}}).
\]
This condition implies that the towers $v_{2n}$ or $w_{2n}$ have relative
measure at most $2^{-n}$ in each of the towers $u_{2n+2}$, $v_{2n+2}$ or
$w_{2n+2}$. Hence, the tower $u_{2n}$ has measure at least $1-2^{-n}$. Finally,
for $n = 1$, we choose an odd $k_1$. Since
\[
	\abs{u_2} = k_1+1, \qquad \abs{v_2}= k_1+k_2, \qquad\text{and}\qquad \abs{w_2}= k_1+k_2-1,
\]
we get that $\abs{u_2}$ and $\abs{w_2}$ are even, and that $\abs{v_2}$ is odd. Then, the
recursion formulas and the parity of $k_{2n+2}$ ensure that, for all $n\geq 1$,
$\abs{u_{2n}}$ and $\abs{ w_{2n}}$ are even, and that $\abs{v_{2n}}$ is odd.

Let $f_n$ be the function defined to be equal to $+1$ on the base and all other
even levels of the tower $u_{2n}$, and on all the levels of the towers $w_{2n}$
or $v_{2n}$, and equal to $-1$ on the odd levels of the tower $u_{2n}$. By
construction, $f_n$ converge to a measurable $f$ which satisfies the conclusion
of the \namecref{prop:measurable_eigenvalue}.
\end{proof}

The same reasoning allows to build $T$ with any given rational eigenvalue. We
finish by proving \Cref{thm:irrational_eigenvalues}.

\begin{proof}[Proof of \Cref{thm:irrational_eigenvalues}]
Using equations~\eqref{eq:recursion_n+1}, we obtain
\[
	w_{3n+1} =	w_{3n}u_{3n}^{k_{3n+1}-1}.
\]
The same formulas, used repeatedly, yield the following recursion formulas for
the words of the towers:
\begin{equation}\label{eq:recursion_3n+3}
	\begin{split}
		u_{3n+3} &=	w_{3n+1}v_{3n}^{k_{3n+2}} = w_{3n}u_{3n}^{k_{3n+1}-1}v_{3n}^{k_{3n+2}},\\
		v_{3n+3} &=	w_{3n+1}v_{3n}^{k_{3n+2}-1}(w_{3n+1}u_{3n})^{k_{3n+3}},\\
		w_{3n+3} &=w_{3n+1}v_{3n}^{k_{3n+2}-1}(w_{3n+1}u_{3n})^{k_{3n+3}-1}.
	\end{split}
\end{equation}

We make the recursion hypothesis that $\abs{u_{3n}}$ and $\abs{v_{3n}}$ are coprime.
Suppose $\abs{u_{3n-3}}$ is congruent to $u$ modulo $\abs{u_{3n}}$; we first choose
$k_{3n+2} $ so that $\abs{u_{3n+3}}$ is congruent to $u$ modulo $\abs{u_{3n}}$. This
is equivalent to 
\[
	k_{3n+2} \abs{v_{3n}} + \abs{w_{3n}} \equiv u \mod{\abs{u_{3n}}},
\]
which can be obtained by an appropriate choice of $k_{3n+2}$, since $\abs{v_{3n}}$
is invertible modulo $\abs{u_{3n}}$. Thus, we have
\[
	\abs{u_{3n+3}}=y_{n+1}\abs{u_{3n} }+ \abs{u_{3n-3}}.
\]
Moreover, the above formulas imply that $y_{n+1}\geq k_{3n+1}-1$.

Having fixed $k_{3n+2}$, we require $k_{3n+1}$ to be so large that
\[
	(k_{3n+1}-1)\abs{u_{3n}} > 2^n(\abs{w_{3n}}+ k_{3n+2}\abs{v_{3n}}).
\]
This implies, regardless of the value of $k_{3n+3}$, that the towers $v_{3n}$ or
$w_{3n}$ have relative measure at most $2^{-n}$ in each of the towers
$u_{3n+3}$, $v_{3n+3}$ or $w_{3n+3}$. Hence, the tower $u_{3n}$ has measure  at
least $1-2^{-n}$. Indeed, the fact that 
\[
	\sum_{n=0}^{+\infty}\frac{\abs{w_{3n} }+k_{3n+2}\abs{v_{3n}}}{\abs{u_{3n+3}}}<+\infty
\]
ensures that $T$ is a \emph{rank one} system, generated by the towers $u_{3n}$.

We now choose $k_{3n+1}$ so that $\abs{w_{3n+1}}$ and $\abs{v_{3n}}$ are coprime. This
can be achieved by a suitable choice of $k_{3n+1}$, as $\abs{u_{3n}}$ is invertible
modulo $\abs{v_{3n}}$. We remark that this choice is compatible with the required
large value of $k_{3n+1}$.

Finally, we need to ensure that $\abs{u_{3n+3}}$ and $\abs{v_{3n+3}}$ are coprime. For
this, we choose $k_{3n+3} = \abs{w_{3n+1}v_{3n}^{k_{3n+2}}}$. Then, if $x$ is a
common divisor of $\abs{u_{3n+3}}$ and $\abs{v_{3n+3}}$, by
equations~\eqref{eq:recursion_3n+3}, $x$ also divides
\[
	\abs{w_{3n+1}v_{3n}^{k_{3n+2}}} \quad \text{and} \quad \abs{w_{3n+1}v_{3n}^{k_{3n+2}-1}(w_{3n+1}u_{3n})^{k_{3n+3}}}.
	\]
In turn, this implies that $x$ divides $\abs{w_{3n+1}v_{3n}^{k_{3n+2}}}$ and
$\abs{w_{3n+1}v_{3n}^{k_{3n+2}-1}}$. Then, it divides $\abs{w_{3n+1}v_{3n}^{k_{3n+2}}}$
and $\abs{v_{3n}}$. Finally, since $x$ divides $\abs{v_{3n}}$ and $\abs{w_{3n+1}}$, $x=1$,
as we wanted.

In order to construct the measure-theoretic isomorphism as in the statement, we
will first construct two other rank one systems $T_1$, and $T_2$, isomorphic to
$T$, and then show that $T_2$ is isomorphic to an irrational rotation $R$.

To construct $T_1$, we begin by noticing that, in the word $u_{3n+3}$ there are
concatenations of $u_{3n}$, and that they start $\abs{w_{3n}}$ letters after the
beginning of $u_{3n+3}$. Hence, if we define $m_{n+1} = \abs{
w_{3n}}+\abs{w_{3n-3}}+\cdots +\abs{w_{0}}$, our estimates also imply that
\[
	\sum_{n=0}^{+\infty}\frac{m_{n+1}}{\abs{u_{3n+3}}}<+\infty.
\]
Consider the Rokhlin towers with name $z_{n}$ defined by recursion formulas
\[
	z_{n+1}=z_{n}^{k_{3n+1}-1} s_{n},
\]
where $s_{n}$ is a string of $\abs{w_{3n}} + k_{3n+2} \abs{v_{3n}}$ repetitions of the 
letter $s$, for $s\notin \{1,2,3\}$. The letter $s$ is usually called a 
\emph{spacer}. We remark that, by definition, the height of the towers $z_n$ 
satisfies $\abs{z_{n}} = \abs{u_{3n}}$. We define $T_1$ to be the rank one system 
generated by the Rokhlin towers $z_{n}$. 

By sending  the level $m_{n+1}+i$  of the  tower $u_{3n+3}$ for $T$ to the level
$i$ of the tower $z_{n+1}$ for $T_1$, for $i$ from $0$ to $\abs{u_{3n+3}} -
m_{n+1}-1$, we get a measure-theoretic isomorphism between $T$ and $T_1$, since
these subsets of the towers fill a large enough part of the space.

Now, we construct $T_2$. Let $b_n$ be the Rokhlin towers defined by recursion
formulas
\[
	b_{n+1}=b_n^{y_{n+1}}s'_{n},
\]
where $s'_{n}$ is a string of $\abs{u_{3n-3}}$ spacers. Then, we let $T_2$ be the
rank one system generated by Rokhlin towers $b_n$. By sending the level $i$ of
the tower $z_{n+1}$ for $T_1$ to the level $i$ of the tower $b_{n+1}$ for $T_2$,
for $i$ from $0$ to $(k_{3n+1}-1)\abs{z_n}$, we get a measure-theoretic isomorphism
between $T_1$ and $T_2$, as claimed.

To conclude, let $\gamma$ be the irrational number whose Euclidian continued
fraction expansion is given by $[0;y_1, y_2,...]$. We remark that the
denominators of its convergents are given by $q_n = \abs{u_{3n}}$.  
By construction, the rotation $R = R_\gamma$ of angle $\gamma$ on the $1$-torus
can be generated by two families of Rokhlin towers: $c_n$ and $d_n$, given by
the recursion formulas
\begin{align*}
	c_{n+1} &=	c_n^{y_{n+1}} d_n,\\
	d_{n+1} &=	c_n.
\end{align*}
Since
\[
	\sum_{n=0}^{+\infty}\frac{\abs{u_{3n}}}{\abs{u_{3n+3}}}<+\infty,
\]
the $y_n$ are so large that $R$ is a rank one system generated by the towers
$c_n$.  Finally, we build a measure-theoretic isomorphism between $R$ and $T_2$
by sending each level of the tower $b_n$ for $T_2$ to the same level of the
tower $X_n$ for $R$. Hence, the theorem is proved.
\end{proof}

We note that, in the construction of \Cref{thm:irrational_eigenvalues}, we can
build an eigenfunction of $T$ for $R_\gamma$ as in
\Cref{prop:measurable_eigenvalue}:  
we define $f_n$ to be $e^{2\pi j\gamma}$ on the level $(m_{n}+j)$ of the tower
$u_{3n}$, and $+1$ on all other levels of the towers $u_{3n}$, $v_{3n}$ or
$w_{3n}$. By construction, $f_n$ is close to $f_{n-1}$ on the levels of $u_{3n}$
higher that $m_{n}$, which make a set of measure at least $1-2^{-n+2}$. Thus,
the $f_n$ converge to a measurable $f$ which is an eigenfunction for $R_\gamma$.

\printbibliography
\end{document}

\typeout{get arXiv to do 4 passes: Label(s) may have changed. Rerun}

%% file: Towers.pdf_tex
%% Creator: Inkscape 1.4.3 (1:1.4.3+202512261035+0d15f75042), www.inkscape.org
%% PDF/EPS/PS + LaTeX output extension by Johan Engelen, 2010
%% Accompanies image file 'Towers.pdf' (pdf, eps, ps)
%%
%% To include the image in your LaTeX document, write
%%   \input{<filename>.pdf_tex}
%%  instead of
%%   \includegraphics{<filename>.pdf}
%% To scale the image, write
%%   \def\svgwidth{<desired width>}
%%   \input{<filename>.pdf_tex}
%%  instead of
%%   \includegraphics[width=<desired width>]{<filename>.pdf}
%%
%% Images with a different path to the parent latex file can
%% be accessed with the `import' package (which may need to be
%% installed) using
%%   \usepackage{import}
%% in the preamble, and then including the image with
%%   \import{<path to file>}{<filename>.pdf_tex}
%% Alternatively, one can specify
%%   \graphicspath{{<path to file>/}}
%% 
%% For more information, please see info/svg-inkscape on CTAN:
%%   http://tug.ctan.org/tex-archive/info/svg-inkscape
%%
\begingroup%
  \makeatletter%
  \providecommand\color[2][]{%
    \errmessage{(Inkscape) Color is used for the text in Inkscape, but the package 'color.sty' is not loaded}%
    \renewcommand\color[2][]{}%
  }%
  \providecommand\transparent[1]{%
    \errmessage{(Inkscape) Transparency is used (non-zero) for the text in Inkscape, but the package 'transparent.sty' is not loaded}%
    \renewcommand\transparent[1]{}%
  }%
  \providecommand\rotatebox[2]{#2}%
  \newcommand*\fsize{\dimexpr\f@size pt\relax}%
  \newcommand*\lineheight[1]{\fontsize{\fsize}{#1\fsize}\selectfont}%
  \ifx\svgwidth\undefined%
    \setlength{\unitlength}{284.59990013bp}%
    \ifx\svgscale\undefined%
      \relax%
    \else%
      \setlength{\unitlength}{\unitlength * \real{\svgscale}}%
    \fi%
  \else%
    \setlength{\unitlength}{\svgwidth}%
  \fi%
  \global\let\svgwidth\undefined%
  \global\let\svgscale\undefined%
  \makeatother%
  \begin{picture}(1,0.3089073)%
    \lineheight{1}%
    \setlength\tabcolsep{0pt}%
    \put(0,0){\includegraphics[width=\unitlength,page=1]{Towers.pdf}}%
    \put(0.23647214,0.00555161){\makebox(0,0)[t]{\lineheight{1.25}\smash{\begin{tabular}[t]{c}$I^{(n)}_1$\end{tabular}}}}%
    \put(0.44202387,0.00555161){\makebox(0,0)[t]{\lineheight{1.25}\smash{\begin{tabular}[t]{c}$I^{(n)}_2$\end{tabular}}}}%
    \put(0.70028103,0.00555161){\makebox(0,0)[t]{\lineheight{1.25}\smash{\begin{tabular}[t]{c}$I^{(n)}_3$\end{tabular}}}}%
    \put(0.18366089,0.2055167){\makebox(0,0)[t]{\lineheight{1.25}\smash{\begin{tabular}[t]{c}$\cT^{(n)}_1$\end{tabular}}}}%
    \put(0.42083622,0.28223838){\makebox(0,0)[t]{\lineheight{1.25}\smash{\begin{tabular}[t]{c}$\cT^{(n)}_2$\end{tabular}}}}%
    \put(0.61057592,0.15047444){\makebox(0,0)[t]{\lineheight{1.25}\smash{\begin{tabular}[t]{c}$\cT^{(n)}_3$\end{tabular}}}}%
    \put(0,0){\includegraphics[width=\unitlength,page=2]{Towers.pdf}}%
    \put(0.09142643,0.11092771){\makebox(0,0)[t]{\lineheight{1.25}\smash{\begin{tabular}[t]{c}$h^{(n)}_1$\end{tabular}}}}%
    \put(0,0){\includegraphics[width=\unitlength,page=3]{Towers.pdf}}%
    \put(0.56314125,0.23215056){\makebox(0,0)[t]{\lineheight{1.25}\smash{\begin{tabular}[t]{c}$h^{(n)}_2$\end{tabular}}}}%
    \put(0,0){\includegraphics[width=\unitlength,page=4]{Towers.pdf}}%
    \put(0.9083628,0.0845749){\makebox(0,0)[t]{\lineheight{1.25}\smash{\begin{tabular}[t]{c}$h^{(n)}_3$\end{tabular}}}}%
  \end{picture}%
\endgroup%

%% file: Balanced_towers.pdf_tex
%% Creator: Inkscape 1.4.3 (1:1.4.3+202512261035+0d15f75042), www.inkscape.org
%% PDF/EPS/PS + LaTeX output extension by Johan Engelen, 2010
%% Accompanies image file 'Balanced_towers.pdf' (pdf, eps, ps)
%%
%% To include the image in your LaTeX document, write
%%   \input{<filename>.pdf_tex}
%%  instead of
%%   \includegraphics{<filename>.pdf}
%% To scale the image, write
%%   \def\svgwidth{<desired width>}
%%   \input{<filename>.pdf_tex}
%%  instead of
%%   \includegraphics[width=<desired width>]{<filename>.pdf}
%%
%% Images with a different path to the parent latex file can
%% be accessed with the `import' package (which may need to be
%% installed) using
%%   \usepackage{import}
%% in the preamble, and then including the image with
%%   \import{<path to file>}{<filename>.pdf_tex}
%% Alternatively, one can specify
%%   \graphicspath{{<path to file>/}}
%% 
%% For more information, please see info/svg-inkscape on CTAN:
%%   http://tug.ctan.org/tex-archive/info/svg-inkscape
%%
\begingroup%
  \makeatletter%
  \providecommand\color[2][]{%
    \errmessage{(Inkscape) Color is used for the text in Inkscape, but the package 'color.sty' is not loaded}%
    \renewcommand\color[2][]{}%
  }%
  \providecommand\transparent[1]{%
    \errmessage{(Inkscape) Transparency is used (non-zero) for the text in Inkscape, but the package 'transparent.sty' is not loaded}%
    \renewcommand\transparent[1]{}%
  }%
  \providecommand\rotatebox[2]{#2}%
  \newcommand*\fsize{\dimexpr\f@size pt\relax}%
  \newcommand*\lineheight[1]{\fontsize{\fsize}{#1\fsize}\selectfont}%
  \ifx\svgwidth\undefined%
    \setlength{\unitlength}{195.75117216bp}%
    \ifx\svgscale\undefined%
      \relax%
    \else%
      \setlength{\unitlength}{\unitlength * \real{\svgscale}}%
    \fi%
  \else%
    \setlength{\unitlength}{\svgwidth}%
  \fi%
  \global\let\svgwidth\undefined%
  \global\let\svgscale\undefined%
  \makeatother%
  \begin{picture}(1,0.37402069)%
    \lineheight{1}%
    \setlength\tabcolsep{0pt}%
    \put(0,0){\includegraphics[width=\unitlength,page=1]{Balanced_towers.pdf}}%
    \put(0.38247838,0.00725396){\makebox(0,0)[t]{\lineheight{1.25}\smash{\begin{tabular}[t]{c}$x$\end{tabular}}}}%
    \put(0.49218401,0.00807141){\makebox(0,0)[t]{\lineheight{1.25}\smash{\begin{tabular}[t]{c}$T^{h^{(n)}_2}(x)$\end{tabular}}}}%
  \end{picture}%
\endgroup%